\documentclass{amsart}

\usepackage[margin=1.10in]{geometry}
\usepackage{microtype}
\usepackage{hyperref}
\usepackage{amssymb}
\usepackage{mathtools}
\usepackage{aliascnt}

\usepackage[nameinlink]{cleveref}

\usepackage[style=alphabetic,maxalphanames=99,maxbibnames=99,url=false,isbn=false]{biblatex}
\newtheorem{thm}{Theorem}[section]
\newaliascnt{cor}{thm}
\newtheorem{cor}[cor]{Corollary}
\aliascntresetthe{cor}
\newaliascnt{lem}{thm}
\newtheorem{lem}[lem]{Lemma}
\aliascntresetthe{lem}
\newaliascnt{prop}{thm}
\newtheorem{prop}[prop]{Proposition}
\aliascntresetthe{prop}
\newaliascnt{conj}{thm}
\newtheorem{conj}[conj]{Conjecture}
\aliascntresetthe{conj}

\newtheorem*{thmA}{Theorem A}
\newtheorem*{thmB}{Theorem B}
\newtheorem*{thmC}{Theorem C}
\newtheorem*{thmD}{Theorem D}
\newtheorem*{thmE}{Theorem E}
\newtheorem*{conjF}{Conjecture F}

\theoremstyle{definition}
\newaliascnt{defn}{thm}
\newtheorem{defn}[defn]{Definition}
\aliascntresetthe{defn}
\newaliascnt{remark}{thm}
\newtheorem{remark}[remark]{Remark}
\aliascntresetthe{remark}

\crefalias{thm}{theorem}
\crefalias{cor}{corollary}
\crefalias{lem}{lemma}
\crefalias{prop}{proposition}
\crefalias{defn}{definition}
\crefalias{eg}{example}
\crefalias{qn}{question}
\crefalias{claim}{claim}
\crefalias{remark}{remark}
\crefalias{conj}{conjecture}

\crefname{question}{question}{questions}
\Crefname{question}{Question}{Questions}
\crefname{claim}{claim}{claims}
\Crefname{claim}{Claim}{Claims}
\crefname{remark}{remark}{remarks}
\Crefname{remark}{Remark}{Remarks}
\crefname{conjecture}{conjecture}{conjectures}
\Crefname{conjecture}{Conjecture}{Conjectures}

\newcommand{\N}{\mathbb{N}}

\newcommand{\ve}{\varepsilon}
\newcommand{\Z}{\mathbb{Z}}

\newcommand{\symdiff}{\mathbin{\triangle}}

\DeclareMathOperator{\asdim}{asdim}
\DeclareMathOperator{\sdim}{sdim}
\DeclareMathOperator{\udim}{udim}
\DeclareMathOperator{\diam}{diam}

\DeclareMathOperator{\supp}{supp}
\DeclareMathOperator{\vol}{vol}

\newcommand{\trivial}[1]{}
\newcommand{\define}[1]{\textbf{#1}} %definitions in bold
\renewcommand{\subset}{\subseteq}

\newcommand{\calA}{\mathcal{A}}
\newcommand{\calB}{\mathcal{B}}
\newcommand{\calC}{\mathcal{C}}
\newcommand{\calF}{\mathcal{F}}
\newcommand{\calU}{\mathcal{U}}
\newcommand{\calV}{\mathcal{V}}
\newcommand{\calW}{\mathcal{W}}

\newcommand{\actson}{\curvearrowright}

\newcommand{\powset}{\mathcal{P}} %powerset
\DeclareMathOperator{\asi}{asi} 
\DeclareMathOperator{\si}{si}

\newcommand{\union}{\cup}
\newcommand{\bigunion}{\bigcup}
\newcommand{\inters}{\cap}
\newcommand{\bndry}{\partial}

\title{Borel dimension growth and hyperfiniteness}

\author{Jan Grebík}
\address{Computer Science Institute, Charles University, Malostranské nám. 25, 118 00 Praha 1, Czechia}
\email{grebikj@gmail.com}
\thanks{JG and VR were supported by the Czech Science Foundation (GA \v{C}R), project No. 26-23599M}

\author{Andrew S. Marks}
\address{Department of Mathematics, UC Berkeley, 970 Evans Hall MC 3840, Berkeley CA 94720}
\email{marks@math.berkeley.edu}
\thanks{AM was supported in part by NSF grant DMS-2348788}

\author{Václav Rozhoň}
\address{Computer Science Institute, Charles University, Malostranské nám. 25, 118 00 Praha 1, Czechia}
\email{vaclav.rozhon@matfyz.cuni.cz}

\author{Forte Shinko}
\address{Department of Mathematics, UC San Diego, 9500 Gilman Drive, La Jolla, CA 92093}
\email{fshinko@ucsd.edu}

\date{\today}
\subjclass[2020]{03E15, 05C63, 20F69, 43A07, 51F30}

\begin{document}

\begin{abstract}
  We show that every increasing union of Borel graphs of pointwise volume growth
  at most $\exp(O(r^\gamma))$ is hyperfinite, where $\gamma \approx 0.1523$ is
  the unique real root of $(1 - \gamma)^3 - 4\gamma = 0$. This implies a
  positive answer to the special case of Weiss's question on the hyperfiniteness
  of Borel actions of countable amenable groups, for countable amenable groups
  locally of volume growth $\exp(O(r^\gamma))$ for $\gamma$ as above. 
  % making partial progress on the question of Conley, Jackson, Marks, Seward,
  % and Tucker-Drob of whether every Borel graph of subexponential growth is
  % hyperfinite.
  We also show that every bounded degree Borel graph of subexponential volume growth
  has Borel F{\o}lner tilings, improving a result of Downarowicz and Zhang for
  graphs generated by free Borel actions of groups of subexponential volume
  growth. Our main tools are the development of a Borel analogue of the
  two-parameter dimension growth function of a metric space as introduced by
  Dranishnikov and Sapir, and ball carving algorithms from theoretical computer
  science. We end with a pair of conjectures relating Borel amenability, Borel
  subexponential dimension growth, and hyperfiniteness, which would imply a
  positive answer to Weiss's question.
\end{abstract}

\maketitle

\section{Introduction}

This paper is a contribution to the study of amenability, hyperfiniteness,
and almost finiteness of countable Borel equivalence relations. The study
of Borel reducibility of equivalence relations arose in descriptive set theory
as a way of studying the relative difficulty of classification problems in
mathematics. By the Glimm-Effros dichotomy of Harrington, Kechris, and
Louveau \cites{HKL}, the simplest nontrivial class of
Borel equivalence relations is the class of hyperfinite Borel equivalence
relations. However, many fundamental questions about hyperfinite Borel
equivalence relations remain open. One important open problem is Weiss's
question from 1984 of whether every Borel action of a countable amenable
group generates a hyperfinite orbit equivalence relation \cite{Weiss}. That is,
does group theoretic tameness exactly correspond to descriptive set
theoretic-simplicity? 

Regular progress has been made on Weiss's problem over the last several decades
for various classes of countable amenable groups: the groups $\Z^n$ (Weiss,
unpublished), finitely generated groups of polynomial volume growth
\cite{JKL}, abelian groups \cite{GJ}, locally nilpotent groups \cite{SS}, and
polycyclic groups \cite{CJMSTD}.

The work in the papers \cite{GJ} and \cite{SS} was quite technical
and required a great deal of information about the internal
geometry of the groups in question. This style of attack seemed ill suited to
understanding Weiss's problem in general, since we have a very poor
understanding of the geometry of arbitrary amenable groups.
In the past few years, some effort has been made to use softer methods
for approaching this problem. An advance here was made in
the paper \cite{CJMSTD} which introduced tools from the study of asymptotic
dimension to attack the problem, and greatly simplified those earlier
results at the same time as generalizing them. 

Another line of progress has come from trying to generalize these
techniques for proving hyperfiniteness of group actions to proving hyperfiniteness
of Borel graphs with various growth bounds or amenability properties. This
immediately forces one to confront the problem discussed above since
arbitrary graphs have no nice or ``regular'' geometry. 
In \cite[page 879]{CJMSTD20} it is mentioned as an open question whether every
Borel graph of uniformly subexponential growth is hyperfinite, and 
\cite[Problem 3.5]{M} asked whether every Borel graph of
uniformly bounded polynomial growth is hyperfinite. 
We note here that Tessera
\cite[Proposition 3.5]{T} showed in 2008 that every Borel graph of uniformly
subexponential growth on a standard
probability space $(X,\mu)$ is amenable and hence
hyperfinite modulo a $\mu$-null set by the Connes-Feldman-Weiss theorem
\cite{CFW} (see also \cite[Theorem 18]{Kast25}). So the measure-theoretic analogue
of the above problem on subexponential growth has a positive answer, analogously
to how the Ornstein-Weiss theorem \cite{OW} gives a positive measure-theoretic answer to
Weiss's problem.

% We quote from the paper \cite{M}:
% ``Finding techniques for resolving Problem 3.5 [whether Borel graphs of
% polynomial volume growth are hyperfinite] where there is far less regular
% geometric structure would be one way of making progress towards resolving
% Weiss's question in general since we know little about any regular geometric
% structure in arbitrary amenable groups''.

Recently, Bernshteyn and Yu \cite{BY} made a significant advance here,
showing that every Borel graph of polynomial growth is hyperfinite. Their result
uses techniques from local algorithms in theoretical computer science and
a Borel version of the Lov\'asz Local Lemma to do a randomized ball carving
construction to find a type of ``padded decomposition'' of the space, witnessing
that it has finite Borel asymptotic dimension. Hyperfiniteness then follows
from \cite{CJMSTD}. This use of 
the theory of distributed algorithms 
in descriptive set theory is one of many such fruitful connections that
have emerged in the past few years between these fields
growing out of the work in \cite{Ber23}.

Our main result is a generalization of the result of Bernshteyn and Yu,
showing that increasing unions of Borel graphs of sufficiently slow intermediate
growth are hyperfinite:

\begin{thmA}
Let $\gamma$ be the unique real root of $(1 - \gamma)^3 - 4
\gamma = 0$, so $\gamma \approx 0.1523$.
Suppose $X$ is a standard Borel space and
$\rho_0 \geq \rho_1 \geq \ldots$ is a decreasing
sequence of Borel extended metrics on $X$ so that for every
$n$, $\rho_n$ has pointwise 
volume growth $\exp(O(r^{\gamma}))$. That is, for every $n \in \N$ and $x \in
X$, there exist $C,N > 0$ such that $|B^{\rho_n}_r(x)| \leq
\exp(C r^\gamma)$ for all $r \geq N$. Then $\bigunion_n E_{\rho_n}$ is
hyperfinite, where $E_{\rho_n}$ is the finite distance equivalence relation for
$\rho_n$.
\end{thmA}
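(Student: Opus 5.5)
Theorem A reduces to a structural statement about a single Borel extended metric with slow growth, combined with a hyperfiniteness criterion for increasing unions. The route is to develop a Borel analogue of the Dranishnikov--Sapir dimension growth function. For a Borel extended metric $\rho$ on $X$ and scale $r$, let $\mathrm{ad}_\rho(r)$ be the least $d$ such that $X$ admits a Borel cover by $d+1$ families. Each family should consist of $r$-disjoint sets, and each set should be $\rho$-bounded, with diameter allowed to depend on $r$. The key criterion we aim for has the spirit of the \cite{CJMSTD} argument that finite Borel asymptotic dimension implies hyperfiniteness. Suppose that for a decreasing sequence $\rho_n$, each $\rho_n$ has Borel dimension growth that is \emph{subexponential in a quantified sense}. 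Concretely, at scale $r$ we want covers with $d(r)$ colors and uniformly bounded diameter $D(r)$, and we need $d(r)$ small compared to the available room, e.g.\ $\sum_k d(r_k)/r_k$-type summability along a rapidly increasing sequence of scales. Then $\bigcup_n E_{\rho_n}$ is hyperfinite. The proof of this criterion builds, inductively in $n$ and along scales, finite Borel equivalence relations $F_n$. Each $F_n$ is obtained from the colored cover by absorbing boundaries: a point whose $r$-neighborhood meets the boundary of its current piece is reassigned using the next color. The number of colors controls how many points are ``lost'', and a union bound shows the limiting relation is $\bigcup_n E_{\rho_n}$, using the \cite{CJMSTD} trick of handling each metric at successively larger scales.

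The second step is to produce such covers from the growth hypothesis via Borel ball carving, following Bernshteyn--Yu \cite{BY}. Fix $\rho=\rho_n$ and a scale $r$. Pick a Borel maximal $R$-separated set of centers, which exists by a Borel maximal independent set in a locally finite Borel graph. Then give each center a random radius from $[R, 2R]$ and carve balls in a Borel-coloring order. Growth $|B_r(x)|\le \exp(Cr^\gamma)$ bounds the probability that a point lies within distance $r$ of a carved boundary. This probability is roughly $r\cdot\log|B_{2R}|/R \approx r R^{\gamma-1}$, by the standard exponential-radius calculation. The Borel local lemma of \cite{BY}, or an iterated deterministic derandomization, then makes the bad event fail everywhere. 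Because the growth bound holds only pointwise for $r\ge N(x)$, we first partition $X$ into Borel pieces according to $(C,N)$ and handle them by exhaustion.

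The main obstacle is quantitative, and it is where $\gamma$ comes from. Three scales interact: the carving radius $R$, the separation/Lovász-local-lemma neighborhood size governed by $|B_{cR}|=\exp(O(R^\gamma))$, and the number of iterations $d(r)$ of carving. The boundary-failure probability must beat the dependency degree $\exp(O(R^\gamma))$. This forces iterating the construction and re-carving the leftover set at larger scales. Each level costs another power, so the balance conditions become polynomial equations in the exponents. I expect that optimizing three nested exponents gives $(1-\gamma)^3=4\gamma$, so the plan is to track exponents $R_1=r^{a}$, $R_2=R_1^{b}$, $R_3=R_2^{c}$ and choose them to satisfy these constraints. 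The first attempt will be a three-round carving with exponents chosen to equalize the losses. Verifying that the resulting $d(r)$ meets the summability criterion of step one is the delicate computation.
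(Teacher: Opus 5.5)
Your top-level architecture (a Borel dimension-growth criterion for increasing unions, a ball-carving bound on that dimension, and a reduction from pointwise to uniform growth) matches the paper's. However, the first and third components would not go through as you describe them.

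In the criterion, a ``union bound'' is meaningless in the Borel setting. There is no measure, and what you need is that for each single pair $x \mathrel{E_{\rho_m}} x'$ only finitely many stages separate $x$ from $x'$. The paper gets this from a combinatorial potential argument (Lemma~\ref{hyperfinite-construction}). At stage $n$ it builds a matrix of covers whose successive rows are obtained by thickening by $d'_{n-1}$ and starring with the stage-$(n-1)$ covers. So a close pair is divided at most once per column, and every division at stage $n$ injects into a non-final-row division at stage $n-1$. Hence the number of divisions drops whenever the final row divides the pair. This forces a coupling of three consecutive scales, $24\dim_B^{\rho_m}(s_{n+1},d_{n+1})\,d_{n-1}\le s_n\le d_n$: the separation at scale $n$ must pay for (colors at scale $n+1$)$\times$(diameter at scale $n-1$).

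A summability condition on $d(r_k)/r_k$ that ignores the diameters cannot be what this method yields. It would make sublinear Borel dimension growth imply hyperfiniteness, which is open (the paper notes it would already substantially improve Theorem A). Remark~\ref{rem:ab-leq-1/4} shows the method stops exactly at $4ab\le 1$ when $\dim_B(s,Cs^a)\approx s^b$.

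That is also the true source of $\gamma$, not three nested carving rounds. The $4$ comes from AM--GM applied to $a\log s_{n-1}+b\log s_{n+1}\le \log s_n+O(1)$. The deterministic F{\o}lner-layering carving (no local lemma needed) gives $a=1/(1-\gamma)$ and $b=\gamma/(1-\gamma)^2$. With $d\approx s^{1/(1-\gamma)}$, each layering removal shrinks the volume function by a constant factor $\delta<1$, up to an $O(d)$ shift in radius. So $c$ removals suffice once $\delta^c e^{O((dc)^\gamma)}<1$, i.e.\ $c\approx d^{\gamma/(1-\gamma)}$. Thus $ab\le 1/4$ is exactly $4\gamma\le(1-\gamma)^3$. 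A working randomized carving would give the better $b=\gamma/(1-\gamma)$ and a larger exponent. So your expectation that it lands on the cubic shows the exponent bookkeeping has not actually been done.

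The pointwise-to-uniform step also fails as stated. Partitioning $X$ into Borel pieces by $(C,N)$ and ``exhausting'' is not available in the Borel category. The pieces are not invariant, so covers built on different pieces need not be separated from one another, and countably many pieces would need unboundedly many colors. Indeed, $\dim_B^{\rho_n}(s,d)$ can be infinite for every $d$ under a merely pointwise bound. Deducing hyperfiniteness of $E_{\rho_n}$ from that of the increasing restrictions $E_{\rho_n}\restriction A_k$ is an instance of the open union problem for hyperfinite CBERs, not something exhaustion gives you.

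The paper instead folds the pieces into the metric sequence. Let $A_{m,n}=\{x : |B^{\rho_m}_r(x)|\le \exp(nr^\gamma) \text{ for all } r\ge n\}$. Define $\rho'_n$ as the infimum of lengths of walks whose steps are $\rho_{m_i}$-steps, with $m_i\le n$, between points of $A_{m_i,n}$. Collapsing segments between repeated maximal labels bounds the number of steps by $S(n)=2S(n-1)+1$. Hence $|B^{\rho'_n}_r(x)|\le((n+1)e^{nr^\gamma})^{S(n)}$ uniformly for $r\ge n$. The $\rho'_n$ decrease and $\bigcup_n E_{\rho'_n}=\bigcup_n E_{\rho_n}$, so the uniform result applies directly.
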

 
Since all the tools in our proofs are from dimension theory and metric geometry, we
have stated our results in general for Borel extended metric spaces. One important example of such a
decreasing sequence of Borel extended metrics $\rho_0 \geq \rho_1 \geq \ldots$ is where $G_0 \subset G_1 \subset \ldots$
is an increasing sequence of locally countable Borel graphs on a standard Borel
space $X$, and $\rho_n$ is the
path length metric on $G_n$. Hence, by applying Theorem A to these graph
metrics, the theorem shows that 
an increasing union of Borel graphs, each of pointwise volume growth
$\exp(O(r^{\gamma}))$, is hyperfinite. 

Another important example of a locally finite Borel extended metric space arises
when $\Gamma \actson X$ is a Borel action of a
countable group $\Gamma$ on a standard Borel space $X$. If $\Gamma$ has a finite symmetric generating set $S$, then 
the word metric $\rho_S$ on the action is a Borel extended metric, where $\rho_S(x,x')$ is the least
length of a word $w$ in the generators $S$ so that $w \cdot x = x'$. The 
finite distance equivalence relation for $\rho_S$ is the orbit equivalence relation of the action
$\Gamma \actson X$. More
generally, if $\Gamma$ is not finitely generated, let   
 $S_0 \subset S_1 \subset \ldots \subset \Gamma$ be a sequence of finite symmetric sets
 so that $\bigunion_n S_n$ generates $\Gamma$. Then the associated locally
 finite word metrics form a decreasing sequence $\rho_{S_0} \geq \rho_{S_1} \geq
 \ldots$.
By applying Theorem
A to such metrics, this implies that Weiss's
problem has a positive answer for any countable group whose finitely generated subgroups all have
volume growth $\exp(O(r^{\gamma}))$. 
However, we note that the existence of groups of that growth which are not
of locally polynomial growth (and hence already known to have hyperfinite
actions by \cite{SS}) is an open problem.
In particular, Grigorchuk's gap conjecture
\cite{Grig91}
states that any group of growth slower than  
$\exp(r^{0.5})$ has polynomial growth. The group with the currently known slowest
volume growth is the first Grigorchuk group, for which
Erschler and Zheng proved that its volume growth is $\exp(r^{\alpha+o(1)})$
where $\alpha \approx 0.7674$ \cite{EZ}.
Thus, it remains an interesting open
problem to increase the exponent $\gamma$ in Theorem A. 
However, we do note that there are many interesting Borel graphs and non-free
Borel actions of countable groups of intermediate volume growth bounded by
$\exp(r^{\alpha})$ for $\alpha \leq \gamma$, which are therefore hyperfinite by
Theorem A.
See, for example,
\cite{KW,BCSTDN}.

A problem related to hyperfiniteness has been the study of F{\o}lner tilings or
almost finiteness \cite{E18} of graphs and metric spaces. Recall that a
finite set $A
\subset X$ in an
extended metric space $(X,\rho)$ is $(r,\ve)$-F{\o}lner if $|B_r(A)| \leq (1 +
\ve)|A|$, where $B_r(A)$ is the closed ball of radius $r$ around $A$. An
$(r,\ve)$-F{\o}lner tiling of $X$ is a partition of the space $X$ into $(r,\ve)$-F{\o}lner
sets of uniformly bounded diameter. Recently Downarowicz, Huczek, and Zhang showed the existence of such
tilings of finitely generated amenable groups \cite{DHZ}, and \cite{CJKMSTD}
studied measurable tilings of free actions of amenable groups. 
Using our techniques we also 
generalize a theorem of Downarowicz and Zhang \cite{DZ} (see also \cite{BBW}), who showed that every
free Borel action of a group of subexponential growth has a Borel F{\o}lner
tiling\footnote{The Borel result follows from their topological result for free continuous actions
on zero-dimensional spaces since given a Borel action of
a countable group on a Polish space $X$, one can change the topology of $X$ to a finer zero-dimensional Polish topology with the same
collection of Borel sets, where this action is continuous. See 
\cite[Exercise 13.5]{K95} and the discussion in \cite{BBW}}.
Our result generalizes this theorem both to non-free actions of groups of
subexponential growth and, more
generally, to extended metric spaces of subexponential growth:

\begin{thmB}
  Suppose $(X,\rho)$ is a Borel extended metric space of uniformly subexponential volume growth $\vol(r) \in e^{o(r)}$. Then for every $r,\ve > 0$, there is a Borel $(r,\ve)$-F{\o}lner tiling of $(X,\rho)$.
\end{thmB}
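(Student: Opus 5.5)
The plan is to build the tiling by a Borel ball carving procedure, with a bounded number of carving rounds. The growth hypothesis $\vol(r)\in e^{o(r)}$ will guarantee that in every round, every point sees a radius at which its ball has a Følner boundary.

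\textbf{Key pigeonhole fact.} Fix $r,\ve>0$. For any finite set $A$ and any $k$, the sets $B_{jr}(A)$ for $j=0,\dots,k$ are nested, and $|B_{kr}(A)|$ is at most $|A|\cdot\vol(kr)$. Hence if $|B_{(j+1)r}(A)|>(1+\ve)|B_{jr}(A)|$ for every $j<k$, then $(1+\ve)^k<\vol(kr)$. Since $\log\vol(kr)=o(k)$, this fails once $k\ge K(r,\ve)$ for a suitable $K$. So among any $K$ consecutive $r$-thickenings of any set, at least one is $(r,\ve)$-Følner. Uniformity of the volume bound makes $K$ independent of the point and of $A$.

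\textbf{Carving steps.}
\begin{enumerate}
\item Choose a Borel set $Y\subset X$ that is maximal $R$-discrete, for a suitable $R\gg Kr$. This is possible by the standard Kechris--Solecki--Todorcevic argument for locally finite Borel extended metrics. Then use a Borel proper coloring of the distance-$\le 10R$ graph on $Y$ with finitely many colors $c=1,\dots,m$; finitely many colors suffice because this graph has bounded degree, by the uniform growth bound.
\item Process the colors in order. At stage $c$, each center $y$ of color $c$ looks at the still-uncovered set $U$ and picks the least $j\in[K_0,2K_0]$ for which the carved region $C_y=B_{jr}(y)\cap U$ satisfies
\[
|B_r(C_y)\cap U|\le(1+\ve')|C_y|.
\]
Such a $j$ exists by the pigeonhole fact applied inside $U$. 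The centers of a single color are far apart, so their choices do not interact, and each step is a Borel operation.
\item Remove all the carved pieces and continue with the next color. After all $m$ colors every point is covered, since every point lies within $R$ of some center and the carving radii are comparable to $R$.
\end{enumerate}

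\textbf{Controlling the boundary.} The delicate point is that a piece $C_y$ carved at a later stage has its $r$-boundary partly inside regions already removed. This means $|B_r(C_y)|$ is not controlled by the Følner condition relative to $U$ alone. I would handle this with a weighted choice of radius. Choose $j$ so that the shell $B_{(j+1)r}(y)\setminus B_{jr}(y)$ carries at most an $\ve/m$ fraction of the mass of $B_{jr}(y)$, counting all points of $X$ and not just $U$. The pigeonhole fact applied over a window of length $K(r,\ve/m)$ guarantees such a $j$. Then every tile $C_y$ has
\[
B_r(C_y)\setminus C_y\subseteq \bigl(B_{(j+1)r}(y)\setminus B_{jr}(y)\bigr)\cup(\text{earlier carved pieces near }C_y).
\]

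The second term is the main obstacle. A small tile carved late could be dominated by the boundary of larger, earlier balls. To fix this, I would instead run the pieces as differences of Følner balls with radii chosen decreasing across colors, so that later tiles are ``shells'' whose boundary is contained in the Følner shells of the earlier balls. Alternatively, I would discard tiny leftover pieces by merging them into adjacent tiles, and verify $(r,\ve)$-Følnerness via the total-mass accounting. Diameters are bounded by $2(R+K r m)$, which gives the uniform bound.
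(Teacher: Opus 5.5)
\textbf{There is a genuine gap, at the point you yourself flag.} Your Steps 1--3 are fine as far as they go. They give a Borel partition of $X$ into bounded-diameter pieces $C_y$. But each $C_y$ is F{\o}lner only relative to the set $U$ that was still uncovered at its stage, whereas a tiling needs $|\bndry_r(C_y)|\le\ve|C_y|$ in $X$. Neither proposed fix closes this.

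\begin{itemize}
\item \emph{Weighted radius.} This makes the shell small compared with $|B_{jr}(y)|$. It says nothing compared with $|C_y|=|B_{jr}(y)\cap U|$, which can be far smaller.
\item \emph{Differences of F{\o}lner balls.} This places the backward boundary inside inner shells of earlier balls. Those shells are small relative to the \emph{earlier} balls, not relative to the new tile.
\item \emph{Concrete failure.} A late center $y$ may find that $B_{2K_0r+r}(y)\cap U$ is a small pocket or thin channel enclosed by earlier pieces. That piece is F{\o}lner relative to $U$; its relative boundary may even be empty. Yet its boundary in $X$ is comparable to or larger than its size, and nothing in your scheme, including the ordering of radii, rules this out.
\item \emph{Merging with ``total-mass accounting''.} This only bounds the \emph{average} boundary-to-mass ratio over all tiles. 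You need a per-tile statement: each receiving tile absorbs at most a $\delta$-fraction of its own size, and the receiving tile is itself F{\o}lner in $X$. Your sketch provides neither. Chains of bad pieces merging into other bad pieces would also have to be handled.
\end{itemize}

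\textbf{The missing idea.} The paper separates the two roles of relative and absolute F{\o}lnerness.
\begin{itemize}
\item \emph{Layering with buffers.} First build a Borel $(2R,\delta)$-F{\o}lner layering (\Cref{layering-existence}, \Cref{cor:subexp-layering}). The carved sets $F$ are F{\o}lner only relative to the current remaining space $X_i$. Crucially, before the next round one removes the whole buffer $B_{2R}(\supp\calF_i)$, not just the carved sets. So carved sets from \emph{all} rounds are $2R$-separated.
\item \emph{Absolute hulls.} Replace each $F$ by a hull $\phi(F)$ with $F\subseteq\phi(F)\subseteq B_R(F)$ that is $(r,\delta)$-F{\o}lner in $X$ itself. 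This is exactly your pigeonhole fact applied to the finite set $A=F$ rather than to a single center (\Cref{prop:subexp-setwise-Folner}). The buffers make these hulls pairwise disjoint. Here $R$ depends only on $r,\delta$, so no circularity arises.
\item \emph{Per-tile leftover bound.} Every point outside the hulls lies in $\bndry^{X_i}_{2R}(F)$ for some carved $F$. Relative F{\o}lnerness at scale $2R$ gives $|\bndry^{X_i}_{2R}(F)|\le\delta|F|\le\delta|\phi(F)|$. This is the per-tile bound you are missing.
\item \emph{Conclusion.} Assign leftovers accordingly and use $|\bndry_r(A\cup B)|\le\delta|A|+\vol(r)|B|$. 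Each tile is then $(r,(1+\vol(r))\delta)$-F{\o}lner (\Cref{packing-and-hull-implies-tiling}).
\end{itemize}

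Your centers-and-colors scheme could still supply the layering, provided you carve with $2R$-buffers and require relative F{\o}lnerness at scale $2R$. What your proposal lacks is the hull step and the per-tile leftover accounting.
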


Our proofs use two main new tools. First, following \cite{BY}, we use
ideas from ball-carving
algorithms in theoretical computer science to find ways of efficiently subdividing a space
of subexponential growth into finite pieces. However, we do a simpler
deterministic ball-carving construction which does not require any use of the Lov\'asz Local Lemma. These ideas are contained in
\Cref{sec:ball-carving}, where we introduce these ideas in an abstract
form which we call \define{F{\o}lner layerings}, and show they exist in all Borel
metric spaces of subexponential growth.
The existence of such
F{\o}lner layerings can already be used to easily prove our Theorem B on
F{\o}lner tilings, and we construct these tilings in 
\Cref{sec:folner-tiling}. The existence
of Borel 
F{\o}lner layerings is in fact equivalent to the existence of
Borel F{\o}lner tilings for F{\o}lner Borel extended metric spaces, and we show
this in \Cref{subsec:tiling-layer-equiv}. 
However, these F{\o}lner layerings are easier to construct inductively and to
analyze.

Our second main tool is to generalize some techniques
from the study of finite Borel asymptotic dimension and its connection to
hyperfiniteness. The one-parameter dimension growth of a metric space--the function giving its dimension at each
``scale''--originated in work
of Gromov \cite{G99} and has been well studied. We introduce a Borel analogue of Dranishnikov
and Sapir's two-parameter dimension growth function $\dim^X(s,d)$ from their paper
\cite{DSX}. Roughly, the Borel dimension $\dim_B^X(s,d)$ of $(X,\rho)$
%with separation $s$ and diameter bound $d$ 
is the least $n$ so there is a cover
of $X$ by $n + 1$ families of Borel sets $\calU_0, \ldots, \calU_n$ of diameter at
most $d$ so that any two distinct sets in $\calU_i$ have distance strictly
greater than $s$ (see \Cref{defn:dim-growth}). So the limit of this Borel dimension
function gives Borel asymptotic dimension of $X$: 
\[\asdim_B(X) = \lim_{s \to
\infty} \lim_{d \to \infty} \dim_B^X(s,d).\]
We show that 
certain conditions on the growth of this Borel dimension function
$\dim_B^X(s,d)$
imply hyperfiniteness. More generally, we give a hyperfiniteness condition for
a countable sequence of decreasing Borel extended metrics, generalizing the work from
\cite[Theorem 7.3]{CJMSTD} on the hyperfiniteness of increasing unions of
equivalence relations of finite asymptotic dimension: 

\begin{thmC}
  Suppose $X$ is a standard Borel space, and $\rho_0 \geq \rho_1 \geq \ldots$ is
  a decreasing sequence of locally finite Borel extended metrics on $X$.
  Suppose there exist sequences $(s_n)_{n \in \N}$, $(d_n)_{n \in \N}$ of
  positive real numbers with $s_n \to \infty$ such that for every $m$, for all
  sufficiently large $n$, 
  \[
  24\dim_B^{\rho_{m}}(s_{n+1},d_{n+1})d_{n-1} \leq s_n \leq d_n. 
  \]
  Then $\bigunion_n E_{\rho_n}$ is hyperfinite.

  In particular, suppose $a\geq 1$ and $b \geq 0$ and for every $m$, there is a
  constant $C_m$ such that 
  \[
    \dim^{\rho_m}_B(s,C_m s^a)\in O(s^b).
  \]
  Then $\bigunion_n E_{\rho_n}$ is hyperfinite provided $ab\leq 1/4$.
\end{thmC}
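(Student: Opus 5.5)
The plan is to build the hyperfinite witness explicitly. We construct an increasing sequence of finite Borel equivalence relations $F_0 \subseteq F_1 \subseteq \cdots$ whose union is $\bigunion_n E_{\rho_n}$, following the scheme of \cite[Theorem 7.3]{CJMSTD} for increasing unions of finite asymptotic dimension.

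\textbf{Step 1 (covers).} For each large $n$, apply the definition of $\dim_B^{\rho_{m}}(s_{n+1},d_{n+1})$ with $m=m(n)$ a slowly growing index, chosen so that the hypothesis holds for $\rho_{m(n)}$ at stage $n$. This gives Borel families $\calU^{n}_0,\dots,\calU^{n}_{k_n}$ with:
\begin{itemize}
\item $k_n=\dim_B^{\rho_{m(n)}}(s_{n+1},d_{n+1})$;
\item every set of $\rho_{m(n)}$-diameter at most $d_{n+1}$;
\item sets in the same family pairwise at distance $>s_{n+1}$.
\end{itemize}

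\textbf{Step 2 (sequential merging).} Define $F_{n}$ from $F_{n-1}$. We assume inductively that $F_{n-1}$-classes have $\rho$-diameter at most roughly $2d_{n-1}$ in a suitable metric. Process the colours $i=0,\dots,k_n$ in order. At colour $i$, each $U\in\calU^n_i$ absorbs every current class that meets a shrunken version of $U$, say the points of $U$ at distance $\ge$ (some multiple of $d_{n-1}$ times $i$) from $\partial U$. Classes meeting distinct sets of the same colour are never merged. This holds because those sets are $s_{n+1}$-separated and each absorbed class has diameter bounded in terms of $d_{n-1}$ plus previous growth.

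The key estimate is that each colour step enlarges diameters by at most $O(d_{n-1})$ beyond the $d_{n+1}$ (or $d_n$) scale. After $k_n+1$ steps the "buffer" consumed is about $24 k_n d_{n-1}$. The hypothesis $24\dim_B(s_{n+1},d_{n+1})d_{n-1}\le s_n$ is exactly what guarantees two things: the shrunken sets still cover everything at scale $s_n$, and the resulting classes have diameter $\le d_n$-ish. Since $s_n\le d_n$, the induction hypothesis propagates. The classes are finite because $\rho_m$ is locally finite, and all steps are Borel since they are local finite computations from the Borel covers.

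\textbf{Step 3 (exhaustion).} If $\rho_m(x,y)<\infty$, then for large $n$ we have $\rho_{m(n)}\le\rho_m$ and $s_n>\rho_m(x,y)$. At that stage the covering property puts $x,y$ in a common shrunken set, hence in a common $F_n$-class. So $\bigcup F_n=\bigcup E_{\rho_n}$.

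The main obstacle is the bookkeeping in Step 2: ensuring that at each colour step the merged classes stay well inside the separation $s_{n+1}$ while still guaranteeing that every pair at distance $\le s_n$ ends up together. I would try shrinking $U$ by $(k_n-i)$ multiples of $6d_{n-1}$ and running the estimate with constants until $24$ appears.

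\textbf{The "in particular" clause.} Choose $s_n$ and $d_n=C s_n^a$ with $s_n$ growing like $\exp(c\lambda^n)$, and ignore constants and the shift in $m$. The condition becomes roughly
\[
s_{n+1}^{ab}\, s_{n-1}^{a}\le s_n .
\]
Taking logarithms, $ab\lambda + a/\lambda \le 1$. This is satisfiable for some $\lambda>1$ exactly when $\min_\lambda(ab\lambda+a/\lambda)=2a\sqrt{b}\le 1$. That holds when $a^2b\le 1/4$, which follows from $ab\le 1/4$ because $a\ge1$ forces $a^2b \ge ab$. I would double-check this, choosing $\lambda=1/\sqrt{b}$ or adjusting for equality by slack in constants.
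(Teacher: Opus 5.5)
\textbf{There is a genuine gap in the first part, at Step 3, and it is precisely the difficulty the theorem has to overcome.}

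An $(s,d)$-cover has no Lebesgue number. On $\Z$, partition into consecutive intervals of $P>s$ points and colour them alternately with two colours. This is an $(s,P)$-cover in which the pair $\{P-1,P\}$ lies in no common set. Shrinking the sets, as in your Step 2, makes this worse: a point near the boundary of a set of a shrunken colour need not belong to any other set, so the shrunken sets need not even cover. Since your $F_n$ only grow, you would need an argument that each fixed pair $\{x,y\}$ is eventually joined. Nothing in a colour-by-colour absorption scheme prevents a given pair from being cut at every stage, and tuning the constants $6$ and $24$ cannot supply this.

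The paper does not merge classes directly. In \Cref{hyperfinite-construction} it builds, at stage $n$, a matrix $\calU^n_{ij}$ with $c_{n+1}+2$ rows and $c_n+1$ columns, where $c_n=\dim_B^{\rho_n}(s_n,d_n)$. Row $0$ is $\calW^n_j*\calU^{n-1}_j$, and row $i$ is $B_{d'_{n-1}}(\calU^n_{(i-1)j})*\calU^{n-1}_j$; these are enlargements, not shrinkings. Two consequences follow:
\begin{enumerate}
\item A pair with $\rho_n(x,x')\le d'_{n-1}$ is divided by at most one entry per column.
\item Column $j$ is generated from row $j$ of matrix $n-1$, so divisions at stage $n$ inject into divisions at stage $n-1$ that avoid its last row.
\end{enumerate}
Hence $u_{n+1}\le u_n-v_n$, the last row $\calV^n$ divides the pair for only finitely many $n$, and the tail intersections $\bigcap_{k\ge n}E_{\calV^k}$ witness hyperfiniteness.

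This counting is also why the hypothesis pairs $\dim_B(s_{n+1},d_{n+1})$ with $d_{n-1}$ and $s_n$: the number of rows at stage $n$ must exceed the number of columns at stage $n+1$. Your Step 2 uses an $(s_{n+1},d_{n+1})$-cover at stage $n$ but measures the buffer against $s_n$, which matches no coherent bookkeeping. For reference, the $24$ in \Cref{easier-hyperfinite} comes from $4(c+2)\le 12c$ for $c\ge 1$, together with $d'_n\le 2d_n$.

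\textbf{The ``in particular'' computation also contains errors.}
\begin{enumerate}
\item \emph{Wrong exponent.} The hypothesis bounds $\dim_B^{\rho_m}(s,C_ms^a)$ by $O(s^b)$ as a function of $s$, so the factor is $s_{n+1}^b$, not $s_{n+1}^{ab}$. With $\log s_n\approx c\lambda^n$ the condition becomes $b\lambda+a/\lambda\le 1$. Since $\inf_{\lambda>0}(b\lambda+a/\lambda)=2\sqrt{ab}$, this gives exactly $ab\le 1/4$. The optimal $\lambda$ is $\sqrt{a/b}$, which equals $2a$ at equality.
\item \emph{Reversed implication.} From $a\ge 1$ you get $a^2b\ge ab$. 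So $a^2b\le 1/4$ is the stronger condition and does not follow from $ab\le 1/4$; take $a=2$, $b=1/8$.
\item \emph{No slack at equality.} At $ab=1/4$, the pure choice $s_n=\exp(c\lambda^n)$ leaves no room for the factor $24$, the implied constant, or the $m$-dependence of $C_m$. Handling $C_m$ requires $d_n/s_n^a\to\infty$. The paper takes $\beta=2a$, $s_n=\exp(\beta^n\log n)$ and $d_n=\log\log(s_n)\,s_n^a$, which yields the decaying factor $O(n)\exp(-\beta^n/(2n^2))$.
\end{enumerate}
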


We note that there is a reasonable definition of the Borel dimension also for
locally countable Borel extended metric spaces $X$, using covers where
the sets in $\calU_i$ may be infinite, but they must still have bounded diameter
and the associated equivalence relation $E_{\calU_i}$ whose classes are the
elements of $\calU_i$ must be smooth. Using that
definition of Borel dimension, Theorem C above also remains true for locally
countable Borel extended metric spaces. See \Cref{rem:locally-countable}.

In \Cref{sec:ball-carving} we use our results on 
F{\o}lner layerings to compute
the following upper bound on the Borel dimension of a space of volume growth
$\exp(O(r^{\alpha}))$:

\begin{thmD}
  Suppose $X$ is a Borel extended metric space of volume growth bounded by 
  \[\vol(r) \in \exp(O(r^{\alpha}))\]
  for some $\alpha \in (0,1)$.
  Then there exists a constant $C > 0$ such that 
  \[\dim^X_B(s, Cs^{1/(1-\alpha)}) \in O(s^{\alpha/(1-\alpha)^2}).\]
\end{thmD}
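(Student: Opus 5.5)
We will prove Theorem D by a deterministic ball-carving construction in the spirit of \cite{BY}, organized as what we call a F{\o}lner layering. Fix $s$ and set $d = Cs^{1/(1-\alpha)}$. The plan is to build the cover $\calU_0,\dots,\calU_n$ in rounds. In each round we first choose a Borel maximal $d$-separated set of centers; such sets exist in locally finite Borel extended metric spaces by the standard Kechris--Solecki--Todorcevic argument, iterated on a Borel proper coloring of the intersection graph of $d$-balls. Around each center we carve a ball. We then separate the carved balls from one another by a buffer of width $s$, and the points lying in buffers are deferred to later rounds.

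The first step is the key radius selection lemma, which is where the growth bound $\vol(r)\le \exp(Kr^\alpha)$ enters. For a center $x$, consider the radii $r_j = r_0 + js$ for $j=0,1,\dots,J$ with $r_J \le d/2$. The ratios $|B_{r_{j+1}}(x)|/|B_{r_j}(x)|$ multiply to at most $\exp(K d^\alpha)$. Hence some $j$ satisfies
\[
|B_{r_{j+1}}(x)\setminus B_{r_j}(x)| \le \ve |B_{r_j}(x)|
\quad\text{with}\quad \log(1+\ve)\approx Kd^\alpha/J .
\]
Since $J\approx d/s$, this gives $\ve \approx K d^\alpha s/d = Ks\,d^{\alpha-1}$. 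Taking $d = Cs^{1/(1-\alpha)}$ makes $\ve$ a small constant (or, with a slightly larger $C$, as small as we like). We choose the least such $j$, which is a Borel choice. The annulus (buffer) is then only an $\ve$-fraction of the carved ball.

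The second step handles conflicts, since balls around different centers in the same round may overlap. We process the centers according to a Borel coloring of the graph on centers joined when they are within distance $2d$. By the growth bound this graph has bounded degree at most $\vol(2d)$, so a coloring with finitely many colors exists. Within one color class, the balls are separated by more than $d$, so no conflicts arise. Across color classes we carve sequentially, each time applying the radius selection lemma to the not-yet-covered points. We put each carved interior, minus the $s$-neighborhood of its neighbors, into the family for the current layer. The fraction of points surviving to the next layer is controlled by $\ve$ locally, and this is the F{\o}lner layering property.

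The third step, which we expect to be the main obstacle, is counting how many layers are needed to cover every point. This count is what produces the exponent $\alpha/(1-\alpha)^2$. A local density argument does not directly terminate pointwise, so instead we would argue by volume. Around any point $x$, each layer removes at least a $(1-\ve)$-type fraction of the remaining points in $B_d(x)$. Because $|B_d(x)| \le \exp(Kd^\alpha)$, after about $\log|B_d(x)|/\log(1/\ve)$ rounds nothing remains. However, $\ve$ is only a constant, and the buffer losses compound, so we would instead run the radius lemma with a shrinking $\ve$. The number of layers then becomes of order $d^\alpha \approx s^{\alpha/(1-\alpha)}$ times a further factor $d/s \approx s^{\alpha/(1-\alpha)}$ coming from the per-layer progress, which gives $s^{\alpha/(1-\alpha)+\alpha/(1-\alpha)} $-type bounds. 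Getting the exponent exactly $\alpha/(1-\alpha)^2 = \tfrac{\alpha}{1-\alpha}\cdot\tfrac{1}{1-\alpha}$ is the delicate bookkeeping. Concretely, we would guarantee that every point is carved, in one of the $O(d^\alpha)$ rounds, with each round using only $O(1)$ families after merging color classes whose balls are far apart. We would then check that $d^\alpha = s^{\alpha/(1-\alpha)}\cdot$ (the losses across rounds), adjusting $C$ as needed. Finally, each family $\calU_i$ consists of Borel sets of diameter at most $d$ that are pairwise more than $s$ apart, which yields the bound on $\dim^X_B(s,Cs^{1/(1-\alpha)})$.
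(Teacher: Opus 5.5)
You have the paper's architecture right: rounds of ball carving, where each round is a F{\o}lner layering whose carved sets are automatically $s$-separated and so form one family. Your Step~1 is the rescaled \Cref{prop:subexp-Folner-balls}, and it gives a \emph{constant} $\ve\approx\vol(d)^{2s/d}-1<1$ once $d=Cs^{1/(1-\alpha)}$. But there is a genuine gap in Step~3. The round count is where $\alpha/(1-\alpha)^2$ must come from, you leave it as unspecified bookkeeping, and the local claim you rely on is not valid. The F{\o}lner property bounds a carved set's boundary only relative to the whole set. That set can lie mostly outside $B_d(x)$ while its boundary sits inside, so nothing forces a round to remove a $(1-\ve)$-fraction of the survivors in $B_d(x)$. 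If it did, you would get $O(\log\vol(d))=O(s^{\alpha/(1-\alpha)})$ rounds, which is the stronger bound Yu announces via the Lov\'asz Local Lemma. Your other estimate, $d^\alpha\cdot d/s=s^{2\alpha/(1-\alpha)}$, is weaker than the target for $\alpha<1/2$.

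The missing idea is a charging argument whose radius grows with the number of rounds (\Cref{layering-complement}). Suppose every point left uncarved in a round lies in the $s$-boundary, taken in the subspace where carving happened, of some carved set of diameter at most $d$. Take any finite set $A$ of leftover points. The carved sets whose boundaries meet $A$ are disjoint, lie in $B_{d+s}(A)$, and have total size at least $|A|/\ve$. Hence $|A|\le\ve|B_{d+s}(A)|$, so $\vol^{Y_{i+1}}(t)\le\ve\vol^{Y_i}(t+d+s)$. Iterating gives $\vol^{Y_n}(0)\le\ve^n\vol(2dn)$. With $\ve$ a fixed constant (no shrinking is needed), this is $<1$ once $n\log(1/\ve)>K(2dn)^\alpha$, i.e.\ $n^{1-\alpha}\gtrsim d^\alpha$, i.e.\ $n\gtrsim d^{\alpha/(1-\alpha)}\asymp s^{\alpha/(1-\alpha)^2}$. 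The extra factor $1/(1-\alpha)$ in the exponent comes exactly from the radius $2dn$, which your estimate $\log|B_d(x)|/\log(1/\ve)$ drops.

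This charging also needs every uncarved point to sit in some carved set's buffer, and your Step~2 does not guarantee that. You use one maximal $d$-separated set of centers with carved radii at most $d/2$. A point at distance in $(d/2+s,d]$ from every center is then neither carved nor buffered, so it cannot be charged. The paper avoids centers (\Cref{layering-existence}, after rescaling by $s$). Inside the current subspace $X_i$ it repeatedly takes a Borel maximal $1$-separated family of F{\o}lner sets of diameter at most $2r$ and deletes their $1$-neighborhood. Every surviving $x$ has a F{\o}lner ball of $X_i$ containing it, so maximality forces $|B_r(x)\cap X_i|$ to drop strictly at each step. After $\vol(r)$ steps everything is covered, and all these steps still merge into a single $s$-separated family.
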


Together, Theorems C and D imply Theorem A for spaces with uniformly bounded
volume growth $\exp(O(r^\gamma))$. We set $a = 1/(1-\alpha)$ and $b =
\alpha/(1-\alpha)^2$ from the bounds in Theorem D, and then the condition $ab \leq
1/4$ from Theorem C gives hyperfiniteness for all $\alpha \leq \gamma$ where
$\gamma$ is as in Theorem A. The stronger version of Theorem A with only
pointwise volume growth bounds then follows from the uniformly bounded case by
defining a new collection of metrics with uniformly bounded volume growth whose finite
distance relations have the same union. 

We note that recently Jing Yu has announced an improvement of Theorem D
showing that under the same assumptions, $\dim^X_B(s, Cs^{1/(1-\alpha)}) \in
O(s^{\alpha/(1-\alpha)})$. The proof uses the Lov\'asz Local Lemma and randomized
ball carving methods from \cite{BY}. By combining this result with Theorem C as above,
Yu's result improves the bound on $\gamma$ in Theorem A to be the least
root of $4\gamma = (1 - \gamma)^2$ so $\gamma = 3 -
2 \sqrt{2} \approx 0.1716$. 

We remark that one fruitful aspect of our approach here is that it solves a
difficult issue in earlier work on Weiss's problem. Several previous partial
results on Weiss's problem, particularly \cite{GJ} and \cite{CJMSTD}, proved
those results for particular classes of countable amenable groups by first
proving positive results for free actions of these groups, then generalizing 
this to all actions by using tricks around classifying possible stabilizers
and the complexity of the conjugacy equivalence relations of these groups
to handle the non-free case. In particular, \cite{CJMSTD} uses a theorem of
Schneider and Seward \cite[Theorem 5.1]{SS} that if all free Borel actions of
countable polycyclic groups have hyperfinite orbit equivalence relations, then
all Borel actions of polycyclic groups have hyperfinite orbit equivalence
relations to overcome this difficulty.
Indeed, one result in \cite{CJMSTD}
is that free Borel actions of
the lamplighter group $\Z_2 \wr \Z$ are hyperfinite, but it is
left open whether all (in particular non-free) Borel actions of that group are hyperfinite. As described in
\cite[Section 8]{SS}, this difficulty in handling non-freeness is a
significant issue in continuing work on Weiss's problem. However, this
issue is handled easily in the present work. Because we are only using
upper bounds on growth as our assumption, and this upper bound
passes to subspaces and to quotient groups, our techniques work for all Borel
actions of groups with bounded growth of the type we are considering,
whether or not the action is free. We hope the ideas here continue to be
useful in approaching the non-free case of Weiss's problem. 

In \Cref{sec:mu-hyperfiniteness}, we discuss how dimension growth is related to
measure-theoretic hyperfiniteness. We introduce the Borel separation index
function $\si^X_B(s)$ of a Borel extended metric space $X$, as a quantitative
version of the 
asymptotic separation index of \cite{CJMSTD}. Precisely, $\si^X_B(s)$ is the least
$n$ so that there is a cover
of $X$ by $n + 1$ families of Borel sets $\calU_0, \ldots, \calU_n$ all of
finite diameter (but with no uniform bound on their diameter)
so that any two distinct sets in $\calU_i$ have distance strictly
greater than $s$. So the limit of this Borel separation index is the asymptotic
separation index from \cite{CJMSTD}: $\asi_B(X) = \lim_{s \to \infty} \si^X_B(s)$.
If $\mu$ is a Borel
probability measure on $X$, we also define the $\mu$-measurable separation
index $\si^X_\mu(s)$ where we may discard an invariant nullset, and
$\asi_\mu(X) = \lim_{s \to \infty} \si^X_\mu(s)$.
We then formulate a metric analogue of
Elek's \cite{E12} result (refining 
\cite{Kai}) characterizing $\mu$-hyperfiniteness of Borel graphs, see \Cref{lem:mu-hyperfinite-equiv}. We then use this 
to show Theorem E from the introduction, refining a result of Weilacher
\cite{Weil}
who proved the equivalence of (1) and (2) below, building on work of Conley and
Miller \cite{CM16}:

\begin{thmE}
  If $(X,\rho)$ is a locally finite Borel extended metric space, and $\mu$ is any Borel
  probability measure on $X$, then the following are equivalent:
  \begin{enumerate}
  \item $E_\rho$ is $\mu$-hyperfinite
  \item $\asi_\mu(X) \leq 1$.
  \item $\si_\mu^X$ has non-exponential growth: there is no $b > 1$ such that
  $\si_\mu^X(s) \geq b^s$ for all sufficiently large $s$.
  \end{enumerate}
\end{thmE}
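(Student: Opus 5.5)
I will prove the cycle of implications (1)$\Rightarrow$(2)$\Rightarrow$(3)$\Rightarrow$(1). The implication (2)$\Rightarrow$(3) is immediate. If $\asi_\mu(X)\le 1$, then $\si^X_\mu(s)$ is nondecreasing in $s$ and bounded by $1$ for all large $s$, so it is eventually bounded and certainly not $\geq b^s$. The equivalence of (1) and (2) is Weilacher's theorem, but I will reprove the direction (1)$\Rightarrow$(2) directly, since it is short. Suppose $E_\rho$ is $\mu$-hyperfinite. Discarding an invariant nullset, write $E_\rho=\bigcup_n F_n$ with $F_n$ finite Borel equivalence relations. Fix $s$. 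For each $x$, the ball $B_s(x)$ is finite by local finiteness, so $B_s(x)\subset [x]_{F_n}$ for large $n$. Hence the Borel sets $A_n=\{x : B_s(x)\subset [x]_{F_n}\}$ increase to $X$. Using a Borel choice of a sufficiently fast sequence $n_0<n_1<\cdots$, and the standard "two-color" trick from \cite{CJMSTD}, I will build two families of finite-diameter sets with $s$-separation. Concretely, I take the interiors of the $F_{n_{k}}$-classes at alternate levels and the boundary regions between levels, and get $\si_\mu^X(s)\le 1$. Since this only needs to hold off a nullset, the measure version suffices.

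The substantive direction is (3)$\Rightarrow$(1). Here I use \Cref{lem:mu-hyperfinite-equiv}, the metric analogue of Elek's characterization. I expect it to say that $E_\rho$ is $\mu$-hyperfinite iff for every $\ve>0$ and $s$ there is a Borel set $Y$ with $\mu(Y)<\ve$ such that $X\setminus Y$ has a decomposition into finite pieces pairwise at distance $>s$. Equivalently, removing a small set makes all $s$-connected components finite. Suppose $\si^X_\mu(s)=n$ is witnessed by $\calU_0,\dots,\calU_n$, each family $s$-separated with finite-diameter pieces. Fix a target separation $t$. Choose $s=kt$ with $k$ large, and apply the witness at scale $s$. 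For each family $\calU_i$ and each $j<k$, consider the "$t$-shell" $\{x : jt<\rho(x,X\setminus\bigcup\calU_i)\le (j+1)t\}$ measured from the complements. By pigeonhole, the shrinkings $\calU_i^{(j)}$ (points at depth $>(j+1)t$ into their set) satisfy the following. For some $j$, the union over $i$ of the removed shells has measure at most roughly $(n+1)/k$, after averaging over a uniform choice of $j$ per family. The remaining shrunk sets from different families must then still be separated or merged in a controlled way. The cleaner route is the layered construction: process $\calU_0,\dots,\calU_n$ in order, and at stage $i$ carve out a shell of width $t$ around $\bigcup\calU_i$ relative to what remains. The components are then built from finitely many finite-diameter pieces, so they are finite, and the discarded measure is at most $\sum_i \mu(\text{shell}_i)$.

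The main obstacle is the measure accounting. The number of families $n+1=\si_\mu(s)$ grows with $s$, so I need the average shell measure per family to be $\lesssim \ve/(n+1)$. With $k=s/t$ available depths, pigeonhole gives shell measure about $1/k$ per family, but the shells must nest across families, so depths combine multiplicatively. This is exactly where non-exponential growth enters. I will run the argument along a sequence of scales $s_1<s_2<\cdots$ with $s_{m+1}$ chosen relative to $s_m$, using $\si_\mu(s)<b^s$ infinitely often for each $b>1$. I will pick $s$ with $\si_\mu(s)\le b^{s}$ where $b=(1+\delta)^{1/t}$, and then choose geometric depth thresholds. With this choice, the fraction lost at each of the $n+1$ stages compounds to at most $1-(1-\delta')^{n+1}$, which remains below $\ve$. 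I expect to spend most effort making this compounding estimate work, namely choosing widths so that a $(1+\delta)$-factor volume loss per layer suffices. This amounts to a measure-theoretic isoperimetric pigeonhole for nested neighborhoods $B_{jt}(\cdot)$ in a probability space. Once it is done, \Cref{lem:mu-hyperfinite-equiv} yields $\mu$-hyperfiniteness.
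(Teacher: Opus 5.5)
\textbf{The gap is in (3)$\Rightarrow$(1)}, at exactly the step you defer. The accounting you propose cannot close. If each of the $n+1=\si^X_\mu(s)+1$ stages loses a fraction $\delta'$ of the mass that remains, then $1-(1-\delta')^{n+1}<\ve$ forces $\delta'$ to be of order $\ve/(n+1)$. Nothing in your construction produces such a small per-stage fraction. Non-exponential growth only gives scales with $\si^X_\mu(s)<b^s$ for $b$ close to $1$, and such $\si^X_\mu(s)$ may still be superpolynomial in $s$. Additive pigeonhole over the layers gives shells of measure about $1/k$ per stage, so the total is $(n+1)/k$, which only handles sublinear growth. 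A ratio pigeonhole bounds the shell by $\delta$ times the mass kept at that stage, which need not be a small fraction of what remains. Your parameters are also off. The pieces coming from one $s$-separated family must stay $t$-separated, so only about $(s-t)/(2t)$ layers are usable, not $s/t$. With $b=(1+\delta)^{1/t}$, the number of families is not small compared with $(1+\delta)^{k}$, and that is the comparison that actually matters. Finally, you never handle a family whose remaining support is too small for any good layer to exist.

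\textbf{The missing idea} is to charge each shell to what is kept rather than to what remains. At stage $i$, let $S_i=\supp(\calU_i)\cap X_i$ and $k\approx(s-t)/(2t)$, and look for $j<k$ with $\mu(B_{(j+1)t}(S_i)\cap X_i)\le(1+\delta)\,\mu(B_{jt}(S_i)\cap X_i)$. If such $j$ exists, keep the pieces $B_{jt}(A\cap X_i)\cap X_i$ for $A\in\calU_i$, discard the shell, and set $X_{i+1}=X_i\setminus B_{(j+1)t}(S_i)$. Each shell costs at most $\delta$ times the mass kept at its stage, and kept masses at different stages are disjoint, so all shells together cost at most $\delta$, independently of $n$. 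If no such $j$ exists, then $\mu(S_i)\le(1+\delta)^{-k}$, and you discard $S_i$ outright. The total loss is at most $\delta+(n+1)(1+\delta)^{-k}$. With $b=(1+\delta)^{1/(4t)}$, non-exponential growth supplies arbitrarily large $s$ for which the second term is small. The kept pieces then form a $t$-separated Borel family of finite sets of measure at least $1-\ve$, which is item (2) of \Cref{lem:mu-hyperfinite-equiv}.

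Repaired this way, your route is a genuinely different, direct proof, and it avoids the measure-exhaustion step inside that lemma. The paper instead argues by contraposition through item (3). A positive-measure $(s,\eta)$-expander $X'$ forces $\mu(\supp(\calA))\le(1+\eta)^{-n}\mu(X')$ for every $2ns$-separated family $\calA$ of finite subsets of $X'$, so any cover needs at least $(1+\eta)^n$ families.

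\textbf{On (1)$\Rightarrow$(2)}, your sketch is too vague to check. Nothing controls $s$-walks inside one color that cross several levels. Also, if $\mu$ enters only through the initial null set, you would be proving the purely Borel bound $\si^X_B(s)\le 1$ for hyperfinite $E_\rho$, which is not what Weilacher proves. Cite Weilacher, as the paper does.
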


Finally, we make some conjectures concerning a Borel version of the single variable
Borel dimension growth function $\sdim^X_B(s) = \lim_{d
\to \infty} \dim^X_B(s,d)$ which was originally introduced by
Gromov \cite{G99}.
We will call this single-variable Borel dimension growth function
$\sdim^X_B(s)$ \define{the Borel
dimension growth of $X$}.
As a corollary of Theorem E, if every measure hyperfinite Borel
equivalence relation is hyperfinite, then every locally finite Borel extended metric space
of subexponential Borel dimension growth is hyperfinite since $\si^X_\mu(s) \leq
\si^X_B(s) \leq \sdim^X_B(s)$ for all Borel probability measures $\mu$ on $X$. Since it
is an open problem whether every measure-hyperfinite Borel equivalence
relation is hyperfinite, we see there is a large gap between the
current rates of dimension growth which imply hyperfiniteness, and the
exponential dimension growth rates which are present in all known
non-hyperfinite CBERs which are not measure hyperfinite. 

We end the paper with a pair of conjectures on Borel dimension growth which
would imply a positive solution to Weiss's problem. 

\begin{conjF}
  \mbox{ }
  \begin{enumerate}
    \item 
    Suppose $\Gamma \actson X$ is a Borel action of a finitely
    generated amenable group $\Gamma$ %with finite symmetric generating set $S$
    on a standard Borel space $X$, and $\rho$ %$\rho_S$
    is the word metric on this action.
    Then $(X,\rho)$ has subexponential Borel dimension growth.
    \item If $(X,\rho)$ is a locally finite Borel extended metric space which 
    has subexponential Borel dimension growth, then $E_\rho$ is hyperfinite. 
  \end{enumerate}
\end{conjF}

The following considerations motivate Conjecture F and provide partial evidence
for its two assertions. First, part (1) is suggested by the conjectured
equivalence between subexponential dimension growth and property A in the
classical setting (see \cite[Question 2.13]{DSX} and \cite[Page 922]{O14}).
Second, Elek and Tim\'ar's work on F{\o}lner property A, strong F{\o}lner
hyperfiniteness, and uniform Borel amenability provides related structural evidence \cite{ET25,ETX}. Third, the
close connection between measure-theoretic hyperfiniteness and subexponential
separation-index growth in Theorem E provides evidence for part (2). Finally,
several results in \cite{CJMSTD} and in the present paper establish special cases
of the two assertions, both by proving Borel dimension-growth bounds for actions
of certain amenable groups and by showing that sufficiently slow dimension
growth implies hyperfiniteness.

Part (1) of Conjecture F would already require substantial progress in the
classical study of the dimension growth of countable groups. It is an open
question whether every finitely generated amenable group has subexponential
dimension growth, and (1) would be a strengthening of this. Indeed, it is
already an open problem whether the amenable group $\Z \wr (\Z \wr \Z)$ has
subexponential dimension growth \cite[Section 4]{DSX}\footnote{
This problem seems highly
related to the open problem in Dranishnikov-Sapir \cite[Question 3.9]{DSX}
of understanding the function $k \mapsto \sdim^{\Z^k}(2)$ of the dimension
growth of $\Z^k$ at scale 2 with the $\ell^1$ metric.}.
However we
note that the conjecture 
that a group has property A if and only if it has
subexponential dimension growth would imply that this is true, since every
amenable group has property A.
A key difficulty in this investigation is the open question of how wreath products affect dimension growth (see \cite[Section 4]{DSX}).
We note here that in parallel to this, wreath products have been
a key difficulty in further progress on Weiss's question; whether there is a
positive solution to Weiss's question for the group $\Z \wr \Z$ has been the key
next ``test problem'' after the work in \cite{CJMSTD}.

%In \Cref{subsec:FolnerA-conjecture} we also state a stronger version of part (1)
%of this conjecture for all Borel metric spaces of uniformly finite upper volume
%growth with Borel F{\o}lner property A.

Part (2) of Conjecture F would require a significant new advance
beyond the methods in \Cref{sec:hyperfiniteness}. Those methods originate in the
work of Gao and Jackson \cite{GJ} and have been a central tool for
hyperfiniteness proofs since they were introduced. 
But we have pushed them as far as possible here; in \Cref{rem:ab-leq-1/4} we
show rigorously that the condition $ab\leq 1/4$ in Theorem C cannot be improved further using
the methods of \Cref{sec:hyperfiniteness}.
However, we
remark that even modest progress here would already be very interesting. For
example, even showing that part (2) is true for Borel extended metric spaces of
sublinear dimension growth would substantially increase the exponent in our main
Theorem A. Further, since the dimension growth of spaces whose volume growth is
$\exp(O(r^\alpha))$ for some fixed $\alpha<1$ is polynomially bounded with
polynomial control by our Theorem D, proving this polynomial special case of
Conjecture F would imply that all Borel extended metric spaces whose volume growth is
$\exp(O(r^\alpha))$ for some fixed $\alpha<1$ are hyperfinite.

\subsection{Acknowledgements}

The authors would like to thank Anton Bernshteyn, Atticus Cull, Felix Weilacher, and Jing Yu
for helpful discussions about the material of the paper.

GPT 5.6 Pro was used to proofread a draft of this paper.

\section{Preliminaries}
\label{sec:prelim}

\subsection{Extended metric spaces}

An extended metric space is one where the metric may take the value $\infty$.
Formally, an \define{extended metric} on a set $X$ is a function $\rho : X^2 \to [0,
\infty]$ that is symmetric, satisfies the triangle inequality,
and where $x = x' \leftrightarrow \rho(x, x') = 0$ for all $x, x' \in X$. An \define{extended metric space}
$(X,\rho)$ is a set $X$ equipped with an extended metric. One example
of an extended metric space is the shortest path metric on a graph that has more than
one connected component, so points in different connected components are at
infinite distance. We will typically use the uppercase letters $X,Y,Z$ to denote 
metric spaces, omitting the metric $\rho$ when it is understood. We will typically use $x,
y, z$ for points in these spaces, and $A, B, C$ for subsets of them. If
$(X,\rho)$ is an extended metric space and $X' \subset X$ is a subset of $X$, we
can equip $X'$ with the restriction $\rho \restriction X'$ of the metric $\rho$
to $X'$ to obtain the sub-metric space $(X',\rho \restriction X')$ of $(X,\rho)$.

For the remainder of this section, assume $(X,\rho)$ is an extended metric space.
We define the distance from $x \in X$ to a set $A \subset X$ by 
$\rho(x,A) \coloneqq \inf_{y \in A} \rho(x,y)$, and 
if $A,B \subset X$ 
their distance is
$\rho(A,B) \coloneqq
\inf_{x \in A, y \in B} \rho(x,y)$. 
The \define{diameter} of $A$ is $\diam(A) \coloneqq \sup_{x,y \in A} \rho(x,y)$.
If $r \in [0,\infty)$ and $x \in X$, we let 
\[B_r(x) \coloneqq \{x' \in X : \rho(x, x') \le r\}\] 
denote the \define{closed $r$-ball around $x$}. Note that the cardinality
of any $0$-ball is $1$ since $B_0(x) = \{x\}$. If $A \subset X$ is a set,
we define $B_r(A) \coloneqq \{x' \in X : \rho(x',A) \leq r\}$ to be the set of all
points of distance at most $r$ from $A$. 
In case we need to specify the metric or space in our definitions (such as when we have more than one metric
on a space, or a point or set is contained in multiple different metric spaces), we
add $\rho$ or $X$ as a superscript to our notations such as
$B_r^X(A)$ or $\diam^{\rho}(A)$ to emphasize the metric space we are working
in.

We say an extended metric space $(X,\rho)$ is \define{locally finite} if
$B_r(x)$ is finite for every $x \in X$ and $r \in [0,\infty)$, and we say it is
\define{locally countable} if $B_r(x)$ is countable for every $x \in X$ and $r
\in [0,\infty)$. 
We define the \define{(uniform upper) volume growth} of the space $X$ to be
the function giving the supremum of the cardinality of all $r$-balls in
$X$ for each $r$:
\[\vol(r) \coloneqq \sup_{x \in X} |B_r(x)|.\]
We say that $X$ has uniformly bounded volume growth if $\vol(r) < \infty$
for all $r > 0$.

Some important classes of growth are as follows.
We say that a real valued function $f$ has \define{subexponential} growth if 
for every $b > 1$, $f(x) < b^x$ for all sufficiently large $x$. Equivalently,
$\log(f(x)) \in o(x)$.
One important growth bound that implies subexponential growth is the existence of
$C > 0$ and $\alpha < 1$ such that $f(x) \leq \exp(C x^{\alpha})$ for all
sufficiently large $x$. We will write $f \in \exp(O(x^\alpha))$ to mean this.
This condition is equivalent to the weaker condition that there is
$C, D > 0$ such that $f(x) \leq D \exp(C x^{\alpha})$ for all sufficiently large $x$.
A final condition on growth that is weaker than having subexponential growth
is non-exponential growth, i.e. not having an exponential function as a lower
bound.
We say that $f$
has \define{non-exponential} growth if there does not exist any $b > 1$ such that $f(x)
\geq b^x$ for all sufficiently large $x$.

If $A \subset X$, we define the \define{external $r$-boundary} of $A$ to be:
\[\bndry_r(A) \coloneqq  B_r(A) \setminus A.\] 
We say that a nonempty finite
set $A$ is \define{$(r,\ve)$-F{\o}lner} if $|\bndry_r(A)| \leq \ve
|A|$, or equivalently if $|B_r(A)| \leq (1 + \ve)|A|$. We say that $A$ is
simply \define{$\ve$-F{\o}lner} if it is $(1,\ve)$-F{\o}lner.

We will often deal with
families $\calU \subset \powset(X)$ of subsets of $X$, where $\powset(X)$
denotes the powerset of $X$, and we will use
uppercase calligraphic letters like $\calA, \calB, \calU$ to denote such
collections. All such families of sets we consider in this paper will be
collections of sets that all have finite diameter. 
The \define{support of $\calU$}, denoted
\[\supp(\calU) \coloneqq \bigcup_{A \in \calU} A\]
is the union of all its elements. $\calU \subset \powset(X)$ is said to be a \define{cover} of $X$ if
$\supp(\calU) = X$. We say that $\calU$ is a \define{disjoint family} if all distinct
$A, B \in \calU$ are disjoint. 
We say that
$\calU$ has \define{diameter uniformly bounded by $r$} if %$\mesh(\calU) \leq r$,
$\diam(A) \leq r$ for all
$A \in \calU$. 
We say that $\calU$ is \define{$s$-separated} if $\rho(A,B)
> s$ for all distinct $A, B \in \calU$. Hence, a $0$-separated family is
disjoint. 
We let 
\[B_r(\calU) \coloneqq \{B_r(A) : A \in \calU\}\]
denote the collection consisting of the closed $r$-balls around every $A \in \calU$.

Suppose $\calU = (\calU_0, \calU_1, \ldots, \calU_n)$ is a finite sequence of families of
subsets of $X$, where $\calU_i \subset \powset(X)$ for every $i$. 
We define the support of this
sequence 
\[\supp(\calU) = \supp(\calU_0, \ldots, \calU_n) \coloneqq \supp(\calU_0) \union \ldots
\union \supp(\calU_n)\] 
to be the union of the supports of the $\calU_i$. 
We will say that $\calU_0, \calU_1, \ldots, \calU_n$ \define{covers}
$X$ 
if $\supp(\calU_0, \ldots, \calU_n) = X$.

\subsection{Descriptive set theory}

Our standard reference for basic descriptive set theory is \cite{K95}. Our
descriptive set-theoretic definitions and conventions follow that book. 
A \define{Borel extended metric space} (see \cite[Section 2.3]{CJMSTD}) is a standard Borel space $X$
equipped with an extended metric $\rho \colon X^2 \to [0,\infty]$ such that
the function $\rho$ is Borel.

The \define{finite distance equivalence relation} on $(X,\rho)$, denoted
$E_\rho$, is the equivalence relation on $X$ where $x
\mathrel{E_\rho} x'$ if and only if $\rho(x, x') < \infty$. Note that if
$(X,\rho)$ is a locally countable extended metric space, then $E_\rho$ is a
\textbf{countable Borel equivalence relation} or CBER, meaning the relation
$E_{\rho}$ is Borel as a subset of $X \times X$, and all of
its equivalence classes are countable. A CBER $E$ is \define{hyperfinite} if it
is an increasing union $E = \bigunion_n F_n$ of CBERs $F_0 \subset F_1 \subset \ldots$ with finite classes. See \cite{K25} for a recent survey
of the theory of countable Borel equivalence relations, which is our
standard reference for concepts of hyperfiniteness of
CBERs, and basic properties about them. 

Most of the families of sets $\calF \subset \powset(X)$ that we will use in
the paper will be disjoint families of sets. We will call such a family
\define{Borel} if $\supp(\calF)$ is Borel, and the equivalence relation
$E_{\calF}$ on $\supp(\calF)$ whose classes are exactly the sets in $\calF$
is a Borel equivalence relation on $\supp(\calF)$. 

In this paper, we largely work in the setting where the metric space
$(X,\rho)$ is locally finite, and our families $\calF \subset \powset(X)$
have bounded diameter, and thus each $A \in \calF$ is finite. 
In this case, we could equivalently work
with the standard Borel space $[X]^{< \infty}$ of finite subsets of $X$, in
which case $\calF$ is Borel as a subset of that space iff it is Borel in
the sense of the above definition (i.e. if $\supp(\calF)$ and $E_\calF$ are
Borel).

We omit some routine verifications that
some of our constructions are Borel in our proofs, since they are defined by
``local rules'' and are hence Borel by \cite[Lemma 5.17]{P}, or where the
Borelness follows from Lusin-Novikov uniformization. 

\subsection{Dimension growth}
\label{subsec:dim-growth}

Recall asymptotic dimension was introduced by Gromov \cite{G93}, and one of
its equivalent definitions uses the following notion we will call an
$(s,d)$-cover, following \cite{JYX}: 
\begin{defn}[\cite{JYX}]
  Suppose $(X,\rho)$ is an extended metric space and $s,d \in [0,\infty)$.
  An \define{$(s,d)$-cover}
  $\calU =(\calU_0, \ldots, \calU_n)$ of $X$ is a cover where each $\calU_i$
  is $s$-separated and has diameter uniformly bounded by $d$. We say
  that the $(s,d)$-cover $\calU=(\calU_0, \ldots, \calU_n)$ has $n+1$
  \define{elements} or \define{colors}.
\end{defn}

Equivalently, one may color the points of $X$ by $n+1$ colors and consider
the equivalence relation generated by pairs of points of the same color
that are at distance at most $s$, which is called the $s$-walk equivalence
relation. Then the existence of an $(s,d)$-cover
is equivalent to the statement that the $s$-walk equivalence relation has classes of diameter
at most $d$.

Using this definition of an $(s,d)$-cover, $X$ has \define{asymptotic dimension} $n \in \N$ if $n$ is
the least such that for every $s \geq 0$ there exists $d$ for which
there is an $(s,d)$-cover of $X$ with $n + 1$ elements. 
(See \cite{BD} for a discussion of many other equivalent definitions).

More generally, it is natural to consider the two-parameter function that records,
for each separation bound $s$ and diameter bound $d$, the least $n$ such that
there is an $(s,d)$-cover of $X$ with $n+1$ elements.
This was first formally defined in the paper of 
Dranishnikov-Sapir \cite{DSX}.\footnote{We emphasize that these definitions come from the
expanded and updated 2012 arXiv v4 of the paper and not the 2011 published
version of the paper \cite{DS11}. Some errors in that version were corrected
in a corrigendum \cite{DS12}, prompting the expanded arXiv v4: the ``final
version'' of the paper.}
Precisely, the \define{$(s,d)$-dimension} of a metric space $X$ is: 
\[
  \dim^X(s,d) \coloneqq \inf \{n \colon \text{there is an $(s,d)$-cover of $X$ with $n+1$ elements}\}. \]
Note that $\dim^X(s,d)$ is increasing in the first coordinate, and
decreasing in the second coordinate.
The limit of $\dim^X$ gives the usual asymptotic dimension of $X$:
\[
  \asdim(X) \coloneqq \lim_{s \to \infty}
  \lim_{d \to \infty} \dim^X(s,d).
\]

The \define{single-variable dimension growth function}
\[\sdim^X(s) = \lim_{d \to \infty} \dim^X(s,d)\]
has also been studied extensively, and was also originally defined by
Gromov~\cite[Section 6.F]{G99}, who called it the dimension of $X$ on the scale
$s$.
It is an open question whether a finitely
generated group has property A if and only if it has subexponential dimension growth
(see \cite[Question 2.13]{DSX} and \cite[Page 922]{O14}, and see also
\cite{O12}).
We mention also that if $d \colon [0,\infty) \to [0,\infty)$ is
an increasing function, then \define{the $d$-controlled dimension of $X$} is the
function $s \mapsto \dim^X(s,d(s))$. For example, \cite{DSX} proves many
bounds on the $d$-controlled dimension growth for various groups and choices of functions
$d$.

We remark that an alternate definition of asymptotic dimension based on
``uncolored covers'' is as follows. Say the $r$-multiplicity of a cover $\calU \subset \powset(X)$
of $X$ is $\sup_{x \in X} |\{U \in \calU \colon U \inters B_r(x) \neq \emptyset\}|$,
that is 
the supremum of the number of elements of $\calU$ that are met by some $r$-ball.
Then $\asdim(X) \leq n$ if and only if for
every $r$ there exists $d$ and a cover $\calU \subset \powset(X)$ of $X$ by sets of
diameter at most $d$ so that the $r$-multiplicity of $\calU$ is $\leq n + 1$
\cite[Theorem 19]{BD}. Based on this equivalent definition of asymptotic
dimension, one could define ``uncolored'' dimension growth
functions. For example, let $\udim^X(r,d) = \inf \{n \colon $ there exists a
cover $\calU \subset \powset(X)$ of $X$ by sets of diameter at most $d$ so that
the $r$-multiplicity of $\calU$ is $n + 1\}$. We note that $\udim^X(s/2,d) \leq
\dim^X(s,d)$ simply by taking the union of the elements of a ``colored'' cover to obtain an uncolored
one.
Dranishnikov has defined yet
another related dimension growth function based on Lebesgue numbers \cite{D06}.
The exact relationship between all these colored vs uncolored
dimension growth functions is an open problem
\cite[Question 1.5]{DSX}. We will not study these ``uncolored'' dimension
growth functions; they do not correspond to the original definition of
dimension growth given by Gromov, and they are less related to our main
concerns.

This paper studies a Borel version of 
Dranishnikov-Sapir's two-parameter dimension growth function, which we
define for locally finite metric spaces. 

\begin{defn} \label{defn:dim-growth}
  Let $(X,\rho)$ be a locally finite Borel extended metric space. 
  A \define{Borel $(s,d)$-cover} of $X$ is an $(s,d)$-cover
  $\calU = (\calU_0, \ldots, \calU_n)$ where each $\calU_i$ is Borel.
  We define the two-parameter Borel dimension function of $X$ by:    
  \[
    \dim^X_B(s, d) \coloneqq \inf\{n \in \N \colon
      \text{there is a Borel $(s,d)$-cover of $X$ with $n+1$
      elements}\}.
  \]
  We define the associated single-variable Borel dimension function of $X$ by:
  \[\sdim^X_B(s) \coloneqq \lim_{d \to \infty} \dim_B^X(s,d).\]
\end{defn}

Note that the Borel asymptotic dimension of $X$ as defined in
\cite{CJMSTD} for locally finite metric spaces is related to its Borel dimension growth by
\[
  \asdim_B(X) = \lim_{s \to \infty} \lim_{d \to \infty}
  \dim_B^X(s,d) = \lim_{s\to \infty} \sdim_B^X(s).
\]

We note that one key upper bound on $\dim_B^X(s,d)$ is $\vol(s)$, just as it is
classically (see \cite[Lemma 2.3]{DSX}).

\begin{prop}
  Suppose $(X,\rho)$ is a Borel extended metric space of uniformly bounded volume growth.
  Then
  $\dim_B^X(s,d) \leq \vol(s)-1$ for all $d$, and so $\sdim_B^X(s) < \vol(s)$. 
\end{prop}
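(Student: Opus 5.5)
The plan is to build a Borel $(s,d)$-cover with $\vol(s)$ colors in which every set is a singleton. Then the diameter bound $d$ holds trivially for every $d \geq 0$, and only the $s$-separation needs work.

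For a family of singletons $\calU_i = \{\{x\} : x \in A_i\}$, being $s$-separated means exactly that distinct points of $A_i$ are at distance $> s$. So it suffices to partition $X$ into Borel sets $A_0, \ldots, A_n$, with $n = \vol(s)-1$, each of which is independent in the graph $G_s$ on $X$ joining distinct $x,x'$ with $\rho(x,x') \leq s$.

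The graph $G_s$ is Borel, since $\rho$ is Borel and $G_s = \{(x,x') : x \neq x', \rho(x,x') \leq s\}$. Each vertex $x$ has neighbours contained in $B_s(x) \setminus \{x\}$, so the degree is at most $\vol(s) - 1 =: \Delta$. The Kechris--Solecki--Todorcevic theorem gives every Borel graph of degree at most $\Delta$ a Borel proper coloring with $\Delta + 1$ colors. Taking $A_i$ to be the $i$-th color class gives a Borel $G_s$-independent partition into $\vol(s)$ pieces.

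If one wants to avoid citing that theorem, the argument is short. Since $X$ is locally finite, the graph is locally finite, and Lusin--Novikov uniformization gives a Borel proper coloring of $G_s$ with countably many colors $c \colon X \to \N$ (for example via a countable family of Borel sets separating points). Then recolor greedily. At stage $k$, every point $x$ with $c(x)=k$ receives the least color in $\{0,\ldots,\Delta\}$ not already used by its neighbours recolored at earlier stages. Points in the same stage are pairwise nonadjacent, so no conflicts arise, and each stage is Borel by Lusin--Novikov. A free color always exists because $x$ has at most $\Delta$ neighbours.

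The families $\calU_i$ are Borel in the paper's sense: the supports $A_i$ are Borel, and $E_{\calU_i}$ is equality on $A_i$. They cover $X$, are $s$-separated, and have diameter $0 \leq d$. Hence $\dim_B^X(s,d) \leq \vol(s) - 1$ for all $d$. Taking the limit as $d \to \infty$ gives $\sdim_B^X(s) \leq \vol(s) - 1 < \vol(s)$.

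The only step needing care is the Borel coloring. It is standard, so no real obstacle is expected.
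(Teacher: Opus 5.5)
Your proof is correct and takes the same approach as the paper. The paper also uses the graph joining distinct points at distance at most $s$, bounds its degree by $\vol(s)-1$, applies the Kechris--Solecki--Todorcevic theorem to get a Borel $\vol(s)$-coloring, and takes the color classes as $s$-separated families of singletons; your greedy-recoloring paragraph just unpacks the proof of that cited result.
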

\begin{proof}
  Let $G$ be the graph on $X$ where distinct
  $x$ and $y$ are adjacent if $\rho(x,y) \leq s$. So each vertex has at most
  $\vol(s) - 1$ neighbors. 
  Then $G$ has a Borel $\vol(s)$-coloring $f \colon X \to \{0, \ldots,
  \vol(s)-1\}$ by \cite[Proposition
  4.6]{KST99}. So letting $\calU_i = \{\{x\} \colon f(x) = i\}$ be the singletons
  consisting of points of color $i$, each family $\calU_i$ is Borel, $s$-separated, and has
  diameter $0$, and hence $\calU_0, \ldots, \calU_{\vol(s) -1}$ is an
  $(s,0)$-cover of $X$.
\end{proof}

Hence, two particularly important classes of Borel extended metric spaces
always have $\dim^X_B(s,d)$ and hence $\sdim^X_B(s)$ bounded above by
exponential functions in $s$, since they have exponential volume growth: the path
length metric on bounded degree Borel graphs, and 
the word metric on a Borel action of a finitely generated group. 
Hence, a key dividing line here will be between those spaces with
exponential dimension growth and non-exponential dimension growth.

In \Cref{sec:mu-hyperfiniteness} we will also consider the Borel separation
index of $X$, by relaxing the condition that the collections $\calU_i$ of sets
in the covers of $X$ have uniformly
bounded diameter to the requirement that these sets each have bounded diameter:
\begin{defn}
The Borel separation index of a locally finite Borel extended metric space $(X,\rho)$
is the function defined by 
$\si^X_B(s) \coloneqq \inf \{n \colon$ there is a Borel cover $\calU_0,
\ldots, \calU_n$ of $X$ by $n+1$ families of sets of bounded diameter such that each
$\calU_i$ is $s$-separated$\}$. If $\mu$ is a finite Borel measure on $X$, we also 
define the $\mu$-measurable separation index function: 
$\si^X_\mu(s) \coloneqq \min \{\si^{X'}_B(s) \colon X' \subset X \text{is an
$E_\rho$-invariant $\mu$-conull Borel set}\}$. Finally, 
define the $\mu$-measurable asymptotic separation index by
$\asi_\mu(X) \coloneqq \lim_{s \to \infty} \si^X_\mu(s)$.
\end{defn}

Note the Borel asymptotic separation index $\asi_B(X)$ of $X$ as defined in
\cite{CJMSTD} is the limit of its Borel separation index function:
\[\asi_B(X) = \lim_{s \to \infty} \si^X_B(s).\]

Note that $\si^X_B(s) \leq \sdim_B^X(s)$, though the inequality may be
strict since a sequence $\calU_0, \ldots, \calU_n$ witnessing $\si^X_B(s) = n$
may consist of finite sets that are not of uniformly bounded diameter. 
For example, by the results of \cite{CJMSTD}, a free Borel action of $\Z^k$ on a
standard Borel space $X$ with the usual word metric gives an extended metric
space $X$ with $\asdim_B(X) = k$, but
$\asi_B(X) = 1$. So if $k \geq 2$, then $\si^X_B(s) = 1 < k = \sdim_B^X(s)$
for all sufficiently large $s$.
% Indeed, there are
% hyperfinite Borel extended metric spaces of uniformly bounded volume growth for which $\asi_B(X)=1$ but $\sdim_B^X$ has exponential growth can be constructed
% by taking a hyperfinite Borel
% graph with $\asi_B(X) = 1$ on $X$, and then taking a countable collection of
% disjoint complete sections $(A_n)_{n \in \N}$ for the equivalence relation
% $E_\rho$, and then replacing each element
% of $A_n$ 
% with ``finite gadgets'' that are expander
% graphs of size $n$, as in \cite[Corollary 2.10]{DSX}. 
% We leave the details as an exercise for the reader. 

We finish by briefly noting how rescaling a metric changes its volume growth and dimension
function.
\begin{prop}\label{prop:rescaling}
  Suppose $(X,\rho)$ is an extended metric space and $c > 0$. If $c \rho$
  is the rescaling of the metric $\rho$ by $c$, then for all
  $r,s,d > 0$,
  \[
    \vol^{c\rho}(cr) = \vol^\rho(r)
    \quad\text{and}\quad
    \dim^{\rho}(s,d) = \dim^{c\rho}(cs,cd).
  \]
  Hence, if $(X,\rho)$ is a Borel locally finite extended metric
  space, then
  \[
    \dim_B^{\rho}(s,d) = \dim_B^{c\rho}(cs,cd).
  \]
\end{prop}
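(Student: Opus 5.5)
The plan is to unwind the definitions directly. Both identities rest on one observation: rescaling by $c>0$ multiplies every distance, distance between sets, and diameter by $c$. Precisely, for $x \in X$ and $A, B \subset X$ we have $(c\rho)(A,B) = c\,\rho(A,B)$ and $\diam^{c\rho}(A) = c\diam^{\rho}(A)$, with the usual convention $c \cdot \infty = \infty$. We also check that $c\rho$ is again an extended metric: symmetry, the triangle inequality, and the condition $\rho(x,x')=0 \iff x=x'$ are preserved because $c>0$.

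\emph{Volume growth.} We show that balls coincide as sets. For $x\in X$, $x' \in B^{c\rho}_{cr}(x)$ holds iff $c\rho(x,x') \le cr$, iff $\rho(x,x')\le r$, iff $x' \in B^\rho_r(x)$. So $B^{c\rho}_{cr}(x) = B^\rho_r(x)$. Taking the supremum of cardinalities over $x\in X$ gives $\vol^{c\rho}(cr) = \vol^\rho(r)$.

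\emph{Dimension.} We show that a sequence $\calU=(\calU_0,\dots,\calU_n)$ of families of subsets of $X$ is an $(s,d)$-cover for $\rho$ exactly when it is a $(cs,cd)$-cover for $c\rho$.
\begin{itemize}
\item The covering condition $\supp(\calU)=X$ does not involve the metric, so it is unchanged.
\item For distinct $A,B\in\calU_i$, we have $\rho(A,B)>s$ iff $c\rho(A,B)>cs$. So $\calU_i$ is $s$-separated for $\rho$ iff it is $cs$-separated for $c\rho$.
\item We have $\diam^\rho(A)\le d$ iff $\diam^{c\rho}(A)\le cd$. So the uniform diameter bounds also correspond.
\end{itemize}
The two sets of admissible $n$ are therefore identical, and their infima agree: $\dim^\rho(s,d)=\dim^{c\rho}(cs,cd)$.

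\emph{Borel case.} Suppose $(X,\rho)$ is a Borel locally finite extended metric space. Then $c\rho$ is a Borel function, so $(X,c\rho)$ is again a Borel extended metric space. It is locally finite by the ball identity above. Whether a family $\calU_i$ is Borel depends only on $\supp(\calU_i)$ and on the equivalence relation $E_{\calU_i}$, neither of which involves the metric. Hence the correspondence of covers restricts to a correspondence of Borel $(s,d)$-covers for $\rho$ with Borel $(cs,cd)$-covers for $c\rho$, and $\dim_B^\rho(s,d)=\dim_B^{c\rho}(cs,cd)$.

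There is no real obstacle here. The only points needing care are the convention $c\cdot\infty=\infty$ for extended metrics, which preserves all the strict and non-strict inequalities used, and noting that Borelness of families is metric-independent.
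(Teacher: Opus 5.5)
Your proposal is correct and follows the paper's proof: the paper likewise notes that $B^{c\rho}_{cr}(x) = B^\rho_r(x)$, and that diameters and separations scale by $c$. Hence $(s,d)$-covers for $\rho$ are exactly the $(cs,cd)$-covers for $c\rho$, and your remarks that Borelness of families and local finiteness are unaffected by rescaling just make explicit what the paper leaves implicit.
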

\begin{proof}
This follows since $B_{cr}^{c \rho}(x) = B_r^\rho(x)$, and a set has $c\rho$-diameter $cd$
iff it has $\rho$-diameter $d$ and a family of sets is $cs$-separated for $c
\rho$ iff it is $s$-separated for $\rho$.
\end{proof}

We note one consequence of rescaling: if $\alpha > 0$, $C > 0$, $b > 1$, and $\vol^{\rho}(r) \leq C
b^{r^\alpha}$ for all $r \in [0,\infty)$, then $\vol^{c \rho}(r) \leq C
b^{(r/c)^\alpha} = C \big(b^{c^{-\alpha}}\big)^{r^\alpha}$. Hence, by
rescaling the metric, we can change the base in this exponential
volume growth bound to be any real number greater than $1$. Hence, the base
is unimportant in our results, and we typically use the base of the natural
logarithm $e$ in the statement of our theorems.

\section{F{\o}lner layering and Ball carving}
\label{sec:ball-carving}

The key idea we use to analyze the dimension growth of spaces of subexponential growth will be a certain way of decomposing such
a space inductively using finitely many maximal $r$-separated disjoint
collections of $(r,\ve)$-F{\o}lner sets. These ideas originate from
ball-carving algorithms from distributed computing in theoretical computer
science (see \cite[Theorem 1.10]{R}). The following definition will be key
to this analysis:

\begin{defn}
\label{defn:Folner-layering}
  Let $r,\ve > 0$.
  An \define{$(r,\ve)$-F{\o}lner layering} of an extended metric space $X$
  is a finite sequence $\calF_0, \ldots, \calF_{n-1}$ of families with associated subspaces 
  $X_0 = X$ and $X_{i+1} \coloneqq X_i \setminus B_r(\supp(\calF_i))$
  for $i<n$, so that each $\calF_i$ has uniformly bounded diameter, and so that for all $i<n$, 
  \begin{enumerate}

  \item $\calF_i \subset \powset(X_i)$ is a family of $(r,\ve)$-F{\o}lner
  subsets of $X_i$.

  \item $\calF_i$ is $r$-separated, 

  \item $B_r(\supp(\calF_0,\ldots,\calF_{n-1})) = X$.

  \end{enumerate}
  A \define{$\ve$-F{\o}lner layering} of $X$ is an $(1,\ve)$-F{\o}lner layering. 
\end{defn}

In item (1) in \Cref{defn:Folner-layering} we emphasize that we mean a F{\o}lner subset of the
subspace $X_i$ and not a F{\o}lner subset of $X$. That is,
for all $A \in \calF_i$, $A \subset X_i$, and $|B^{X_i}_r(A)| \leq (1 + \ve)
|A|$.

Note that the
union $\calF_0 \cup \ldots \cup \calF_{n-1}$ of all the sets in an
$(s,\ve)$-F{\o}lner layering is
$s$-separated, since each $\calF_i$ is $s$-separated and $\calF_j$ is
disjoint from $B_s(\supp(\calF_i))$ for $j > i$ by the definition of $X_j$
and the condition that $\calF_j \subset \powset(X_j)$.
We will eventually use these $s$-separated families coming from F{\o}lner
layerings to make $(s,d)$-covers which we use to bound the dimension growth
of metric spaces of subexponential growth.

Our first goal will be to show that every Borel extended metric space of
subexponential volume growth has an $(r,\ve)$-F{\o}lner layering for every
$r,\ve > 0$.
It will suffice to prove theorems just in the case
$r = 1$, and then by rescaling the metric we can obtain similar results for
all $r$. Hence most of our results below will just be
about $\ve$-F{\o}lner layerings.

We begin with a well known proposition showing some ball must be an
$\ve$-F{\o}lner set in any space of uniformly subexponential growth. 

\begin{prop}\label{prop:subexp-Folner-balls}
  Suppose $(X,\rho)$ is an extended metric space, $\ve > 0$, and $r$ is a positive integer
  such that $\vol^X(r+1) \leq (1 + \ve)^{r+1}$. Then for every
  $x \in X$, there is an
  $\ve$-F{\o}lner set $F$ containing $x$ such that $F \subset B_r(x)$.
\end{prop}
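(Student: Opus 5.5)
Consider the nested balls $B_0(x) \subset B_1(x) \subset \cdots \subset B_{r}(x)$ and take $F = B_k(x)$ for a suitable $k \in \{0, \ldots, r\}$. Every such $F$ contains $x$, lies inside $B_r(x)$, and is nonempty and finite. Finiteness holds because $|B_r(x)| \le \vol(r+1) < \infty$ by hypothesis.

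The first step is the observation $B_1(B_k(x)) \subset B_{k+1}(x)$, which is just the triangle inequality. Consequently, if $B_k(x)$ is not $\ve$-F{\o}lner, then
\[
|B_{k+1}(x)| \ge |B_1(B_k(x))| > (1+\ve)|B_k(x)|.
\]

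The second step argues by contradiction: suppose none of $B_0(x), \ldots, B_r(x)$ is $\ve$-F{\o}lner. Chaining the strict inequality above for $k = 0, \ldots, r$, and starting from $|B_0(x)| = 1$, gives
\[
|B_{r+1}(x)| > (1+\ve)^{r+1}|B_0(x)| = (1+\ve)^{r+1}.
\]
This contradicts $|B_{r+1}(x)| \le \vol^X(r+1) \le (1+\ve)^{r+1}$.

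Hence some $B_k(x)$ with $k \le r$ is $\ve$-F{\o}lner, and we take $F$ to be that ball. There is no real obstacle here. The only points needing care are the strictness of the inequalities, so that the contradiction is strict, and the indexing: there are $r+1$ ratios, one for each $k = 0, \ldots, r$, which is exactly what the exponent $r+1$ in the hypothesis accounts for.
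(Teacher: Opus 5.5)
Your proof is correct and follows essentially the same approach as the paper: both consider the nested balls $B_0(x) \subset \cdots \subset B_{r+1}(x)$, use $B_1(B_k(x)) \subset B_{k+1}(x)$, and conclude that some ratio $|B_{k+1}(x)|/|B_k(x)|$ is at most $1+\ve$. The paper phrases this as a telescoping product of the $r+1$ ratios bounded by $\vol(r+1) \le (1+\ve)^{r+1}$, while you chain the strict inequalities to reach a contradiction; the two are the same argument.
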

\begin{proof}
  \[
    \frac{|B_1(x)|}{|B_0(x)|}
    \frac{|B_2(x)|}{|B_1(x)|}
    \cdots
    \frac{|B_{r+1}(x)|}{|B_r(x)|}
    = \frac{|B_{r+1}(x)|}{|B_0(x)|}
    \leq \vol(r+1)
    \le (1 + \ve)^{r+1},
  \]
  so there is some $s \le r$ such that
  $\frac{|B_{s+1}(x)|}{|B_s(x)|} \le 1 + \ve$. So $F = B_s(x)$ is the desired
  $\ve$-F{\o}lner set.
\end{proof}

Note that the condition $\vol^X(r+1) \leq (1 + \ve)^{r+1}$ will hold for
sufficiently large $r$ if $X$ has subexponential volume growth.
Now we can show that a Borel $\ve$-F{\o}lner layering exists in a space
with this volume growth bound:
\begin{lem}\label{layering-existence}
  Suppose $\ve > 0$, $r$ is a fixed positive integer, and $X$ is a locally finite Borel extended metric
  space such that $\vol^X(r+1) \le (1 + \ve)^{r+1}$.
  Then there is a Borel $\ve$-F{\o}lner layering
  of $X$ by sets of diameter uniformly bounded by
  $2r$.
\end{lem}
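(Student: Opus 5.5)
We will build the layering in finitely many rounds, using a Borel proper coloring of a bounded-degree intersection graph to schedule the rounds. First, let $G$ be the Borel graph on $X$ in which distinct $x,y$ are adjacent if $\rho(x,y) \leq 2r+1$. By local finiteness and the bound $\vol(2r+1)<\infty$, $G$ has bounded degree. Note that $\vol(r+1)$ finite does not by itself bound $\vol(2r+1)$. If uniform boundedness is not available, we instead use that $G$ is locally finite, so it has a countable Borel coloring by \cite[Proposition 4.6]{KST99}. We then run the argument below over countably many colors; that would give a countable rather than finite layering, so we will in fact need a finite coloring. Fix a Borel proper coloring $c\colon X \to \{0,\ldots,k-1\}$ of $G$, which exists since $G$ has degree at most $\vol(2r+1)-1$. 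Equivalently, any two points of the same color are at distance $>2r+1$.

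The rounds are defined inductively for $i<k$. Set $X_0 = X$. Given the Borel subspace $X_i$, let $x\in X_i$ with $c(x)=i$. The subspace $X_i$ still satisfies $\vol^{X_i}(r+1) \leq \vol^X(r+1) \le (1+\ve)^{r+1}$. Hence by \Cref{prop:subexp-Folner-balls} applied in $X_i$, some ball $B^{X_i}_{s}(x)$ with $s \le r$ is $\ve$-F{\o}lner in $X_i$, and we choose the least such $s=s(x)$. This choice is Borel since it is defined by a local rule. We let $\calF_i$ consist of the sets $F_x = B^{X_i}_{s(x)}(x)$ for $x\in X_i$ of color $i$, and set $X_{i+1} = X_i \setminus B_1(\supp(\calF_i))$.

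We then verify the conditions. Each $F_x$ has diameter at most $2r$. Two sets $F_x,F_y$ with $c(x)=c(y)$ satisfy $\rho(F_x,F_y) \ge \rho(x,y)-2r > 1$, so $\calF_i$ is $1$-separated. Each $F_x$ is $\ve$-F{\o}lner in $X_i$ by construction. Borelness follows from local rules and \cite[Lemma 5.17]{P}.

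The remaining point, and the main one, is the covering condition (3), $B_1(\supp(\calF_0,\ldots,\calF_{k-1}))=X$. Let $x\in X$ have color $i$. If $x\in X_i$, then $x\in F_x \subset \supp(\calF_i)$. Otherwise $x\notin X_i$, so $x\in B_1(\supp(\calF_j))$ for some $j<i$. Either way $x$ is covered, and the sets $X_j$ are exactly as in \Cref{defn:Folner-layering}.

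The obstacle is therefore only the finiteness of the coloring, which requires a uniform degree bound. Our plan is to derive $\vol(2r+1)<\infty$ from the paper's standing assumption of uniformly bounded volume growth. If only $\vol(r+1)$ is assumed finite, we instead take $G$ to have edges at distance $\le 2r+1$ and bound its degree by covering $B_{2r+1}(x)$ using a maximal $(r+1)$-separated net. If that fails, we use the countable coloring. In that case each point is still covered at a stage bounded by its own color, but the sequence of families is infinite.
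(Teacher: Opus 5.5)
\textbf{There is a genuine gap: the proof needs a finite coloring that the hypotheses do not supply.} Your scheduling argument is sound once you have a \emph{finite} Borel proper coloring of the graph $G$ (edges at distance $\le 2r+1$). Given such a coloring, your separation estimate, F{\o}lner property and covering check are all correct. But nothing in the lemma provides such a coloring, and you flag this yourself without closing it.

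The hypotheses are exactly local finiteness and the bound on $\vol(r+1)$; there is no standing assumption of uniformly bounded volume growth. For a general extended metric, a bound on $\vol(r+1)$ controls neither $\vol(2r+1)$ nor even the chromatic number of $G$. For example, let $X$ be the disjoint union, at mutual distance $\infty$, of finite sets $Y_k$ ($k\ge 1$) with $|Y_k|=k$ and all distinct points of $Y_k$ at distance exactly $2r+1$. Then $X$ is locally finite and $B_{r+1}(x)=\{x\}$ for every $x$, so $\vol(r+1)=1\le(1+\ve)^{r+1}$. Yet $G$ restricted to $Y_k$ is complete, so $G$ has no finite proper coloring at all.

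Your fallbacks do not help. The net argument works for geodesic (e.g.\ graph) metrics, where intermediate points give $|B_{2r+1}(x)|\le\vol(r)\vol(r+1)$. In the example, however, a maximal $(r+1)$-separated subset of $B_{2r+1}(x)$ is all of $Y_k$. The countable-coloring fallback yields an infinite sequence of families, which is not a layering in the sense of \Cref{defn:Folner-layering}. So what you have proved is the lemma under the extra hypothesis $\vol(2r+1)<\infty$. That does hold wherever the paper applies the lemma, but it is a weaker statement. In the example the lemma holds trivially with one layer of singletons, so the failure lies in your fixed-radius coloring schedule, not in the statement.

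\textbf{The missing idea is to get separation from maximality rather than a coloring, and to bound the number of rounds by counting.} At round $i$, take $\calF_i$ to be a Borel maximal $1$-separated family of $\ve$-F{\o}lner subsets of $X_i$ of diameter at most $2r$. This is a Borel maximal independent set in the graph on such sets where $A,B$ are adjacent iff $\rho(A,B)\le 1$. That graph is locally finite, since every neighbor of $A$ lies inside the finite set $B_{2r+1}(A)$. So such a family exists by \cite{KST99} with no uniform degree bound. Then set $X_{i+1}=X_i\setminus B_1(\supp(\calF_i))$.

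For $x\in X_i$, \Cref{prop:subexp-Folner-balls} applied in $X_i$ gives an $\ve$-F{\o}lner $A\subseteq B^{X_i}_r(x)$. Maximality forces $A$ to meet $B_1(\supp(\calF_i))$, so $|B_r(x)\cap X_{i+1}|<|B_r(x)\cap X_i|$. Since $|B_r(x)|\le\vol(r)\le\vol(r+1)<\infty$, a point surviving to $X_{\vol(r)}$ would give a strictly decreasing sequence of $\vol(r)+1$ positive integers bounded by $\vol(r)$. Hence $X_{\vol(r)}=\emptyset$, and $\calF_0,\ldots,\calF_{\vol(r)-1}$ is the required layering. This uses only the hypothesis on $\vol(r+1)$, and it also gives fewer layers than your schedule ($\vol(r)$ rather than up to $\vol(2r+1)$).
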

\begin{proof}
  Set $X_0 = X$. Given $X_i$, let $\calF_i$ be
  a Borel maximal $1$-separated family of
  $\ve$-F{\o}lner sets in $X_i$ of diameter bounded by $2r$, and then let
  $X_{i+1} = X_i \setminus B_1(\supp(\calF_i))$.
 To see that such a $\calF_i$ exists, consider the Borel graph on 
 all $2r$-bounded $\ve$-F{\o}lner subsets of $X_i$ where distinct $A, B$ are
 adjacent if $\rho(A,B)\leq 1$. This graph is locally finite since $\rho$ is
 locally finite. Hence it has a Borel maximal independent set by
 \cite[Prop 4.2, 4.5]{KST99}.

  We claim that after $n = \vol(r)$ steps of this process, the $1$-balls around the sets
  $\calF_0, \ldots, \calF_{n-1}$ cover the space: $B_1(\supp(\calF_0, \ldots,\calF_{n-1})) =
  X$, and hence $\calF_0, \ldots, \calF_{n-1}$ is the desired $\ve$-F{\o}lner
  layering.
  Since $X_i$ is a subspace of $X$,
  its volume growth is bounded by 
  $\vol^{X_i}(r+1) \le (1 + \ve)^{r+1}$. Hence, by
  \Cref{prop:subexp-Folner-balls} applied to the space $X_i$, every
  $x \in X_i$ is contained in some $\ve$-F{\o}lner set $A \subset
  B^{X_i}_r(x)$. Maximality of $\calF_i$ implies that $A$ meets $B_1^{X_i}(F)$
  for some $F \in \calF_i$. Hence, since $X_{i+1} = X_i \setminus
  B_1(\supp(\calF_i))$, we have shown
  \[
  |B_{r}(x) \cap X_{i+1}| < |B_{r}(x) \inters X_i|
  \]
  for every $x \in X_i$. This implies $X_n$ is empty since if $x \in X_n$, then 
  $|B_r(x) \inters X_i|$, for $0 \leq i \leq n$, would be a decreasing sequence of
  positive integers, but $|B_r(x) \inters X_0| = |B_r(x)| \leq \vol(r) = n$. 
\end{proof}

As a corollary, every Borel extended metric space of uniformly bounded
subexponential volume growth has F{\o}lner layerings, by a rescaling argument. We will use this fact in
\Cref{sec:folner-tiling}.

\begin{cor}\label{cor:subexp-layering}
Suppose $(X,\rho)$ is a Borel extended metric space with uniformly bounded
subexponential volume growth. Then for every $r,\ve > 0$, $X$ has a Borel
$(r,\ve)$-F{\o}lner layering.
\end{cor}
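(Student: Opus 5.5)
The plan is to reduce to \Cref{layering-existence} by rescaling the metric so that the separation scale $r$ becomes $1$. Fix $r,\ve>0$ and put $\rho' = \rho/r$. By \Cref{prop:rescaling}, $\vol^{\rho'}(t) = \vol^{\rho}(rt)$ for every $t$. So $(X,\rho')$ is again a Borel extended metric space with uniformly bounded volume growth. Its growth is still subexponential, because $\log \vol^{\rho'}(t) = \log\vol^\rho(rt) \in o(rt) = o(t)$. It is also locally finite, since $\rho'$-balls are $\rho$-balls and uniformly bounded volume growth makes all balls finite.

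Next, since $\vol^{\rho'}$ is subexponential and $1+\ve > 1$, there is a positive integer $k$ with $\vol^{\rho'}(k+1) \le (1+\ve)^{k+1}$. This is exactly the remark following \Cref{prop:subexp-Folner-balls}. \Cref{layering-existence}, applied to $(X,\rho')$ with this $k$ and $\ve$, then gives a Borel $\ve$-F{\o}lner layering $\calF_0,\ldots,\calF_{n-1}$ of $(X,\rho')$ whose sets have $\rho'$-diameter at most $2k$.

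Finally, I will check that this same sequence of families is a Borel $(r,\ve)$-F{\o}lner layering of $(X,\rho)$. The check rests on the identity $B^{\rho'}_1(A) = B^{\rho}_r(A)$ for every $A \subset X$, which also holds inside any subspace. Consequently the subspaces $X_{i+1} = X_i \setminus B^{\rho'}_1(\supp(\calF_i))$ are the same as those defined with $B^\rho_r$, and the conditions transfer as follows:
\begin{itemize}
\item $1$-separation for $\rho'$ is $r$-separation for $\rho$;
\item being $(1,\ve)$-F{\o}lner in $(X_i,\rho')$ is being $(r,\ve)$-F{\o}lner in $(X_i,\rho)$;
\item the covering condition $B^{\rho'}_1(\supp(\calF_0,\ldots,\calF_{n-1}))=X$ becomes $B^\rho_r(\supp(\calF_0,\ldots,\calF_{n-1}))=X$;
\item the diameters are uniformly bounded by $2kr$ in $\rho$.
\end{itemize}
Borelness of the families does not depend on the metric, and $\rho'$ is Borel because $\rho$ is.

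I do not expect a real obstacle. The one point needing care is confirming that the hypotheses of \Cref{layering-existence} hold after rescaling, namely local finiteness and a positive \emph{integer} $k$ satisfying the volume bound; both follow from uniformly bounded subexponential growth as explained above.
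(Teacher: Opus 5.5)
Your proposal is correct and follows the paper's proof: rescale to $\rho' = r^{-1}\rho$, apply \Cref{layering-existence} to obtain a Borel $(1,\ve)$-F{\o}lner layering for $\rho'$, and transfer it back to $\rho$. The transfer works because $1$-balls, $1$-separation, the F{\o}lner condition and diameter bounds for $\rho'$ correspond exactly to their scale-$r$ versions for $\rho$. Your additional checks (local finiteness, the existence of the integer $k$, and $B^{\rho'}_1(A) = B^{\rho}_r(A)$ inside subspaces) are details the paper leaves implicit.
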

\begin{proof}
Let $\rho' = r^{-1}\rho$. This rescaled metric still has uniformly bounded
subexponential volume growth (see \Cref{prop:rescaling}). 
Hence, for some positive integer $d$,
$\vol^{\rho'}(d+1) \leq (1+\ve)^{d+1}$, and so 
by \Cref{layering-existence}, $(X,\rho')$ has a Borel $(1,\ve)$-F{\o}lner
layering with respect to $\rho'$. 
Since a family is $1$-separated with respect to $\rho'$ if and only if it is $r$-separated
with respect to $\rho$, and uniformly bounded diameters remain uniformly
bounded under rescaling, this layering is a Borel
$(r,\ve)$-F{\o}lner layering with respect to $\rho$.
\end{proof}

If we remove the sets in a F{\o}lner layering from a space $X$, we can bound the
volume growth of the subspace that remains in terms of the volume growth of $X$ as
follows:
\begin{lem}\label{layering-complement}
  Suppose $X$ is a locally finite extended metric space, and 
  $\calF_0, \ldots, \calF_{n-1}$ is an $\ve$-F{\o}lner
  layering of $X$ of diameter bounded by $d \geq 0$. Let $Y =
  X \setminus \supp(\calF_0, \ldots, \calF_{n-1})$. 
  Then for every finite set $A \subset Y$, we have $|A| \le \ve |B^X_{d+1}(A)|$.
  Hence, 
  \begin{equation}\label{eqn:volume-slowdown}
  \vol^{Y}(r) \leq \ve \vol^{X}(r + d +1)
  \end{equation} 
  for all $r \geq 0$.
\end{lem}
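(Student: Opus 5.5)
The plan is a charging argument: assign each point of $A$ to the layer that removed it, and charge it to a nearby Følner set of that layer. Since $Y$ is disjoint from every $\supp(\calF_i)$ but $B_1(\supp(\calF_0,\ldots,\calF_{n-1})) = X$, every $y \in Y$ has a least index $i$ with $y \in X_i \setminus X_{i+1}$. This $y$ lies in $X_i$, is not in $\supp(\calF_i)$, and has $\rho(y, F) \le 1$ for some $F \in \calF_i$. So $y \in \bndry^{X_i}_1(F)$, the external $1$-boundary of $F$ computed inside the subspace $X_i$.

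Define $\Phi(y) = F$ by choosing such an $F \in \calF_i$. Set $\calC = \{F : F = \Phi(y) \text{ for some } y \in A\}$. The counting then goes as follows.

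\begin{itemize}
\item The sets in $\calC$ are pairwise disjoint. Within one layer this is $1$-separation. Across layers, $\calF_j \subset \powset(X_j)$ and $X_j \cap B_1(\supp(\calF_i)) = \emptyset$ for $j > i$.
\item For a fixed $F \in \calC$ in layer $i$, every $y$ with $\Phi(y) = F$ lies in $\bndry^{X_i}_1(F)$.
\item The Følner condition in the subspace $X_i$ gives $|\bndry^{X_i}_1(F)| \le \ve |F|$, so $|\Phi^{-1}(F)| \le \ve|F|$.
\item Every point of $F$ is within $d+1$ of $A$, since $\diam F \le d$ and $\rho(y,F) \le 1$. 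Hence $F \subset B^X_{d+1}(A)$.
\end{itemize}

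Summing over $F \in \calC$ and using disjointness,
\[
|A| \le \sum_{F \in \calC} \ve |F| \le \ve \,|B^X_{d+1}(A)|.
\]

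For \eqref{eqn:volume-slowdown}, apply this to $A = B^Y_r(y)$. Its elements are within $r$ of $y$ in $\rho$, because $Y$ carries the restricted metric. So $B^X_{d+1}(A) \subset B^X_{r+d+1}(y)$, and therefore $|B^Y_r(y)| \le \ve \vol^X(r+d+1)$. Taking the supremum over $y$ finishes the proof; local finiteness only guarantees that these balls are finite.

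The one delicate step is making sure the boundary used is the boundary in $X_i$ rather than in $X$, since the Følner property is only assumed in $X_i$. The choice of the first layer $i$ with $y \notin X_{i+1}$ is exactly what guarantees $y \in X_i$. The disjointness across layers is the other point to check, but it follows directly from the definition of the $X_j$.
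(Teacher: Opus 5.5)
Your proof is correct and is essentially the paper's argument. You place each $y \in A$ in the $X_i$-relative boundary $\bndry^{X_i}_1(F)$ of some $F \in \calF_i$, bound that boundary by $\ve|F|$, and use that the relevant sets are pairwise disjoint and contained in $B^X_{d+1}(A)$. The only cosmetic difference is that you charge each point to one chosen $F$ and sum the F{\o}lner bound set by set, whereas the paper takes, for each layer $i$, all sets of $\calF_i$ whose $X_i$-boundary meets $A$ and applies the F{\o}lner bound to their disjoint union.
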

\begin{proof}
  Since $A \subset Y = X \setminus \supp(\calF_0, \ldots, \calF_{n-1})$ and $X =
  B_1(\supp(\calF_0, \ldots, \calF_{n-1}))$ by the definition of a F{\o}lner layering,
  every $y \in A$ is contained in $\bndry^{X_i}_1(C) = B^{X_i}_1(C)
  \setminus C$ for some $i<n$ and $C \in \calF_i$.
  Let $\calC_i = \{C \in
  \calF_i \colon \bndry^{X_i}_1(C) \inters A \neq \emptyset\}$ be the
  elements of $\calF_i$ whose $1$-boundary intersects $A$. So $A \subset
  \supp(\bndry_1^{X_0}(\calC_0), \ldots, \bndry_1^{X_{n-1}}(\calC_{n-1}))$.
  Since the
  sets in $\calF_i$ have diameter bounded by $d$, 
  $\supp(\calC_0, \ldots, \calC_{n-1}) \subset B^X_{d+1}(A)$. Note that
  $|\supp(\calC_0, \ldots, \calC_{n-1})| = |\supp(\calC_0)| + \ldots +
  |\supp(\calC_{n-1})|$ since the sets $\supp(\calC_i)$ are pairwise disjoint.
  Finally, note that since every $C \in \calC_i$ is an
  $\ve$-F{\o}lner set in $X_i$, and a finite union of disjoint $\ve$-F{\o}lner sets is
  $\ve$-F{\o}lner, $\supp(\calC_i)$ is an $\ve$-F{\o}lner subset of $X_i$. Hence,
  \begin{multline*}
  |A| \leq |\supp (\bndry_1^{X_0}(\calC_0))| + \ldots +
  |\supp(\bndry_1^{X_{n-1}}(\calC_{n-1}))| \leq \\
  \ve
  |\supp(\calC_0)| + \ldots + \ve |\supp(\calC_{n-1})| =
\ve
  |\supp(\calC_0,\ldots,\calC_{n-1})| \leq \ve |B^X_{d+1}(A)|.
  \end{multline*}

  Finally, \Cref{eqn:volume-slowdown} follows by applying
  the above to the sets $A = B^Y_r(x)$ for each
  $x$, since then $B^X_{d+1}(A) \subset B^X_{r + d + 1}(x)$, so $|B^Y_{r}(x)|
  \leq \ve |B^X_{r + d + 1}(x)|$.
\end{proof}  

We will use the above lemma to show that in a metric space of subexponential
growth, if we repeatedly remove F{\o}lner layerings from the space, 
after finitely many steps, the F{\o}lner layerings must cover
the entire space. Hence this gives an upper bound on the Borel dimension
$\dim_B^X(s,d)$ of Borel extended metric spaces of subexponential growth. 

\begin{lem}\label{prop:dim-estimate}
  Suppose $\ve > 0$ and $r, n$ are positive integers. 
  Then every locally finite Borel extended metric space $X$
  satisfying $\vol^X(r+1) \le (1 + \ve)^{r+1}$
  and $\vol^X((2r+1)n) < \frac{1}{\ve^n}$
  has $\dim_B^X(1,2r) < n$.
\end{lem}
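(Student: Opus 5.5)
We build the cover by iterating \Cref{layering-existence} and \Cref{layering-complement}. Set $Y_0 = X$. Given a Borel subspace $Y_j$, we check that $\vol^{Y_j}(r+1) \le \vol^X(r+1) \le (1+\ve)^{r+1}$, since volume growth can only decrease on passing to a subspace. So \Cref{layering-existence} gives a Borel $\ve$-F{\o}lner layering $\calF^j_0, \ldots, \calF^j_{m_j-1}$ of $Y_j$ by sets of diameter at most $2r$. We let $\calU_j = \calF^j_0 \cup \cdots \cup \calF^j_{m_j-1}$ and $Y_{j+1} = Y_j \setminus \supp(\calU_j)$.

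By the remark following \Cref{defn:Folner-layering}, $\calU_j$ is $1$-separated in $Y_j$. Since $Y_j \subset X$ carries the restricted metric, $\calU_j$ is also $1$-separated in $X$. Its diameters measured in $X$ agree with those in $Y_j$, so they are at most $2r$. It is also Borel, being a finite union of Borel families with pairwise disjoint supports. Hence, if we show $Y_n = \emptyset$, then $\calU_0, \ldots, \calU_{n-1}$ is a Borel $(1,2r)$-cover with $n$ colors, which gives $\dim^X_B(1,2r) \le n-1 < n$.

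To show $Y_n = \emptyset$, we apply \Cref{layering-complement} with $d = 2r$ at each step, which gives
\[
\vol^{Y_{j+1}}(t) \le \ve\, \vol^{Y_j}(t + 2r + 1).
\]
Iterating this $n$ times from $t = 0$ yields
\[
|B^{Y_n}_0(y)| \le \vol^{Y_n}(0) \le \ve^n \vol^X\bigl((2r+1)n\bigr) < 1.
\]
So every $0$-ball in $Y_n$ has fewer than one point. Since a $0$-ball around a point $y \in Y_n$ contains $y$, $Y_n$ must be empty. If some intermediate $Y_j$ is already empty, we simply stop and pad the cover with empty families.

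The main obstacle is bookkeeping rather than substance. First, \Cref{layering-complement} bounds the volume of $Y_{j+1}$ using the metric of $Y_j$ inside its own inequality. We must apply it with $Y_j$ in the role of $X$ and chain the bounds correctly, using the sup-over-balls form of the volume bound at each stage. Second, the separation and diameter bounds proved inside the subspace $Y_j$ must be checked to transfer to $X$; this holds because the metric on $Y_j$ is the restriction. Borelness of each $Y_j$ follows inductively, since each $Y_{j+1}$ is obtained from $Y_j$ by removing the Borel set $\supp(\calU_j)$.
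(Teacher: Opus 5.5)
Your proposal is correct and matches the paper's proof: you iterate \Cref{layering-existence} on the successive complements $Y_j$, take $\calU_j$ to be the union of the $j$th layering, and chain \Cref{layering-complement} with $d=2r$ to get $\vol^{Y_n}(0)\le \ve^n\vol^X((2r+1)n)<1$. The details you add are the ones the paper leaves implicit: the volume bound passing to subspaces, separation and diameter transferring from $Y_j$ to $X$, and Borelness.
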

\begin{proof}
  Let $Y_0 = X$. For each $i \geq 0$, by \Cref{layering-existence},
  let $\calF_i = (\calF_{i,0}, \ldots, \calF_{i,n_i-1})$ be a Borel $\ve$-F{\o}lner layering of $Y_{i}$ of diameter bounded
  by $2r$, and let $\calU_i = \calF_{i,0} \union \ldots \union \calF_{i,n_i-1}$ be
  the union of all the sets in this layering, so $\calU_i$ is $1$-separated and has
  diameter bounded by $2r$.
  Let $Y_{i+1}
  = Y_{i} \setminus \supp(\calU_i)$.  
  It suffices to show that $Y_n$ is empty,
  since then $\calU_0, \ldots, \calU_{n-1}$ is a Borel $(1,2r)$-cover of $X$.

  After deleting each layering, note that 
 $\vol^{Y_{i+1}}(t)\leq \ve\vol^{Y_i}(t+2r+1)$
  for every $t\geq0$, by applying 
  \Cref{layering-complement} to $Y_i$. Hence, by induction
 $\vol^{Y_n}(0) \leq \ve^n \vol^{Y_0}((2r+1)n) < 1$, and so $Y_n$ is empty.
\end{proof}

% Note that if $X$ is a space of subexponential growth, then for any $\ve > 0$,
% the two conditions $\vol(r+1) \le (1 + \ve)^{r+1}$ and $\vol((2r+1)n) <
% \frac{1}{\ve^c}$ will be satisfied for sufficiently large $r$, and any
% sufficiently large $n$ given $r$.

\begin{remark}
We remark that \Cref{prop:subexp-Folner-balls}, \Cref{layering-existence},
\Cref{layering-complement}, and
\Cref{prop:dim-estimate} together encapsulate the simple deterministic ball
carving algorithm from the theory of local algorithms originally introduced in
\cite{AGLP89}, see also \cite[Section 1.5]{R}. In that
language, a $(n,d)$-network decomposition of a graph is a cover of the
vertices by families $\calC_{0}, \ldots, \calC_{n-1}$ (called clusters) of
subsets where each
$\calC_i$ is $1$-separated, and has diameter uniformly bounded by $d$. In our language, this
is a $(1,d)$-cover with $n$ elements. 
The ball carving algorithm makes each cluster $\calC_i$ by 
sequentially starting at each point $x$ of the space and taking the least radius
$s$ for which the
ball $B_s(x)$ is $\ve$-F{\o}lner, as in
\Cref{prop:subexp-Folner-balls}. Then one
adds this set to the cluster and removes both it and the $1$-ball
around it from the space to make a collection of $1$-separated sets. This
cluster
corresponds to what we 
are calling a F{\o}lner layering, where each layer in our F{\o}lner layering
roughly corresponds to a single step in this clustering process which takes place at far enough
apart distance in the graph to not interfere. The ball carving algorithm then removes the points
in $\supp(\calC_i)$ from the
space, and does the same process again to make the next cluster $\calC_{i+1}$. Then the analysis in 
\Cref{layering-complement} and \Cref{prop:dim-estimate} shows a bound on the
total number of clusters made this way that are needed to cover the space.
\end{remark}

We record a corollary of the above, which comes from a careful choice of
$\ve$ in the above lemma.
% In particular, this gives a bound on the dimension of a Borel extended metric
% space from bounds on its growth:
\begin{cor}\label{cover-existence-discrete}
  If $X$ is a locally finite Borel extended metric space, satisfying
  \[
    \left(\vol(r+1)^{1/(r+1)} - 1 \right)^n \vol((2r+1)n) < 1,
  \]
  for $r, n \in \N$, then $\dim_B^X(1, 2r) < n$.
\end{cor}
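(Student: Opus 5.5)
The plan is to apply \Cref{prop:dim-estimate} with a specific choice of $\ve$. Set
\[
  \ve \coloneqq \vol(r+1)^{1/(r+1)} - 1,
\]
so that $\vol(r+1) = (1+\ve)^{r+1}$ holds with equality. This makes the first hypothesis of \Cref{prop:dim-estimate} automatic.

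With this $\ve$, the hypothesis of the corollary reads $\ve^n \vol((2r+1)n) < 1$. That is exactly $\vol((2r+1)n) < 1/\ve^n$, the second hypothesis of \Cref{prop:dim-estimate}. Then \Cref{prop:dim-estimate} gives $\dim_B^X(1,2r) < n$, as claimed.

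The only point requiring care is that \Cref{prop:dim-estimate} requires $\ve > 0$ and $r,n$ positive integers.

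If $\ve = 0$, then $\vol(r+1) = 1$, so every ball of radius $r+1$ is a singleton. Any two distinct points are then at distance greater than $r+1 \geq 1$. Hence the single family of all singletons is a Borel $1$-separated family of diameter $0$, which gives $\dim_B^X(1,2r) = 0$. This is $< n$ for $n \geq 1$.

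The degenerate cases $n = 0$ or $r = 0$ need a separate look, depending on whether $\N$ includes $0$.
\begin{itemize}
  \item If $n = 0$, the hypothesis reads $\vol(0) < 1$, which can only hold if $X$ is empty; then the conclusion holds with the empty cover.
  \item If $r = 0$ and $n \geq 1$, then $\ve = \vol(1) - 1$, and I would argue directly as in \Cref{prop:dim-estimate}, using singletons, which are trivially F{\o}lner when $1+\ve = \vol(1)$. Alternatively, I would simply note that $r$ is intended to be positive.
\end{itemize}

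I expect no real obstacle: the whole argument is the substitution of this $\ve$ plus the handling of edge cases.
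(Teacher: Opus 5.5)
Your proposal is correct and is essentially the paper's proof: substitute $\ve = \vol(r+1)^{1/(r+1)}-1$ into \Cref{prop:dim-estimate}, and when $\ve=0$ use the Borel $(1,0)$-cover by all singletons. The paper handles the remaining degenerate cases more quickly. If $\ve>0$ and $r=0$, then $\ve=\vol(1)-1\geq 1$ and $\vol(n)\geq 1$, so the hypothesis cannot hold and there is no need to rerun the layering argument with singletons. For $n=0$ the paper tacitly takes $X$ nonempty, so that $\vol(0)=1$. Your ``empty cover'' would not give $\dim_B^X(1,2r)<0$ anyway, since $\dim_B$ is an infimum over $\N$; this is a quirk of the statement rather than of your argument.
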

\begin{proof}
  \trivial{Allowing zero as a possible element of $\N$ causes no problem.
  With the usual convention $t^0=1$, if $n=0$, then the left hand side of
  the displayed hypothesis is $\vol(0)=1$, so the hypothesis forces $n>0$.
  Set $\ve=\vol(r+1)^{1/(r+1)}-1$. If $\ve=0$, then
  $\vol(r+1)=1$, so the family of all singletons is a Borel $(1,0)$-cover;
  hence $\dim_B^X(1,2r)=0<n$. If $\ve>0$ and $r=0$, then
  $\ve=\vol(1)-1\geq1$, and the left hand side of the hypothesis is at
  least $1$, a contradiction. Thus in the remaining case $r,n$ are positive
  integers and $\ve>0$, exactly as required by
  \Cref{prop:dim-estimate}.}
  This follows by applying \Cref{prop:dim-estimate} with $\ve \coloneqq \vol(r+1)^{1/(r+1)} - 1$.
\end{proof}

We record another corollary which uses a rescaling argument to bound dimension
growth for all separations, and allows real number parameters for both the
separation and diameter. 
% Note that $r$ must be an integer in \Cref{cover-existence-discrete} above
% since its proof ultimately uses \Cref{prop:subexp-Folner-balls} where the
% argument expanded a radius by $1$, $r$ many times.
\begin{lem}\label{cover-existence-real}
  Suppose $X$ is a locally finite Borel extended metric space. Then for all $s > 0$,
  $d \geq 2s$, and positive integers $n$, if
  $\left(\vol(d)^{2s/d} - 1\right)^n \vol(2dn) < 1$
  then $\dim_B^X(s, d) < n$.
\end{lem}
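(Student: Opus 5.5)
The plan is to reduce to \Cref{cover-existence-discrete} by rescaling the metric so that the separation parameter becomes $1$. Concretely, set $\rho' = s^{-1}\rho$ and let $\vol'$ denote the volume growth of $(X,\rho')$. By \Cref{prop:rescaling}, $\vol'(t) = \vol(st)$ for all $t$, and
\[
\dim_B^{\rho}(s,d) = \dim_B^{\rho'}(1,d/s).
\]
So it suffices to show $\dim_B^{\rho'}(1,d/s) < n$. Let
\[
r \coloneqq \lfloor d/(2s) \rfloor .
\]
Since $d \geq 2s$, $r$ is a positive integer, and $2r \leq d/s$. Because $\dim_B$ is decreasing in the diameter parameter, it is enough to prove $\dim_B^{\rho'}(1,2r) < n$. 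By \Cref{cover-existence-discrete}, this follows once we verify
\[
\left(\vol'(r+1)^{1/(r+1)} - 1\right)^n \vol'((2r+1)n) < 1 .
\]

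The key step is to compare this expression term by term with the hypothesis $\left(\vol(d)^{2s/d} - 1\right)^n \vol(2dn) < 1$, using that $\vol$ is nondecreasing and at least $1$. There are three estimates.
\begin{enumerate}
\item From $r \leq d/(2s)$ and $d \geq 2s$ we get $s(r+1) \leq d/2 + s \leq d$, hence $\vol'(r+1) = \vol(s(r+1)) \leq \vol(d)$.
\item From $r+1 > d/(2s)$ we get $1/(r+1) < 2s/d$. Since $\vol(d) \geq 1$, this gives $\vol'(r+1)^{1/(r+1)} \leq \vol(d)^{1/(r+1)} \leq \vol(d)^{2s/d}$.
\item We have $s(2r+1)n \leq (d+s)n \leq 2dn$, so $\vol'((2r+1)n) \leq \vol(2dn)$.
\end{enumerate}
All factors are nonnegative, so these three estimates show the left-hand side in the rescaled metric is at most $\left(\vol(d)^{2s/d}-1\right)^n \vol(2dn) < 1$.

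The only delicate point is bookkeeping: choosing $r$ so that $2r \leq d/s$ (to control the diameter) while simultaneously $r+1 > d/(2s)$ (to control the exponent), with $s(r+1)$ and $s(2r+1)n$ staying within $d$ and $2dn$ respectively. The floor choice above handles all of these at once.

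One should also remark that if some volume is infinite the hypothesis is only meaningful when it forces finiteness. If $\vol(d)^{2s/d} = 1$, then $\vol(d) = 1$ and all singletons already form a Borel $(s,0)$-cover, so the conclusion is trivial. Otherwise all quantities involved are finite positive reals and the comparison above applies. Finally, Borelness is preserved, since a Borel cover for $\rho'$ is literally a Borel cover for $\rho$ with the rescaled parameters.
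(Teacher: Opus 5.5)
Your proposal is correct and follows essentially the same route as the paper: take $r=\lfloor d/(2s)\rfloor$, compare the three quantities using that $\vol$ is nondecreasing and at least $1$, rescale by $s^{-1}$, and invoke \Cref{cover-existence-discrete} together with monotonicity of $\dim_B$ in the diameter parameter. The differences are only bookkeeping: your slightly sharper estimates $s(r+1)\leq d/2+s$ and $s(2r+1)n\leq (d+s)n$, and your separate treatment of the degenerate case $\vol(d)=1$, which the paper handles implicitly inside \Cref{cover-existence-discrete}.
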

\begin{proof}
  Define $r := \left\lfloor \frac{d}{2s} \right\rfloor$, and
  note that $r \geq 1$. So $s(r+1) \leq s(2r) \leq d$,
  and $r + 1 \geq \frac{d}{2s}$,
  hence
  \[
    \left(\vol(s(r+1))^{1/(r+1)} - 1\right)^n
    \le \left(\vol(d)^{2s/d} - 1\right)^n,
  \]
  since $\vol$ is nondecreasing. 
  We also have $s(2r+1)n \leq s(4r)n \leq 2dn$, and so
  \[
    \vol(s(2r+1)n) \le \vol(2dn).
  \]
  And hence 
  \[\left(\vol(s(r+1))^{1/(r+1)} - 1\right)^n \vol(s(2r+1)n) < 1.\]
  If we rescale the metric $\rho$ by the factor $s^{-1}$, then 
  $\vol^{s^{-1}\rho}(r) = \vol^{\rho}(sr)$ by 
  \Cref{prop:rescaling}. So by \Cref{cover-existence-discrete}
  \[\dim_B^{\rho}(s,d) = \dim_B^{s^{-1}\rho}(1,d/s) 
  \leq \dim_B^{s^{-1}\rho}(1,2r) < n.\]
\end{proof}

We note that this result already gives a simpler proof of the Bernshteyn-Yu
result \cite[Corollary 3.16]{BY} that Borel metric spaces of polynomial growth
$O(r^k)$ have Borel asymptotic dimension at most $k$, and hence their Theorem from
\cite{BY} that every Borel graph of polynomial growth is hyperfinite
\cite[Corollary 1.16]{BY}.

\begin{cor}\label{polynomial-growth-dimension}
  Suppose $X$ is a Borel extended metric space and $\vol^X(r) \in O(r^k)$ for
  $k \in \N$. Then 
  $\dim_B^X(s,s^{k+2})\leq k$ for all sufficiently large $s > 0$, hence
  $\asdim_B(X) \leq k$, since $\dim_B^X(s,d)$ is decreasing in $d$ and
  increasing in $s$. 
\end{cor}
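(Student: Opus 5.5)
The plan is to apply \Cref{cover-existence-real} with $d = s^{k+2}$ and $n = k+1$, and verify its hypothesis
\[
\left(\vol(d)^{2s/d} - 1\right)^{k+1} \vol(2d(k+1)) < 1
\]
for all sufficiently large $s$. This yields $\dim_B^X(s,s^{k+2}) < k+1$, that is, $\dim_B^X(s,s^{k+2}) \leq k$. Note that $d = s^{k+2} \geq 2s$ once $s \geq 2$, so the lemma applies. The statement about $\asdim_B$ then follows formally. For each fixed $s$, monotonicity in $d$ gives $\lim_{d\to\infty}\dim_B^X(s,d) \leq \dim_B^X(s,s^{k+2}) \leq k$ for $s$ large. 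Taking $s \to \infty$ (monotone in $s$) gives $\asdim_B(X) \leq k$.

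For the estimate, fix $C$ with $\vol(r) \leq C r^k$ for all large $r$. Write $\vol(d)^{2s/d} = \exp\bigl(\tfrac{2s}{d}\log \vol(d)\bigr)$. The exponent is
\[
\frac{2s}{d}\log\vol(d) \leq \frac{2s(\log C + k(k+2)\log s)}{s^{k+2}} = O\!\left(\frac{\log s}{s^{k+1}}\right),
\]
which tends to $0$. Using $e^t - 1 \leq 2t$ for small $t \geq 0$, we get
\[
\vol(d)^{2s/d} - 1 = O\!\left(\frac{\log s}{s^{k+1}}\right),
\]
and therefore
\[
\left(\vol(d)^{2s/d} - 1\right)^{k+1} = O\!\left(\frac{(\log s)^{k+1}}{s^{(k+1)^2}}\right).
\]
Meanwhile
\[
\vol(2d(k+1)) \leq C\,(2(k+1))^k s^{k(k+2)} = O\!\left(s^{k^2+2k}\right).
\]
The product is
\[
O\!\left(\frac{(\log s)^{k+1}}{s^{(k+1)^2 - k^2 - 2k}}\right) = O\!\left(\frac{(\log s)^{k+1}}{s}\right),
\]
which tends to $0$. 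In particular it is $<1$ for all sufficiently large $s$.

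The only delicate point is the exponent bookkeeping. The choice $d = s^{k+2}$ (rather than $s^{k+1}$) is exactly what leaves a spare factor of $s$ to absorb the logarithmic losses. With $d = s^{k+1}$ one would only get $(\log s)^{k+1} s^{k^2+k}/s^{k(k+1)} = (\log s)^{k+1}$, which fails. So I expect the main care to be in confirming that the exponent $(k+1)^2$ from the Følner term strictly beats $k(k+2) = (k+1)^2 - 1$ from the volume term. The rest is routine. The degenerate case $\vol \equiv 1$ (that is, $k = 0$ with bounded volume) is covered by the same estimate, or trivially by singletons.
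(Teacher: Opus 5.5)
Your proposal is correct and follows the paper's proof essentially step for step. Like the paper, you apply \Cref{cover-existence-real} with $d=s^{k+2}$ and $n=k+1$. You bound $\vol(d)^{2s/d}-1$ by $O(s^{-k-1}\log s)$ and $\vol(2d(k+1))$ by $O(s^{k(k+2)})$. The product is then $O\bigl((\log s)^{k+1}/s\bigr)\to 0$, which gives $\dim_B^X(s,s^{k+2})\le k$ for large $s$, and $\asdim_B(X)\le k$ follows by monotonicity.
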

\begin{proof}
  Let $s$ be sufficiently large, let $d \coloneq s^{k+2}$,
  and note that $d \geq 2s$ for sufficiently large $s$.
  We will apply \Cref{cover-existence-real} with $n = k+1$. 
  Since $\vol(r)\in O(r^k)$, 
  taking
  $k$ as a fixed constant, and computing growth as a function of $s$ we have:
  \[
    \frac{2s}{d}\log \left(\vol(d) \right) = \frac{2s}{d}\log \left(\vol(
    s^{k+2}) \right) \in O(s^{-k-1}\log s ) \to 0 \text{ as $s \to \infty$}.
  \]
  Hence
  $\vol(d)^{2s/d} =\exp(O(s^{-k-1}\log s))$, and since $\exp(x) \leq 3x+1$ for
  sufficiently small $x \in (0,1)$, we have $\vol(d)^{2s/d}-1 =O(s^{-k-1}\log
  s)$.

  Note also
    $\vol(2d(k+1))=\vol(O(s^{k+2})) \in O(s^{k(k+2)})$.
  Therefore
  \begin{multline*}
    \left(\vol(d)^{2s/d}-1\right)^{k+1}
      \vol(2d(k+1))
    = O\left( \left(s^{-k-1}\log s \right)^{k+1} s^{k(k+2)} \right) =
    O\left(
      s^{-(k+1)^2+k(k+2)}(\log s)^{k+1}
    \right)\\
    =O\left(\frac{(\log s)^{k+1}}{s}\right)
    \to 0
  \end{multline*}
  as $s \to \infty$.
  Thus, by \Cref{cover-existence-real},
    $\dim_B^X(s,d)<k+1$
  for all sufficiently large $s$ and hence
  $\dim_B^X(s,d)\leq k$.
\end{proof}

We can now bound the Borel dimension growth of any Borel extended metric space
of growth $\vol(r) \in \exp(O(r^{\alpha}))$ for some $\alpha \in
(0,1)$, which gives Theorem D from the introduction:

\begin{thm}\label{dimension-subexp}
  Suppose $X$ is a Borel extended metric space of volume growth bounded by 
  \[\vol(r) \in \exp(O(r^{\alpha}))\]
  for some $\alpha \in (0,1)$.
  Then there exists a constant $C > 0$ such that 
  \[\dim^X_B(s, Cs^{1/(1-\alpha)}) \in O(s^{\alpha/(1-\alpha)^2}).\]
\end{thm}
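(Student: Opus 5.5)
The plan is to apply \Cref{cover-existence-real} with a diameter $d = Cs^{1/(1-\alpha)}$ and a number of colors $n$ of order $s^{\alpha/(1-\alpha)^2}$, and then check that the hypothesis
\[
\left(\vol(d)^{2s/d}-1\right)^n\vol(2dn)<1
\]
holds for all sufficiently large $s$. We may assume $\vol(r)\le \exp(Kr^\alpha)$ for all $r\ge 1$; for small $r$ this costs only a multiplicative constant, which does not matter. We fix $n$ to be the ceiling of $As^{\alpha/(1-\alpha)^2}$, where $A$ is a constant chosen at the end.

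First, we choose $C$ so that the base of the $n$th power is a small constant. We have $\frac{2s}{d}\log\vol(d)\le 2sKd^{\alpha-1}=2KC^{\alpha-1}s\cdot s^{-1}=2KC^{\alpha-1}$, since $d^{1-\alpha}=C^{1-\alpha}s$. Choosing $C$ large makes this exponent at most some small $\delta$, so $\vol(d)^{2s/d}-1\le e^\delta-1\le 1/e$. Hence $\left(\vol(d)^{2s/d}-1\right)^n\le e^{-n}$. For large $s$ we also have $d\ge 2s$, because $1/(1-\alpha)>1$.

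Second, we bound the other factor:
\[
\log\vol(2dn)\le K(2dn)^\alpha = K2^\alpha C^\alpha s^{\alpha/(1-\alpha)}n^\alpha .
\]
So it suffices to have $n>K'\,s^{\alpha/(1-\alpha)}n^\alpha$, i.e. $n^{1-\alpha}>K's^{\alpha/(1-\alpha)}$, i.e. $n>K''s^{\alpha/(1-\alpha)^2}$. Choosing $A>K''$ then makes the product at most $\exp\left(-n+K's^{\alpha/(1-\alpha)}n^\alpha\right)<1$. \Cref{cover-existence-real} then gives $\dim_B^X(s,d)<n=O(s^{\alpha/(1-\alpha)^2})$ for all large $s$. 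For bounded $s$, the bound $\dim_B\le\vol(s)-1$ from the earlier proposition shows that $\dim_B$ is bounded there, so the $O$-statement holds.

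The main step is the choice of scales. The term $\vol(d)^{2s/d}-1$ has to be kept below a constant less than $1$, not merely bounded, and this forces $d\gtrsim s^{1/(1-\alpha)}$. The exponent then comes from balancing $n$ against $(dn)^\alpha$, and I expect the only delicate part to be tracking the constants in $\exp(O(r^\alpha))$, including the regime of small $r$. I do not anticipate a genuine obstruction.
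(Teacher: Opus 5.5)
Your proposal is correct and follows essentially the same route as the paper: apply \Cref{cover-existence-real} with $d = Cs^{1/(1-\alpha)}$. You choose $C$ so that $\vol(d)^{2s/d}-1$ is at most a fixed constant below $1$, and take the number of colors of order $s^{\alpha/(1-\alpha)^2}$ so that this constant raised to the $n$th power beats $\vol(2dn)$. The paper packages the same balancing as the choices $\delta := b^{2/C^{1-\alpha}}-1<1$ and $\delta\, b^{2^\alpha C^\alpha/D^{1-\alpha}}<1$, with $c(s)=\lceil Ds^{\alpha/(1-\alpha)^2}\rceil+1$.
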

\begin{proof}
  Fix $b > 1$ such that $\vol(r) \leq b^{r^\alpha}$ for sufficiently large $r$.
  Fix $C > 0$ so that $\delta := b^{2/C^{1-\alpha}} - 1 < 1$,
  and fix $D > 0$ so that $\delta b^{2^\alpha C^\alpha/D^{1-\alpha}} < 1$.
  Define
  \[d(s) \coloneqq C s^{1/(1-\alpha)} \text{ and } c(s) \coloneqq \left\lceil D
  s^{\alpha/(1-\alpha)^2} \right\rceil + 1.\]

  For sufficiently large $s$, if we let $d = d(s)$ and $c = 
  c(s)-1$, then we have $d \geq 2s$, and
  \[
    \vol(d)^{2s/d}
    \le (b^{d^\alpha})^{2s/d}
    = b^{2s/d^{1-\alpha}}
    = b^{2/C^{1-\alpha}}
    = \delta + 1
  \]
  and
  \[
    \vol(2dc)^{1/c}
    \le (b^{(2dc)^\alpha})^{1/c}
    = b^{2^\alpha d^\alpha/c^{1-\alpha}}
    \leq b^{2^\alpha C^\alpha/D^{1-\alpha}}
    < \delta^{-1}.
  \]

  Hence
  \[
    (\vol(d)^{2s/d} - 1)^c \vol(2dc)
    \leq \delta^c (b^{2^\alpha C^\alpha/D^{1-\alpha}})^c
    < 1.
  \]
  So by \Cref{cover-existence-real}, $\dim^X_B(s,d(s)) < c+1 = c(s)$.
\end{proof}

\section{Hyperfiniteness}
\label{sec:hyperfiniteness}

In this section, we begin by proving Theorem C that shows sufficiently slow
Borel dimension growth implies hyperfiniteness. Our result generalizes
\cite[Theorem 7.3]{CJMSTD} which itself generalized the main construction in
Gao-Jackson \cite{GJ}. Our proof is a much more careful quantitative refinement
of the same basic idea from \cite[Theorem 7.3]{CJMSTD}.

We make a few definitions for the purposes of the proof:
\begin{defn}
Suppose $X$ is a set, and 
$\calU = (\calU_0, \ldots, \calU_n)$ is a finite sequence of collections
$\calU_i \subset \powset(X)$. If $A \subset X$, we define the \define{star of $A$ with
respect to $\calU$} to be the union of all the sets from the $\calU_i$ that it
intersects:
\[A * \calU = \bigcup \{B  \colon \exists i (B \in \calU_i) \land B \cap A \neq \emptyset\}.\]
If $\calA \subset \powset(X)$ is a family of subsets of $X$, we similarly define the \define{star of $\calA$ with
respect to $\calU$} to be the collection of the stars of all these sets with
$\calU$:
\[\calA * \calU = \{A * \calU \colon A \in \calA\}.\]
If $\calV, \calU$ are collections of sets, we say $\calV$ is
\define{generated from}
$\calU$ if $\calV = \calA * \calU$ for some $\calA \subset \powset(X)$. In particular,
every set in $\calV$ is a union of sets from $\calU$.
\end{defn}

So for example, if $\calU$ is a cover of $X$, then for all $A \subset X$,
$A \subset A * \calU$. And so if $\calV \subset \powset(X)$ is a cover of
$X$, then $\calV * \calU$ is also a cover of $X$. We also note here how the
star operation affects the diameter and separation of sets, since we'll
want to calculate how it changes the parameters $s$ and $d$ of
$(s,d)$-covers. If sets in $\calU$ have diameter bounded by $d$, then
$\diam(A * \calU) \leq \diam(A) + 2d$, and if $A, B \subset X$ have
$\rho(A,B) > s$, then $\rho(A * \calU,B* \calU) > s - 2d$. 

Following \cite[Section 4]{CJMSTD}, we also use the notion of a family of
sets dividing a pair:

\begin{defn}
  If $\{x,x'\} \subset X$ and $\calF \subset \powset(X)$ we say that
  $\calF$ \define{divides} $\{x,x'\}$ if there is an $A \in \calF$ such that
  both $A$ and $X \setminus A$ meet $\{x,x'\}$. Similarly, we say $\calU =
  (\calU_0, \ldots, \calU_n)$ divides $\{x,x'\}$ if there is some $i$ so that
  $\calU_i$ divides $\{x,x'\}$. We define $E_{\calU}$ to be the equivalence
  relation on $X$ where $x \mathrel{E_{\calU}} x'$ if and only if
  $\{x,x'\}$ is not divided by $\calU$. Equivalently, $x \mathrel{E_{\calU}} x'$ if and
  only if $x \in A \leftrightarrow x' \in A$ for all $i$ and $A \in \calU_i$.  
\end{defn}

If $\calU = (\calU_0, \ldots, \calU_n)$ is a
cover of $X$, then note that $[x]_{E_{\calU}}$ will be contained in the intersection of all the sets
from the $\calU_i$ that contain $x$. So for example, if $\calU$ is a cover
of $X$ by sets of uniformly bounded diameter, then the $E_{\calU}$ classes
will have uniformly bounded diameter. 

Below we will use two different ways of controlling how dividing is
related in different collections $\calA, \calB$ of sets. If $\calU$ is a
cover of $X$ and $\calA$ is generated by $\calU$, then if $\calA$ divides
$\{x,x'\}$, then $\calU$ must divide $\{x,x'\}$. This is since if $A \in \calA$
and its complement meet $\{x,x'\}$, then since $\calA$ is generated by $\calU$,
there is some $U \in \calU$ with $U \subset A$ that meets $\{x,x'\}$ and the
complement of $U$ must also meet $\{x,x'\}$.
However, if $\calA$ is a
$2r$-separated collection of sets that divides $\{x,x'\}$ and $\rho(x,x')
\leq r$, then $B_r(\calA)$ will not
divide $\{x,x'\}$, since if $A \in \calA$ meets $\{x,x'\}$, then $B_r(A)$ must
contain both points $x,x'$. More generally, any
enlargements of the sets in $B_r(\calA)$ which remain disjoint cannot
divide $\{x,x'\}$.

Now we can give a general lemma showing that if $\rho_0 \geq \rho_1 \geq \ldots$
is a decreasing sequence of locally finite Borel extended metrics with
sufficiently slow dimension growth, then the union of their finite distance
equivalence relations $\bigunion_n E_{\rho_n}$ is hyperfinite. The statement of
this lemma tries to give the most general possible version of this construction.
We will state some simpler corollaries afterwards which are easier to apply in
practice. 

\begin{lem}\label{hyperfinite-construction}
  Suppose $X$ is a standard Borel space, $\rho_0 \geq \rho_1 \geq \ldots$
  is a decreasing sequence of locally finite Borel extended metrics on $X$,
  $(s_n)_{n \in \N}$, $(d_n)_{n \in \N}$ are sequences of positive real
  numbers so that $d_n \to \infty$.

  Define $(d_n')_{n \geq 0}$ recursively by $d_0' = d_0$ and 
  \[d_n' \coloneqq d_n + 4(\dim_B^{\rho_{n+1}}(s_{n+1},d_{n+1}) +2)d_{n-1}' \text{ for $n \geq
  1$,}\]
  and assume that for all $n \geq 1$,
  \[s_n \geq 4 (\dim_B^{\rho_{n+1}}(s_{n+1},d_{n+1}) + 2) d'_{n-1}.\]
  Then $\bigunion_n E_{\rho_n}$ is hyperfinite.
\end{lem}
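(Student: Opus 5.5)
The plan is to run a quantitative version of the Gao--Jackson / \cite[Theorem 7.3]{CJMSTD} construction. Write $k_n \coloneqq \dim_B^{\rho_n}(s_n,d_n)$. We may assume each $k_n$ is finite: otherwise the corresponding $d'$ and the hypothesis on $s$ are infinite or vacuous, and there is nothing to prove. For each $n$ we fix a Borel $(s_n,d_n)$-cover $\calU^n=(\calU^n_0,\ldots,\calU^n_{k_n})$ of $(X,\rho_n)$. From these we recursively build Borel covers $\calV^n=(\calV^n_0,\ldots,\calV^n_{k_n})$ of $X$. They should satisfy three invariants:
\begin{enumerate}
\item every set in $\calV^n$ is a union of $E_{\calV^{n-1}}$-classes, i.e.\ each $\calV^n_i$ is generated from $\calV^{n-1}$;
\item each $\calV^n_i$ has $\rho_n$-diameter at most $d'_n$ and is still separated (distinct sets disjoint and at positive distance);
\item (healing) if $\rho_{n}(x,x')$ is small compared to $s_n$ and no earlier level divides $\{x,x'\}$ in a relevant way, then $\calV^n$ does not divide $\{x,x'\}$.
\end{enumerate}
There is an index shift: $d'_n$ involves the number of colors at level $n+1$. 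I would realize it by letting the passage from level $n$ to level $n+1$ enlarge the level-$n$ sets once per color of $\calU^{n+1}$. The exact bookkeeping is to be matched to the recursion for $d'_n$.

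Given these covers, we set $x \mathrel{F_n} x'$ iff no $\calV^m$ with $m\geq n$ divides $\{x,x'\}$. The $F_n$ are increasing Borel equivalence relations, and each $F_n$-class lies in an $E_{\calV^n}$-class. Since $\calV^n$ covers $X$ by sets of bounded $\rho_n$-diameter and $\rho_n$ is locally finite, these classes are finite. By (1), if $\calV^m$ divides a pair then so does $\calV^{m-1}$, as noted after the definition of dividing. Hence to get $\bigcup_n F_n=\bigcup_n E_{\rho_n}$ it suffices to show that every pair with $\rho_N(x,x')=r<\infty$ is eventually not divided. Because $\rho_m\le\rho_N$ for $m\ge N$ and $s_m,d_m\to\infty$ (the hypothesis forces $s_n\geq d'_{n-1}\ge d_{n-1}$), the pair has $\rho_m$-distance at most $r\ll s_m$ for all large $m$. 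Invariant (3) then heals it.

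The construction of $\calV^{n}$ from $\calU^{n}$ and $\calV^{n-1}$ proceeds color by color, for $i=0,\ldots,k_n$. Take a set $U\in\calU^n_i$ and replace it by the star of its $2d'_{n-1}$-enlargement with respect to $\calV^{n-1}$ and with respect to the sets already built in colors $<i$ (truncated appropriately so that it remains a union of earlier pieces). Each such step increases the diameter by about $4d'_{n-1}$. This yields a bound of the form $d_n+4(k+2)d'_{n-1}$, which matches the definition of $d'_n$, and costs at most $4(k+2)d'_{n-1}$ of separation. The hypothesis $s_n\ge 4(k+2)d'_{n-1}$ is exactly what keeps distinct enlarged sets of the same color disjoint, so the new families are still genuine separated families. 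Borelness is routine: these are local rules, and we use Lusin--Novikov.

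The main obstacle is invariant (3), the healing step. By the remark before the lemma, a $2r$-separated family that divides $\{x,x'\}$ with $\rho(x,x')\le r$ stops dividing it after taking $B_r$ and disjoint enlargements. But we must also make sure that later colors, and the starring needed for (1), do not re-divide the pair. I would first try the ordering argument of \cite[Theorem 7.3]{CJMSTD}. The point $x$ lies in some $U\in\calU^n_i$, and since the enlargement of $U$ contains $B_{d'_{n-1}}(U)$, it contains both points. For colors $j>i$, either the enlarged set contains both points or it misses both, because it is a union of earlier pieces, each of which already contains both points or neither. The key estimate to verify is that the margin $4d'_{n-1}$ per color is enough to absorb the diameters of the earlier pieces being starred in.
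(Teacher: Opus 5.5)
There is a genuine gap, and it sits exactly at the step you call the main obstacle: nothing in your construction forces a close pair to stop being divided. Each $\calV^n$ is generated from $\calV^{n-1}$, so a division at level $n$ forces one at level $n-1$. Read literally, your invariant (3) (no division at earlier levels implies none at level $n$) is therefore automatic and gives no healing. A pair at finite $\rho_N$-distance may already be divided at the level where you start, and you need a reason this cannot persist forever.

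Your ordering argument does not supply one. If $x\in U\in\calU^n_i$, it only handles colors $\geq i$. Take a color $j<i$ and a set built from some $U''\in\calU^n_j$. Through its star with $\calV^{n-1}$, it can pick up a piece $V\in\calV^{n-1}$ that contains $x$ but not $x'$; such a $V$ exists precisely because level $n-1$ divides the pair. Then $\calV^n_j$ divides the pair again, and this can recur at every level.

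The construction also fails the stated bookkeeping. Absorbing level-$n$ sets of colors $<i$ costs diameter of order $d_n$, not $4d'_{n-1}$. Moreover, two color-$i$ sets at distance just above $s_n$ may both absorb the same lower-color set, whose diameter (up to about $d_n$) is not controlled by $s_n$, so color $i$ is no longer disjoint; the hypothesis on $s_n$ only pays for losses at scale $d'_{n-1}$. Finally, your number of enlargement steps is governed by the $k_n+1$ colors of $\calU^n$, while the lemma's $d'_n$ and its hypothesis on $s_n$involve $k_{n+1}$, which is unrelated to $k_n$.

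What is missing is a finite budget of divisions that strictly drops whenever the output cover divides the pair; this is what the index shift pays for. Write $\calW^n$ for your fixed $(s_n,d_n)$-cover. At each level the paper builds a matrix $\calU^n_{ij}$ with $0\leq i\leq k_{n+1}+1$ and $0\leq j\leq k_n$, where each row $\calU^n_i$ is an $(s_n',d_n')$-cover, via $\calU^n_{0j}=\calW^n_j*\calU^{n-1}_j$ and $\calU^n_{ij}=B_{d'_{n-1}}(\calU^n_{(i-1)j})*\calU^{n-1}_j$. Here $\calU^{n-1}_j$ is the $j$th \emph{row} of the previous matrix, a full cover by sets of diameter at most $d'_{n-1}$. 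So each step costs at most $4d'_{n-1}$ in diameter and separation, and $k_{n+1}+2$ steps give exactly $d'_n$ and $s_n'\geq 0$.

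Two facts then drive the argument. The ball step ensures that a pair with $\rho_n(x,x')\leq d'_{n-1}$ is divided by at most one entry of each column. Generation ensures that a division in column $j$ of the $n$th matrix forces a division in row $j$ of the $(n-1)$st. Together these give an injection from the dividing entries of the $n$th matrix into those of the $(n-1)$st that misses its last row, because there are $k_n+1$ columns but $k_n+2$ rows.

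Take $\calV^n$ to be the last row, let $u_n$ be the number of dividing entries, and let $v_n$ count the colors of $\calV^n$ that divide the pair. For all large $n$ this gives $u_{n+1}\leq u_n-v_n$, so $\sum_n v_n<\infty$. Some counting mechanism of this kind, rather than a level-by-level healing statement, is what your argument needs.
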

\begin{proof}

We will take a collection of Borel $(s_n,d_n)$-covers for
$(X,\rho_n)$ and build
from them another collection of $(s_n',d_n')$-covers $\calV^n$ for 
$(X,\rho_n)$, where we define $s_n'$ below, so that if
$\rho_m(x,x') < \infty$ for some $m$, then $\calV^n$ will divide
$\{x,x'\}$ for at most finitely many $n$. Hence, the tail intersections of the equivalence
relations $E_{\calV^n}$ will witness the hyperfiniteness of $\bigunion_n
E_{\rho_n}$.

  Let $c_n = \dim_B^{\rho_n}(s_n,d_n)$, set $s_0'=s_0$, and, for $n \geq 1$,
  set $s_n' = s_n - 4(c_{n+1} + 2) d'_{n-1}$. (The $c_n$ are finite by our
  equations above, dropping an initial term if needed). 
  Note that our equation above on $s_n$ implies that $s_n' \geq 0$.
  For each $n$, fix a Borel
  $(s_n,d_n)$-cover $\calW^n = (\calW^n_0, \ldots, \calW^n_{c_n})$ of
  $(X,\rho_n)$. 
  We will build from each of these covers 
  a 
  %$(c_{n+1} + 2) \times (c_n+1)$ 
  ``matrix'' $\calU^n_{ij}$ of sets $\calU^n_{ij}
  \subset \powset(X)$ where $0 \leq i \leq c_{n+1}
  + 1$ and $0 \leq j \leq c_n$ with the following properties: 
  \begin{enumerate}
  \item The $i$th row $\calU^n_i = (\calU^n_{ij})_{j \leq c_n}$
  will be an $(s_n',d_n')$-cover of $X$ for every $i$ and $n$. 
  \item For every $n \geq 1$, the $j$th column $\calU^n_{*,j} = (\calU^n_{ij})_{i \leq c_{n+1} + 1}$
  will satisfy that if $\rho_n(x,x') \leq d'_{n-1}$, then $\{x,x'\}$ is divided by at
  most one family $\calU^n_{ij}$ in this column. 
  \item For every $n \geq 1$, each $\calU^n_{ij}$ is generated from $\calU^{n-1}_{j}$.
  \end{enumerate}

  We'll define the sets $\calU^n_{ij}$ by recursion
  on $n$ and verify their properties (1)-(3) inductively. For the base case, let $\calU^0_{ij} = \calW^0_j$ for
  every $0 \leq i \leq c_1 + 1$ and $0 \leq j \leq c_0$. So we are copying
  the same cover in every row of the $0$th matrix. Item (1) then is
  satisfied, and properties (2) and (3) are vacuously true.

  For $n \geq 1$, we first define the $i = 0$ row by:
  \[\calU^n_{0j} \coloneqq \calW^n_j * (\calU^{n-1}_{j}),\]
  so $\calU^n_0$ is a Borel $(s_n - 2 d_{n-1}',d_n + 2d'_{n-1})$-cover and hence
  also a Borel $(s_n - 4 d_{n-1}',d_n + 4d'_{n-1})$-cover.
  For subsequent rows $0 < i \leq c_{n+1} + 1$, we take a ball of
  radius $d'_{n-1}$ around the previous row and then star with $\calU^{n-1}_j$. 
  \[\calU^n_{ij} \coloneqq B_{d'_{n-1}}(\calU^n_{(i-1)j}) * (\calU^{n-1}_{jk})_{k
  \leq c_{n-1}}.\]
  So inductively, for each $i$, $\calU^n_i$ is a $(s_n - 4(i + 1)d'_{n-1}
  , d_n + 4(i+1)d'_{n-1})$-cover of $X$, since at each step we expand
  around each set by a ball of radius $d_{n-1}'$, and then apply the star
  operation with a cover with sets of diameter at most $d_{n-1}'$ which
  both enlarges the diameter and shrinks the separation of our sets by
  $4d'_{n-1}$. Then item (3) is clearly true. 

  All that remains is to check property (2). Suppose $\rho_n(x,x') \leq d'_{n-1}$,
  and $\{x,x'\}$ is divided by $A \in \calU^n_{ij}$. Then both $x,x' \in
  B_{d'_{n-1}}(A)$, and thus $\{x,x'\}$ is not divided by $\calU^n_{i'j}$ for all
  $i' > i$, since both $x,x'$ will be contained in some set in $\calU^n_{i'j}$,
  and since $\calU^n_{i'j}$ is a collection of disjoint sets by the
  assumption that $s_n' \geq 0$.

  We now have the following claim:

  \emph{Claim:}
    Suppose $(\calU^n_{ij})_{i \leq c_{n+1}+1,j \leq c_n}$ is a collection of
    sets with the above properties and suppose $\calV^n = \calU^n_{c_{n+1} + 1}$ is the
    $(s_n',d_n')$-cover from the last row of each matrix. Then if $\rho_m(x,x')
    < \infty$ for some $m$, then there are at most finitely many $n$ so that 
    $\calV^n$ divides $\{x,x'\}$.

    To prove the claim, fix $k$ and $(x,x')$ so that $\rho_k(x,x') <
    \infty$. Since $d_n \to \infty$ and hence
    $d_n' \to \infty$, we can choose $m \geq k$ so large that
    $\rho_k(x,x') < d'_{n-1}$ for every $n>m$. Then 
    $\rho_n(x,x') < d'_{n-1}$ whenever $n>m$, since
    $\rho_n \leq \rho_k$ for $n\geq k$.

    Now assume $n > m$.
    By assumption (2), for each $j$, $\{x,x'\}$ can be divided by at most one
    family $\calU^n_{ij}$ in the $j$th column.
    Furthermore, if $\{x,x'\}$ is divided by some $\calU^n_{ij}$ in the $j$th
    column of the $n$th matrix, where $j \leq c_n$, then it
    is also divided by some $\calU^{n-1}_{jk}$ in the $j$th row $\calU^{n-1}_{j}$
    of the $(n-1)$st matrix since each $\calU^n_{ij}$ in the $j$th column is
    generated by $\calU^{n-1}_j$ by assumption (3) and so by
    the observation above. 
    Hence, there is an injection from those pairs of indices $(i,j)$ for which
    $\calU^n_{ij}$ divides $\{x,x'\}$
    into the indices $(i,j)$ from the previous matrix for which $\calU^{n-1}_{ij}$
    divides $\{x,x'\}$. This injection
    never maps into the last row of the $n-1$st matrix $\calU^{n-1}_{ij}$ since
    the columns of $\calU^n_{ij}$ are indexed by $j \leq c_n$, but 
    the rows of $\calU^{n-1}$ are indexed by $i \leq c_n + 1$.

    Now we count the total number of divisions. Let
    $u_n = |\{(i,j) \colon \calU^n_{ij} \text{ divides } \{x,x'\}\}|$
    be the number of indices so that $\calU^n_{ij}$ divides $\{x,x'\}$ and let
    $
    v_n = |\{j \colon \calU^n_{c_{n+1}+1,j} \text{ divides } \{x,x'\}\}|
    $
    be the number of indices in the last row of this matrix dividing
    $\{x,x'\}$. 
    So $u_n \geq v_n$ for every $n$. Note $v_n
    = 0$ if and only if $\calV^n$ does not divide $\{x,x'\}$.
    For $n\geq m$, 
    our injection defined above gives $u_n-v_n\geq u_{n+1}$, and so by induction
    $u_m \geq u_{n+1} + \sum_{k = m}^{n} v_k$. Therefore
    $\sum_{k=m}^{\infty}v_k\leq u_m<\infty$. Hence 
    $v_k=0$ for all but finitely many $k$, and so
    $\calV^n$ divides $\{x,x'\}$ for only finitely many $n$.
    \qed Claim

  So if $\rho_m(x,x') < \infty$, then $x \mathrel{E_{\calV^n}} x'$
  for all but finitely many $n$. 
  Hence the equivalence relations
  $F_m \coloneqq \bigcap_{n \geq m} E_{\calV^n}$ are increasing and witness the
  hyperfiniteness of $\bigunion_n E_{\rho_n}$. 

\end{proof}

\begin{remark}\label{rem:locally-countable}
  We have proved the above lemma for locally finite Borel extended metric spaces.
  However, we remark that the same proof works unchanged for locally countable
  Borel extended metric spaces, if we modify the definition of $\dim_B^X(s,d)$
  to require not only that the collections of sets in our
  $(s,d)$-covers be uniformly bounded, but also that they be smooth.
  Suppose $(X,\rho)$ is a locally countable Borel extended metric space. Then
  say that a \define{smooth Borel $(s,d)$-cover} of $X$ is an $(s,d)$-cover
  $\calU = (\calU_0, \ldots, \calU_n)$ where each $\calU_i$ is Borel, and the
  equivalence relation $E_{\calU_i}$ is smooth. Then define the smooth
  Borel $2$-parameter dimension function of $X$ by:
  \[\dim^X_B(s, d) \coloneqq \inf\{n \in \N \colon
    \text{there is a smooth Borel $(s,d)$-cover of $X$ with $n+1$
    elements}\}.\]
  With this definition of $\dim^X_B(s, d)$ for locally countable spaces, we note that 
  \Cref{hyperfinite-construction} and all the rest of the results in this
  section are true for all locally countable Borel
  extended metric spaces. 
  This follows from the fact that a CBER is hyperfinite iff it is
  hypersmooth \cite[Theorem 5.1]{DJK}, and that the operations of taking balls and the star operations
  preserve smoothness. Precisely, if $\calU = (\calU_0, \ldots, \calU_n)$ is a smooth
  Borel $(s,d)$-cover, then $B_r(\calU_0), \ldots, B_r(\calU_n)$ is a smooth
  Borel $(s-2r,d + 2r)$-cover for any $r \leq s/2$. Furthermore if $\calV \subset \powset(X)$ is a
  smooth Borel $s'$-separated collection of sets and $s' \geq 2d$, then $\calV *
  \calU$ is also a smooth Borel $s' - 2d$-separated collection of sets. The
  smoothness of these collections follows from Lusin-Novikov uniformization.

  This same remark that our results can be generalized to locally countable
  metrics applies to \Cref{easier-hyperfinite} and
  \Cref{power-function-hyperfinite} below.
\end{remark}

A typical way of verifying the hypotheses of
\Cref{hyperfinite-construction} is to arrange that $d'_n \leq 2 d_n$.
In that case, we have the following simplified criterion for checking
hyperfiniteness. We also include another simplifying assumption that our
recurrence about $\dim_B^{\rho_m}(s_{n+1},d_{n+1})$ only needs to hold for
all sufficiently large $n$. 

\begin{cor}\label{easier-hyperfinite}
  Suppose $X$ is a standard Borel space,
  $\rho_0 \geq \rho_1 \geq \ldots$ is a decreasing sequence of locally
  countable Borel extended metrics on $X$,
  and $(s_n)_{n \in \N}$, $(d_n)_{n \in \N}$ are sequences of positive real numbers
  with $s_n \to \infty$
  so that for every $m$,
  \begin{equation}
  \label{eh-assumption}
  24\dim_B^{\rho_{m}}(s_{n+1},d_{n+1})d_{n-1} \leq s_n \leq d_n
  \end{equation}
  for all sufficiently large $n$. 
  Then $\bigunion_n E_{\rho_n}$ is hyperfinite.
\end{cor}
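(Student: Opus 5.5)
The plan is to deduce this from \Cref{hyperfinite-construction}, together with \Cref{rem:locally-countable}, which lets us work with locally countable metrics and smooth covers. Two things must be arranged. First, the quantity in the lemma is $\dim_B^{\rho_{n+1}}(s_{n+1},d_{n+1})$, taken with the metric whose index moves with $n$. The hypothesis \eqref{eh-assumption} instead controls $\dim_B^{\rho_m}(s_{n+1},d_{n+1})$ for each fixed $m$, and only for $n$ beyond a threshold that depends on $m$. Second, we must check the two numerical conditions of the lemma. Note that $d_n\ge s_n\to\infty$, so the requirement $d_n\to\infty$ holds automatically.

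\emph{Step 1 (reindexing the metrics).} For each $m$, choose $N_m$ so that \eqref{eh-assumption} holds for $\rho_m$ and all $n\ge N_m$, with $N_0<N_1<\cdots$. Define the new metric sequence by $\rho'_n=\rho_{m(n)}$, where $m(n)=\max\{m\le n: N_m\le n-1\}$, and drop the finitely many initial indices where this maximum is undefined. Then $m(n)$ is nondecreasing and unbounded. Hence $\rho'_0\ge\rho'_1\ge\cdots$ is still a decreasing sequence of locally countable Borel extended metrics. Moreover $\bigcup_n E_{\rho'_n}=\bigcup_m E_{\rho_m}$, because every $\rho_m$ occurs as, or is dominated by, some $\rho'_n$, and the sequence only moves forward. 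Since $N_{m(n+1)}\le n$, the hypothesis at index $n$ applied with $m=m(n+1)$ gives exactly
\[24\,\dim_B^{\rho'_{n+1}}(s_{n+1},d_{n+1})\,d_{n-1}\le s_n\le d_n\]
for all $n\ge1$ after the shift.

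\emph{Step 2 (the recursion).} Write $c_{n+1}=\dim_B^{\rho'_{n+1}}(s_{n+1},d_{n+1})$. I would prove by induction that $d'_n\le 2d_n$. Assuming $d'_{n-1}\le 2d_{n-1}$, we get
\[d'_n\le d_n+8(c_{n+1}+2)d_{n-1}.\]
When $c_{n+1}\ge1$ we have $8(c_{n+1}+2)\le 24c_{n+1}$, so $d'_n\le d_n+s_n\le 2d_n$. The same estimate gives
\[4(c_{n+1}+2)d'_{n-1}\le 24c_{n+1}d_{n-1}\le s_n,\]
which is the separation hypothesis of the lemma. The lemma then yields hyperfiniteness.

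\emph{Main obstacle: the case $c_{n+1}=0$.} Here \eqref{eh-assumption} gives no information, while the lemma still needs $s_n\ge 8d'_{n-1}$ and $d_n+8d'_{n-1}\le 2d_n$.

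The first thing I would try is to observe that the proof of \Cref{hyperfinite-construction} works verbatim with $c_n$ replaced by any upper bound $\tilde c_n\ge c_n$: we simply pad the covers with empty families. The plan is to set $\tilde c_n=\max(c_n,1)$. The hypotheses then reduce to showing $s_n\ge 48d_{n-1}$ at the indices where $c_{n+1}=0$.

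To get this, I would exploit what $\dim^{\rho'_{n+1}}_B(s_{n+1},d_{n+1})=0$ means. It says there is a single $s_{n+1}$-separated Borel family of diameter at most $d_{n+1}$ covering $X$. Consequently the classes of $E_{\rho'_{n+1}}$ are bounded, Borel-uniformly finite sets, and the same holds for every coarser-indexed metric. If this happens for infinitely many $n$, hyperfiniteness should follow directly: $\bigcup E_{\rho'_n}$ is then an increasing union of finite Borel equivalence relations, after passing to those indices. Otherwise it happens only finitely often, and those indices can be discarded in Step 1 as an initial segment.

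I expect that making this case split compatible with the reindexing of Step 1 is the fiddliest part of the argument. The rest is direct substitution into \Cref{hyperfinite-construction}.
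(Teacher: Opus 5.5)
Your architecture matches the paper's: reindex so the hypothesis holds with $m=n+1$, split off the case where the dimension is $0$ infinitely often, and otherwise shift so that $c_{n+1}\ge 1$ and run the induction $d'_n\le 2d_n$ to verify the hypotheses of \Cref{hyperfinite-construction}. Steps 1 and 2 are correct.

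\textbf{The gap is in the case $c_{n+1}=0$ for infinitely many $n$.} You claim that a single $s_{n+1}$-separated Borel family of diameter at most $d_{n+1}$ covering $X$ forces the classes of $E_{\rho'_{n+1}}$ to be bounded. That is false for general extended metrics. It does hold for graph path metrics, where $1$-separation forbids edges between distinct sets, but the corollary concerns arbitrary locally countable metrics. For example, put $\rho(i,j)=|i^2-j^2|$ on $\N$ and $K=\lceil s/2\rceil$. The family consisting of $\{0,\ldots,K\}$ together with the singletons $\{i\}$ for $i>K$ is an $s$-separated cover by sets of diameter at most $K^2$. So the dimension at scale $(s,K^2)$ is $0$ for every $s$, yet $E_\rho$ has a single infinite class. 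Hence you cannot conclude that $\bigcup_n E_{\rho'_n}$ is an increasing union of finite equivalence relations.

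\textbf{The repair, which is what the paper does,} is to use the covers themselves rather than the classes of $E_\rho$. This is exactly where $s_n\to\infty$ is genuinely needed; you only used it to get $d_n\to\infty$. Let $n_0<n_1<\cdots$ be the indices with $c_{n_k+1}=0$, and let $\calU^k$ be the corresponding single-family covers of $(X,\rho'_{n_k+1})$. Suppose $\rho_j(x,x')<\infty$. The sequence $\rho'$ is decreasing with unboundedly growing underlying indices, and $s_n\to\infty$. So for all large $k$ we have $\rho'_{n_k+1}(x,x')\le\rho_j(x,x')\le s_{n_k+1}$. Since $\calU^k$ is $s_{n_k+1}$-separated, it does not divide $\{x,x'\}$. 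Hence $F_j=\bigcap_{k\ge j}E_{\calU^k}$ is an increasing sequence of Borel equivalence relations whose union is $\bigcup_n E_{\rho_n}$. These relations are finite in the locally finite case, and smooth in the locally countable setting of \Cref{rem:locally-countable}. This gives hypersmoothness and hence hyperfiniteness. With this replacement your proof goes through. Your padding detour with $\max(c_n,1)$ is then unnecessary: the hypothesis gives no lower bound on $s_n/d_{n-1}$ when the dimension is $0$, so the case split is forced anyway.
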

\begin{proof}
  Suppose first there are infinitely many $m$ such that
  $\dim_B^{\rho_{m}}(s_{n+1},d_{n+1}) = 0$ for infinitely many $n$. 
  Then
  $\bigunion_n E_{\rho_n}$ is trivially hyperfinite as follows: 
  Choose strictly increasing sequences $(m_k)_{k \in \N}$ and $(n_k)_{k \in
  \N}$ of integers such that $\dim_B^{\rho_{m_k}}(s_{n_k+1},d_{n_k+1}) = 0$, and
  let $\calU^k$ be the corresponding $(s_{n_k + 1}, d_{n_k+1})$-cover of $X$
  which consists only of a single family of sets. Note that $\calU^k$ is $s_{n_k
  + 1}$-separated. Then the equivalence relations $F_n = \bigcap_{k \geq n}
  E_{\calU^k}$ witness the hypersmoothness, and hence the hyperfiniteness, of $\bigunion_n E_{\rho_n}$. This is
  since if $\rho_m(x,x') < \infty$, then for some sufficiently large $n$,
  $\rho_{m_k}(x,x') \leq s_{{n_k} + 1}$ for all $k \geq n$ since the metrics
  $\rho_n$ are decreasing, $n_k$ and $m_k$ are increasing, and $s_n \to \infty$.
  Then $\calU^k$ does not divide $\{x,x'\}$ for all such $k \geq n$, since the
  sets in $\calU^k$ are $s_{n_k + 1}$-separated. Hence $x \mathrel{E_{\calU^k}}
  x'$ for all $k \geq n$.

  So we may assume that for all but finitely many $m$,
  \begin{equation}\label{eh-dim1}  
  \dim_B^{\rho_{m}}(s_{n+1},d_{n+1}) \geq 1 
  \end{equation}
  holds for all sufficiently large $n$.
  Next we use some reindexing to simplify our assumptions. 
  We will replace our decreasing sequence $\rho_0 \geq \rho_1 \geq \ldots$
  of metrics with the sequence $\rho_{i_0} \geq \rho_{i_1} \geq \ldots$ 
  where $(i_n)_{n \in \N}$ is a slow-growing nondecreasing sequence of integers such
  that $i_n \to \infty$ as $n \to \infty$. Note that since 
  the equivalence relations $E_{\rho_n}$ are increasing with $n$, the union
  $\bigunion_{n \geq 0} E_{\rho_n} = \bigunion_{n \geq 0} E_{\rho_{i_n}}$
  does not change. By choosing $i_0$ sufficiently large, removing some finite
  initial segment from the beginning of the sequences $(d_n)_{n \in \N}$ and
  $(s_n)_{n \in \N}$, and choosing $(i_n)$ to be sufficiently slow growing,
  we may assume without loss of generality that both \Cref{eh-dim1} and \Cref{eh-assumption}
  hold for all $n \geq 1$ and $m = n + 1$. 

  Now we will show that the hypotheses of \Cref{hyperfinite-construction} hold.
  We claim that if we define $d'_0 \coloneqq d_0$ and $d'_n \coloneqq d_n +
  4(\dim_B^{\rho_{n+1}}(s_{n+1},d_{n+1}) + 2)d_{n-1}'$, then $d'_n \leq 2 d_n$ for all $n$ by
  induction. The base case is clear. For the induction step, note that 
  \Cref{eh-dim1} implies that 
  $4 (\dim_B^{\rho_{n+1}}(s_{n+1},d_{n+1}) +
  2) \leq 12 \dim_B^{\rho_{n+1}}(s_{n+1},d_{n+1})$.
  Then 
  \[d'_n \leq d_n + 12\dim_B^{\rho_{n+1}}(s_{n+1},d_{n+1})d_{n-1}'\leq d_n + 24
  \dim_B^{\rho_{n+1}}(s_{n+1},d_{n+1})d_{n-1} \leq 2d_n\]
  where at the penultimate step we have used the induction hypothesis.
  Finally, this means that
  \[4(\dim_B^{\rho_{n+1}}(s_{n+1},d_{n+1}) + 2)d_{n-1}' \leq 12
  \dim_B^{\rho_{n+1}}(s_{n+1},d_{n+1}) d_{n-1}' \leq 24
  \dim_B^{\rho_{n+1}}(s_{n+1},d_{n+1}) d_{n-1} \leq s_n\]
  by our assumption.
\end{proof}

\begin{remark}
For the sake of keeping our proofs simple, we have not optimized the constants
in \Cref{hyperfinite-construction} and \Cref{easier-hyperfinite}.
For example,
the factor of $4$ in both of our equations in the statement of \Cref{hyperfinite-construction} could be improved to $2 +
\ve$ for any $\ve$ by replacing the $d'_{n-1}$-balls in the construction above
with $\frac{\ve}{2} d'_{n-1}$-balls. However, this sort of constant-factor improvement
would not yield an improvement to the exponent in Theorem A.   
Similarly, the constant in \Cref{easier-hyperfinite} could be improved from $24$ to $8 + \ve$ for
any $\ve$ by using that improved version of \Cref{hyperfinite-construction}, and by replacing \Cref{eh-dim1} with
the equation $\dim_B^{\rho_{m}}(s_{n+1},d_{n+1}) \geq 1/\ve$ by using the fact
that if this fails for infinitely many $m$, then $\bigunion_{n \geq 0}
E_{\rho_n}$ is an increasing union of spaces of finite asymptotic dimension and
is hence hyperfinite by \cite[Theorem 7.3]{CJMSTD}. However, as
above this constant improvement would yield
no improvement to our exponent in Theorem A.
We also remark that \Cref{easier-hyperfinite} is still true with the weaker assumption that
$24\dim_B^{\rho_{n+1}}(s_{n+1},d_{n+1})d_{n-1} \leq s_n$, and
$24\dim_B^{\rho_{n+1}}(s_{n+1},d_{n+1})d_{n-1}\leq d_n$ (with no relationship between $s_n$
and $d_n$), assuming that $d_n \to \infty$. However in all cases of interest, we
will have that $s_n \leq d_n$, so we have stated the inequalities this way
to make the statement simpler. 
\end{remark}

Recall that in \Cref{dimension-subexp} we proved an estimate of the form 
$\dim^X_B(s, C s^a) \in O(s^b)$ (where $C > 0$ is a constant, and $a =
1/(1-\alpha)$ and $b = \alpha/(1-\alpha)^2$) for graphs of growth $\vol(r) \in
\exp(O(r^\alpha))$. So we are interested in cases where
one can use \Cref{hyperfinite-construction} to prove hyperfiniteness for spaces
with this type of polynomially bounded and controlled dimension growth. 

\begin{thm}\label{power-function-hyperfinite}
  Suppose $a\geq 1$, $b \geq 0$, and $ab\leq 1/4$.
  Suppose $\rho_0 \geq \rho_1 \geq \ldots$ is a decreasing sequence of locally countable Borel extended
  metrics on a standard Borel space $X$ such that for every $m$, there is a
  constant $C_m$ such that 
  \[
    \dim^{\rho_m}_B(s,C_m s^a)\in O(s^b).
  \]
  Then $\bigunion_n E_{\rho_n}$ is hyperfinite.
\end{thm}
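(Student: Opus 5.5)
The plan is to deduce this from \Cref{easier-hyperfinite} by choosing explicit sequences $(s_n)$ and $(d_n)$ that grow very fast. The main features are: a diameter sequence that beats every constant $C_m$ at once, and exponents $\log s_n$ satisfying a linear recurrence inequality with a little room to spare.

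\emph{Handling all $C_m$ at once.} Put $d_n \coloneqq s_n^a \log s_n$. Fix $m$. For all sufficiently large $n$ we then have $d_n \geq C_m s_n^a$. Since $\dim_B$ is decreasing in its second coordinate, the hypothesis gives a constant $K_m$ with
\[
\dim_B^{\rho_m}(s_{n+1},d_{n+1}) \leq K_m s_{n+1}^b
\]
for all large $n$. Because $a \geq 1$, we also get $s_n \leq d_n$ once $s_n \geq e$. So, for each fixed $m$, it suffices to arrange that for all large $n$
\[
24 K_m\, s_{n+1}^b\, s_{n-1}^a \log s_{n-1} \leq s_n .
\]
Writing $s_n = \exp(e_n)$, this becomes
\[
b e_{n+1} + a e_{n-1} + \log e_{n-1} + \log(24K_m) \leq e_n .
\]
The method is to make the slack $e_n - b e_{n+1} - a e_{n-1}$ tend to $+\infty$ faster than $\log e_{n-1}$. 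Then every constant $K_m$ is eventually absorbed, and \Cref{easier-hyperfinite} applies.

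\emph{Choosing the exponents $e_n$.} If $b = 0$, take $e_n = 2^n$. The condition then reads $a 2^{n-1} + O(n) \leq 2^n$, which fails as written when $a \geq 2$; in that case use $e_n = \lambda^n$ with $\lambda > a$. If $b > 0$, look for $e_n = \lambda^n f(n)$, so that the linear part becomes
\[
\lambda^n\bigl(b\lambda f(n+1) + a\lambda^{-1} f(n-1)\bigr).
\]
When $ab < 1/4$, the quadratic $b\lambda^2 - \lambda + a$ has two real roots. Taking $\lambda$ strictly between them and $f \equiv 1$ gives slack $c\lambda^n$ with $c > 0$, which is plenty.

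The main obstacle is the boundary case $ab = 1/4$. There $\lambda = 1/(2b)$ is a double root, and $b\lambda = a/\lambda = 1/2$. Plain $\lambda^n$, and even $n\lambda^n$, give exactly zero slack. The fix I would use is $f(n) = n^{p}$ with $0 < p < 1$. By strict concavity,
\[
\tfrac12\bigl((n+1)^p + (n-1)^p\bigr) = n^p - \tfrac{p(1-p)}{2}\, n^{p-2} + o(n^{p-2}),
\]
so the slack is of order $\lambda^n n^{p-2}$. Note that $a \geq 1$ and $ab \leq 1/4$ force $b \leq 1/4$, hence $\lambda = 1/(2b) \geq 2 > 1$. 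The slack is therefore exponentially large in $n$, whereas $\log e_{n-1} = O(n)$, so it absorbs both the logarithmic terms and $\log(24K_m)$ for every $m$.

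\emph{Concluding.} In all cases $s_n \to \infty$. For each $m$, \eqref{eh-assumption} holds for all sufficiently large $n$. The metrics are locally countable, and \Cref{easier-hyperfinite} is stated for that setting (see \Cref{rem:locally-countable}), so it yields hyperfiniteness of $\bigunion_n E_{\rho_n}$. I expect the only delicate step to be the second-order asymptotic in the boundary case $ab = 1/4$; everything else is routine bookkeeping of constants.
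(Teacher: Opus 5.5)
Your proof is correct and follows essentially the paper's route: reduce to \Cref{easier-hyperfinite}, pad $d_n$ by a slowly growing factor over $s_n^a$ so that every constant $C_m$ is eventually absorbed, and take $\log s_n = (2a)^n f(n)$ with $f$ strictly concave to obtain second-order slack in the boundary case $ab = 1/4$. The only difference is that the paper uses $\beta = 2a$ and $f(n) = \log n$ uniformly for all $ab \leq 1/4$ (since $b\beta \leq 1/2$ always holds), so your case split into $b = 0$, $ab < 1/4$ and $ab = 1/4$, with $f(n) = n^p$ in the last case, is valid but not needed.
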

\begin{proof}
  Let
  \[
    \beta \coloneqq 2a, \text{ }
     s_n \coloneqq \exp(\beta^n \log n)
     \text{ and } d_n \coloneqq \log \log(s_n) s_n^a
  \]
  for $n \geq 2$.
  \trivial{These formulas are only needed on a tail of the sequences so it is
  fine that they are undefined for $n < 2$.}
  We will verify the hypotheses of \Cref{easier-hyperfinite}.
  So we need to show that for sufficiently large $n$,
  \[
  24\dim_B^{\rho_{m}}(s_{n+1},d_{n+1})d_{n-1} \leq s_n \leq d_n.
  \]
  Clearly $s_n \leq d_n$ for sufficiently large $n$, so to finish it
  suffices to show that 
\[\frac{\dim_B^{\rho_{m}}(s_{n+1},d_{n+1})d_{n-1}}{s_n} \to 0 \text{ as } n
\to \infty.\]

  Note first that for sufficiently large $n$, $d_n \geq C_m s_n^a$, and so 
  for sufficiently large $n$,
  $\dim_B^{\rho_m}(s_n,d_n) \leq \dim_B^{\rho_m}(s_n,C_m s_n^a) \in
  O(s_n^b)$. Note also that $\log \log(s_{n}) = \log(\beta^n \log n) = n \log(\beta)
  + \log \log n \in O(n)$. Hence,
  \[
  \begin{split}
  \frac{\dim_B^{\rho_{m}}(s_{n+1},d_{n+1})d_{n-1}}{s_n} & =
  \frac{O(s_{n+1}^b)\log \log(s_{n-1}) s_{n-1}^a}{s_n} \\
  & = O(n) \frac{ s_{n+1}^b s_{n-1}^a}{s_n} \\
  & \leq O(n) \exp \left( \beta^n  (\frac{1}{2} \log (n+1) + \frac{1}{2} \log
  (n-1) - \log(n)) \right)\\
  & \leq O(n) \exp \left( \frac{\beta^n}{2} \log \left(
  \frac{(n+1)(n-1)}{n^2} \right) \right) \\
  & \leq O(n) \exp \left( \frac{\beta^n}{2} \log \left(
  1 - \frac{1}{n^2} \right) \right) \leq O(n)
  \exp\left(-\frac{\beta^n}{2n^2}\right) \to 0 \text{ as $n \to \infty$}.
  \end{split}
  \]
\end{proof}

\Cref{easier-hyperfinite} and \Cref{power-function-hyperfinite} combined prove
Theorem C from the introduction.

\begin{remark}\label{rem:ab-leq-1/4}
We remark that the assumption that $0<ab\leq 1/4$ in
\Cref{power-function-hyperfinite} is the optimal result where we can obtain
hyperfiniteness by directly applying 
\Cref{hyperfinite-construction} for spaces where 
$\dim^X_B(s,C s^a)$ has growth of order $s^b$. That is, suppose $(X,\rho)$ is a
Borel extended metric space and there are
constants $C, D > 0$ such that $\dim^{\rho}_B(s,d) \geq D s^b$ for all sufficiently
large $s$ and $d \geq C
s^a$. Suppose also 
we have sequences
$(s_n)_{n \geq 0}$ and $(d_n)_{n \geq 0}$ where $d_n \geq C s_n^a$ and $d_n \to \infty$
satisfying the conditions in \Cref{hyperfinite-construction}, where $\rho_n =
\rho$ is the constant sequence.
Then we claim that $4ab \leq 1$.

This is since we must satisfy 
$s_n \geq 4 (\dim_B^{\rho_{n+1}}(s_{n+1},d_{n+1}) + 2) d'_{n-1}$ by
assumption, so we have that
$s_{n+1}^b d'_{n-1} \in O(s_n)$. Then since 
$d'_{n-1}\geq d_{n-1} \geq C s_{n-1}^a$, this implies
$s_{n+1}^b s_{n-1}^a \in O(s_n)$. Let $x_n \coloneqq \log s_n$. Then taking
logarithms gives
\[
  a x_{n-1}+b x_{n+1}\leq x_n+O(1).
\]
Assuming $4ab>1$, choose $0<a'<a$ and $0<b'<b$ such that $4a'b'>1$.
Now $x_n\to\infty$ because $s_n \geq 8 d'_{n-1} \geq 8 d_{n-1}$ and $d_n \to
\infty$ as $n \to \infty$. 
This implies 
$a' x_{n-1}+ b' x_{n+1} < x_n$ for sufficiently large $n$.
By 
the AM-GM inequality, this implies $x_n > 2\sqrt{a'b' x_{n-1} x_{n+1}}$ so $x_n^2 > 4a'b' x_{n-1}
x_{n+1}$, and $\frac{x_n}{x_{n-1}} > 4a'b' \frac{x_{n+1}}{x_n}$. Thus 
there is an $N$ such that $\frac{x_{m+1}}{x_m} \leq \frac{1}{(4a'b')^{m-N}} \frac{x_{N+1}}{x_N}$ for all $m \geq N$. Hence
if $4a'b' >
1$, then $\frac{x_{m+1}}{x_{m}} \to 0$ as $m \to \infty$, 
contradicting $x_n \to \infty$. 
\end{remark}

We can now prove that a decreasing sequence of locally countable Borel
extended metrics of growth at most $\exp(O(r^\gamma))$ is hyperfinite, where
$\gamma$ is as in the abstract:

\begin{thm} \label{thm:uniform-gamma-hyp}
Let $\gamma \approx 0.1523$ be the unique real root of $(1 - \gamma)^3 - 4
\gamma = 0$, and suppose $X$ is a standard Borel space and
$\rho_0 \geq \rho_1 \geq \ldots$ is a decreasing
sequence of locally countable Borel extended metrics on $X$ so that for every $n$, $\rho_n$ has
volume growth bounded by 
$\vol^{\rho_n}(r) \in \exp(O(r^{\gamma}))$. 
Then
$\bigunion_n E_{\rho_n}$ is hyperfinite.
\end{thm}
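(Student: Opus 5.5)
The plan is to combine \Cref{dimension-subexp}, which bounds Borel dimension growth in terms of volume growth, with \Cref{power-function-hyperfinite}, which turns power-type dimension bounds into hyperfiniteness.

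First, a preliminary check. Each $\rho_n$ has $\vol^{\rho_n}(r) \in \exp(O(r^\gamma))$, so every ball $B^{\rho_n}_r(x)$ is finite. Hence each $(X,\rho_n)$ is a locally finite Borel extended metric space with uniformly bounded volume growth. This means \Cref{dimension-subexp} applies to it, and the locally countable setting of \Cref{rem:locally-countable} plays no real role here. I also need $\gamma \in (0,1)$. Let $f(t) = (1-t)^3 - 4t$. Then $f(0) = 1 > 0$ and $f(1) = -4 < 0$, and $f$ is strictly decreasing on $[0,1]$, so the unique real root lies in $(0,1)$.

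Next, fix $m$ and apply \Cref{dimension-subexp} with $\alpha = \gamma$ to $(X,\rho_m)$. This gives a constant $C_m > 0$ with
\[
\dim^{\rho_m}_B\bigl(s, C_m s^{1/(1-\gamma)}\bigr) \in O\bigl(s^{\gamma/(1-\gamma)^2}\bigr).
\]
Set $a = 1/(1-\gamma)$ and $b = \gamma/(1-\gamma)^2$. These exponents do not depend on $m$; only the constants $C_m$ and the implied $O$-constants do, and \Cref{power-function-hyperfinite} allows exactly this. Moreover $a \geq 1$ and $b \geq 0$ because $\gamma \in (0,1)$.

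Finally, I verify the exponent condition:
\[
ab = \frac{\gamma}{(1-\gamma)^3} = \frac{1}{4},
\]
using the defining equation $(1-\gamma)^3 = 4\gamma$. So $ab \leq 1/4$ holds, with equality. Now \Cref{power-function-hyperfinite} applies to the decreasing sequence $\rho_0 \geq \rho_1 \geq \ldots$ and yields that $\bigunion_n E_{\rho_n}$ is hyperfinite.

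There is no substantive obstacle. The only points needing care are the ones already handled: that the growth hypothesis forces local finiteness (so the dimension bound applies to each $\rho_m$ separately), and that the boundary case $ab = 1/4$ is allowed by \Cref{power-function-hyperfinite}.
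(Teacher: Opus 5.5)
Your proposal is correct and follows essentially the same route as the paper: apply \Cref{dimension-subexp} with $\alpha=\gamma$ to each $\rho_m$, then invoke \Cref{power-function-hyperfinite} with $a=1/(1-\gamma)$, $b=\gamma/(1-\gamma)^2$, noting $ab=\gamma/(1-\gamma)^3=1/4$. Your extra checks are sound but not needed for the argument's structure: the growth bound forces local finiteness so \Cref{dimension-subexp} applies, $\gamma\in(0,1)$, and the boundary case $ab=1/4$ is permitted.
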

\begin{proof}
  By \Cref{dimension-subexp}, for each $n$ there is a constant $C_n > 0$
  such that $\dim^{\rho_n}_B(s,C_n s^{1/(1-\gamma)}) \in O(s^{\gamma/(1-\gamma)^2})$, and hence by
  \Cref{power-function-hyperfinite}, it is hyperfinite if $4 \gamma \leq (1 - \gamma)^3$, which holds by the definition of $\gamma$.
\end{proof}

We can remove the uniform volume growth assumption and weaken it to a
pointwise volume growth bound by passing to a new decreasing sequence of metrics
to which we can apply \Cref{thm:uniform-gamma-hyp}. This gives Theorem A from
the introduction:

\begin{thm} 
Let $\gamma$ be the unique real root of $(1 - \gamma)^3 - 4
\gamma = 0$, so $\gamma \approx 0.1523$.
Suppose $X$ is a standard Borel space and
$\rho_0 \geq \rho_1 \geq \ldots$ is a decreasing
sequence of Borel extended metrics on $X$ so that for every
$n$, $\rho_n$ has pointwise 
volume growth $\exp(O(r^{\gamma}))$. That is, for every $n \in \N$ and $x \in
X$, there exist $C,N > 0$ such that $|B^{\rho_n}_r(x)| \leq
\exp(C r^\gamma)$ for all $r \geq N$. Then $\bigunion_n E_{\rho_n}$ is
hyperfinite, where $E_{\rho_n}$ is the finite distance equivalence relation for
$\rho_n$.
\end{thm}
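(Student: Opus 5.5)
We reduce to \Cref{thm:uniform-gamma-hyp} by building a new decreasing sequence of Borel extended metrics $\rho'_0 \geq \rho'_1 \geq \ldots$, each with uniform volume growth $\vol^{\rho'_k}(r) \in \exp(O(r^\gamma))$, such that $\bigunion_k E_{\rho'_k} = \bigunion_n E_{\rho_n}$. First note each $\rho_n$ is locally countable. Indeed, for each $x$ and $r$ the ball $B_r^{\rho_n}(x)$ is contained in $B_{\max(r,N)}^{\rho_n}(x)$, which is finite by the pointwise bound. So each $E_{\rho_n}$ is a CBER. The Borel sets are then manageable by Lusin--Novikov.

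For $C,N \in \N$, let $P_{n,C,N}$ be the set of $x$ with $|B^{\rho_n}_r(x)| \leq \exp(C r^\gamma)$ for all $r \geq N$. It suffices to check rational $r$ together with right-limits, since ball cardinality is monotone and right-continuous in $r$. Hence $P_{n,C,N}$ is Borel, and $X = \bigunion_{C,N} P_{n,C,N}$. The obstacle is that the bound holds only at the centers $x\in P$, whereas we need a uniform bound at every point of a space.

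The fix is to restrict to subspaces. Define $\rho'_k$ as the extended metric that agrees with $\rho_k$ on pairs of points both lying in $Q_k := P_{k,k,k}$ and equals $\infty$ otherwise, except that $\rho'_k(x,x)=0$. This is a Borel extended metric, since the triangle inequality survives: a triangle either lies inside $Q_k$ or has some side of length $\infty$. Its balls are $B^{\rho'_k}_r(x) = B^{\rho_k}_r(x)\cap Q_k$ for $x\in Q_k$ and $\{x\}$ otherwise. Thus $\vol^{\rho'_k}(r) \leq \exp(k r^\gamma)$ for $r\ge k$, and for $r<k$ it is bounded by $\exp(k k^\gamma)$. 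So the growth is uniformly $\exp(O(r^\gamma))$.

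Decreasingness fails, however: we need $\rho'_k\ge\rho'_{k+1}$, which requires $Q_k\subset Q_{k+1}$ and $\rho_k \geq \rho_{k+1}$. The second holds. For the first, $\rho_{k+1}\le\rho_k$ makes balls larger, so $P_{k,\cdot,\cdot}\not\subset P_{k+1,\cdot,\cdot}$ in general. I would instead use $Q_k := \bigcap_{j\le k}$-type choices carefully, or better, diagonalize. Set $\rho'_k := \rho'$ built from $\rho_{m}$ with $m=m(k)\le k$ and sets $Q_k = P_{m,k,k}$ with $m$ fixed along blocks. The cleanest option is a single index: take the metric $\sigma_k$ equal to $\min_{j\le k} \rho^{Q}_{j}$, the path-metric infimum over chains using the restricted metrics $\rho_j|_{P_{j,k,k}}$. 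This is decreasing in $k$, and its finite-distance relation is exactly $\bigunion_{j\le k} E_{\rho_j|P_{j,k,k}}$.

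The main check is that $\sigma_k$ still has uniform $\exp(O(r^\gamma))$ growth, which is the hardest step. Chaining can enlarge balls, since a chain may pass through points not in $P$. To avoid this, I would instead verify directly that the union is hyperfinite by applying \Cref{thm:uniform-gamma-hyp} separately to each sequence, with $j$ fixed and $k$ varying. For fixed $j$, the restrictions $\rho_j|_{P_{j,k,k}}$ are decreasing in $k$, since $P_{j,k,k}$ increases with $k$. These restrictions are uniform, so \Cref{thm:uniform-gamma-hyp} yields that $E_{\rho_j}$ (whose pairs lie in some $P_{j,k,k}$ for large $k$) is hyperfinite. Here we use that $(x,x')\in E_{\rho_j}$ implies $x,x'\in P_{j,k,k}$ for large $k$. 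The increasing union $\bigunion_j E_{\rho_j}$ of hyperfinite CBERs is then hyperfinite only via a further argument. So I would finally feed the diagonal family $\rho_j|_{P_{j,k,k}}$ (with $j\le k$) into the construction of \Cref{hyperfinite-construction}, whose proof only uses each pair being at finite distance in a tail of the metrics and dimension bounds per metric. This adaptation is where I expect the real work.
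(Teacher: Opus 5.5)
There is a genuine gap. You never produce a decreasing sequence of metrics with uniform $\exp(O(r^\gamma))$ growth, and the last step (adapting \Cref{hyperfinite-construction} to the diagonal family) is left unspecified.

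Your chain metric $\sigma_k$ is in fact exactly the metric the paper uses, and it works; you dropped it for the wrong reason. Recall $\sigma_k(x,y)$ is the infimum of $\sum_i \rho_{j_i}(x_i,x_{i+1})$ over chains $x=x_0,\ldots,x_\ell=y$ with labels $j_i\le k$ and $x_i,x_{i+1}\in P_{j_i,k,k}$. A chain never leaves $\bigcup_{j\le k}P_{j,k,k}$, since both endpoints of every step lie in the relevant $P$. The only danger is an unbounded number of steps, and the missing idea is that monotonicity of the $\rho_j$ bounds the step count:
\begin{itemize}
\item Suppose two steps carry the same label $j$ and every step strictly between them has a smaller label. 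Because $\rho_{j'}\ge\rho_j$ for every $j'<j$, the triangle inequality for $\rho_j$ lets you replace that whole segment by a single $\rho_j$-step of no greater length. Its endpoints again lie in $P_{j,k,k}$.
\item Hence the largest label may be assumed to occur once. Recursing on both sides, chains may be assumed to have at most $S(k)=2^{k+1}-1$ steps.
\item For $r\ge k$, each step of length at most $r$ from a point of $P_{j,k,k}$ has at most $\exp(kr^\gamma)$ possible endpoints. So $|B^{\sigma_k}_r(x)|\le\sum_{\ell\le S(k)}\bigl((k+1)\exp(kr^\gamma)\bigr)^\ell$, which is $\exp(O(r^\gamma))$ for fixed $k$.
\end{itemize}
We also have $\sigma_{k+1}\le\sigma_k$, since there are more labels and $P_{j,k+1,k+1}\supseteq P_{j,k,k}$. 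And $\bigcup_k E_{\sigma_k}=\bigcup_n E_{\rho_n}$: if $\rho_n(x,x')<\infty$, then $x,x'\in P_{n,k,k}$ for all large $k$, so $\sigma_k(x,x')\le\rho_n(x,x')$ for all large $k\ge n$. \Cref{thm:uniform-gamma-hyp} then finishes the proof.

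Your fallback does not substitute for this.
\begin{itemize}
\item Applying \Cref{thm:uniform-gamma-hyp} for each fixed $j$ only shows that each $E_{\rho_j}$ is hyperfinite. Whether an increasing union of hyperfinite CBERs is hyperfinite is a well-known open problem, so that route cannot close by itself.
\item The diagonal family $\rho_j|_{P_{j,k,k}}$ is not a decreasing sequence under any natural enumeration, because $P_{j,k,k}$ need not be contained in $P_{j',k',k'}$ for $j'>j$.
\item \Cref{hyperfinite-construction} uses monotonicity essentially. It needs it to know that sets of the stage-$(n-1)$ cover have $\rho_n$-diameter at most $d'_{n-1}$ when starring. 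It also needs it in the claim that a pair at finite $\rho_k$-distance is at $\rho_n$-distance $<d'_{n-1}$ for all large $n$.
\item With plain restrictions, the constants witnessing $x\in P_{j,\cdot,\cdot}$ may grow arbitrarily fast in $j$. So no enumeration $(j_t,k_t)$ with $j_t\to\infty$ guarantees $x,x'\in P_{j_t,k_t,k_t}$ for all large $t$, and the tail argument breaks.
\end{itemize}
The chain metric, with the step-count bound above, is precisely the device that restores monotonicity while keeping uniform growth.
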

\begin{proof}
  We will define a new decreasing sequence of Borel extended metrics $\rho_n'$ with uniformly bounded volume growth
  $\exp(O(r^\gamma))$ so that the union of the connectedness relations is
  unchanged: $\bigunion_n E_{\rho_n'} = \bigunion_n E_{\rho_n}$, and then apply
  \Cref{thm:uniform-gamma-hyp} to these new metrics $\rho_n'$.

  For $m,n\in\N$, let $A_{m,n}$ be the set of $x \in X$ such that
  $|B_r^{\rho_m}(x)|\leq \exp(nr^\gamma)$ for every $r \geq n$. So the sets
  $A_{m,n}$ are Borel and $\bigunion_{n \in \N} A_{m,n}=X$. Note that the
  restriction of $\rho_m$ to $A_{m,n}$ has uniformly bounded volume growth 
  $\exp(O(r^\gamma))$ for every $n$. Hence, for a single fixed
  extended metric $\rho_m$, we could show that $E_{\rho_m}$ is hyperfinite by applying
  \Cref{thm:uniform-gamma-hyp} to the decreasing sequence of metrics $\sigma_0 \geq
  \sigma_1 \geq \ldots$ defined by setting $\sigma_n(x,x')\coloneqq \rho_m(x,x')$ if $x,x' \in
  A_{m,n}$, $\sigma_n(x,x') = 0$ if $x = x'$, and $\sigma_n(x,x') = \infty$ otherwise. To handle the countably
  many metrics $\rho_n$, we will use a more elaborate version of this sort of idea. 

  For $n\in\N$, define the metric $\rho'_n(x,y)$ to be the infimum of 
  $\sum_{i<k}\rho_{m_i}(x_i,x_{i+1})$ for all sequences $x=x_0,\ldots,x_k=y$ and
  $m_0,\ldots,m_{k-1}\leq n$ such that $x_i,x_{i+1}\in A_{m_i,n}$ for all $i <
  k$.  
  So $\rho_n'$ is the infimum of the lengths of ``labeled'' paths from $x$ to $y$, where each
  step from $x_i$ to $x_{i+1}$ is labeled by $m_i$ where both $x_i, x_{i+1} \in
  A_{m_i,n}$, and we
  measure distance using the metric $\rho_{m_i}$. Note that
  $\rho'_{n+1}\leq\rho'_n$ for all $n$ by definition, and the union of the finite distance relations is
  unchanged: $\bigunion_n E_{\rho_n'} = \bigunion_n E_{\rho_n}$ as required,
  since $\bigunion_{n \in \N} A_{m,n} = X$.

  We now compute a uniform upper bound on the volume growth of the metrics
  $\rho_n'$. We begin by noting there is a
  finite bound on the
  number of steps we need to consider in these walks defining $\rho_n'$. Suppose an
  allowed walk $x_0, \ldots, x_k$ has two steps $i < j < k$ where $m_i = m_j$
  and every step strictly between them has label less than $m_i$.
  Then the part of the walk from $x_i, \ldots, x_{j+1}$ can be replaced by a
  single step directly from $x_i$ to $x_{j+1}$ with label $m_i$. Thus, we may
  assume all walks contain their largest label $m_i$ at most once. Thus, by
  induction (splitting the walk before and after this label),
  we may consider only walks with at most $S(n)$ steps, where $S(n)$ satisfies the
  recurrence $S(0) = 1$ and $S(n) = 2S(n-1)+1$, since we have $1$ step of
  largest label in the walk, and then the walk before and after that has length
  at most $S(n-1)$. 

Now fix $n$ and suppose $r \geq n$. If $y \in B_r^{\rho_n'}(x)$, then
there is such a walk $x=x_0, \ldots, x_k=y$ with $k \leq S(n)$ steps,
labels $m_0, \ldots, m_{k-1}$, and total length at most $r$. Each
step has distance $\rho_{m_i}(x_i,x_{i+1})
\leq r$ in the metric $\rho_{m_i}$. Since
$x_i \in A_{m_i,n}$, we have $|B^{\rho_{m_i}}_r(x_i)| \leq
\exp(nr^\gamma)$. Since
there are $n+1$ possible choices for the value of $m_i$, at each step of the
walk, and then at most $|B^{\rho_{m_i}}_r(x_i)|$ choices of the next point to
walk to given this label $m_i$, we therefore have
 \[|B_r^{\rho'_n}(x)| \leq \left((n+1)\exp(n r^\gamma) \right)^{S(n)}
   \in \exp(O(r^\gamma)).\]
Here $n$ is fixed.
\end{proof}

We remark that the method of making metrics with uniformly bounded volume growth in the above proof is not special to this sort of
intermediate growth; it just uses that the growth bounds we are using have
sufficient closure. In particular, that class of functions
$\exp(O(r^\gamma))$ is closed under taking constant powers and constant multiples.

\section{F{\o}lner tilings and layerings}
\label{sec:folner-tiling}

In this section, we will show that we can use F{\o}lner layerings to obtain
F{\o}lner tilings. We begin by recalling the definition of a F{\o}lner tiling:

\begin{defn}
  Let $r,\ve > 0$.
  An \textbf{$(r,\ve)$-F{\o}lner tiling} of an extended metric space $X$
  is a cover $\calU$ of $X$ by disjoint $(r,\ve)$-F{\o}lner sets of uniformly
  bounded diameter. An \textbf{$\ve$-F{\o}lner tiling} is a $(1,\ve)$-F{\o}lner
  tiling.
\end{defn}
Equivalently, an $(r,\ve)$-F{\o}lner tiling is a uniformly
bounded-diameter equivalence relation on $X$ whose classes are
$(r,\ve)$-F{\o}lner.

Recall also the following definition of a setwise F{\o}lner metric space:

\begin{defn}[\cite{ET25}]
A locally finite extended metric space $(X,\rho)$ is \define{setwise
F{\o}lner} if for every $r,\ve > 0$, there exists some $R > 0$ such that
for every nonempty finite set $A \subset X$ there is an $(r,\ve)$-F{\o}lner set $F$
such that $A \subset F \subset B_R(A)$. 
\end{defn}

Recall that $X$ is \define{F{\o}lner} if for every $r, \ve > 0$, there exists $R
> 0$ such that for every $x \in X$ there is an $(r,\ve)$-F{\o}lner set $F$ so
that $x \in F \subset B_R(x)$. We note for context that Elek and Tim\'ar
\cite{ET25} have shown that every bounded degree graph is F{\o}lner if and
only if it is setwise F{\o}lner, and their proof extends straightforwardly
to show that an extended metric space of uniformly bounded volume growth is
F{\o}lner if and only if it is setwise F{\o}lner. However, we will not need
this fact for any of our arguments.

Metric spaces of subexponential growth are well-known to be setwise
F{\o}lner by an easy argument similar to
\Cref{prop:subexp-Folner-balls}:
\begin{prop}\label{prop:subexp-setwise-Folner}
  Suppose $(X,\rho)$ is an extended metric space of uniformly
  subexponential volume growth. Then $(X,\rho)$ is setwise F{\o}lner. 
\end{prop}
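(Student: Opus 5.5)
We argue as in \Cref{prop:subexp-Folner-balls}, but now grow neighborhoods of an arbitrary finite set $A$ in steps of size $r$ and compare the total growth against $|A|$. Fix $r,\ve > 0$ and a nonempty finite set $A \subset X$. For $k \geq 0$ let $A_k \coloneqq B_{kr}(A)$, so $A_0 = A$, and note that $B_r(A_k) \subset A_{k+1}$ by the triangle inequality. If $A_k$ fails to be $(r,\ve)$-F{\o}lner for every $k < K$, then $|A_{k+1}| \geq |B_r(A_k)| > (1+\ve)|A_k|$ for each such $k$. Telescoping the ratios gives
\[
  (1+\ve)^K |A| < |A_K| = |B_{Kr}(A)|.
\]

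The main obstacle is that $|B_{Kr}(A)|$ must be bounded in terms of $|A|$ with a factor that does not depend on $A$. A naive bound such as $\vol(\diam A + Kr)$ cannot work, since $\diam A$ is unbounded. Instead we use the union bound
\[
  |B_{Kr}(A)| \leq \sum_{a \in A} |B_{Kr}(a)| \leq |A| \vol(Kr).
\]
Combining the two displays yields $(1+\ve)^K < \vol(Kr)$, and the factor $|A|$ cancels. Since the volume growth is uniformly subexponential, $\vol(Kr) \leq (1+\ve)^K$ for all sufficiently large $K$, and we fix one such $K$ depending only on $r$ and $\ve$. For this $K$ the assumption that all of $A_0, \ldots, A_{K-1}$ fail leads to a contradiction.

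Hence some $k < K$ has $A_k$ that is $(r,\ve)$-F{\o}lner. Each $A_k$ is nonempty, and it is finite because $|A_k| \leq |A|\vol(kr) < \infty$. Setting $F \coloneqq A_k$ gives $A \subset F \subset B_{Kr}(A)$, so $R \coloneqq Kr$ witnesses that $X$ is setwise F{\o}lner. The only point needing care is to use \emph{uniform} subexponential growth, which makes $\vol$ finite and makes $K$, and hence $R$, independent of $A$.
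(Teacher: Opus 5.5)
Your proof is correct and is essentially the paper's argument: grow $B_{kr}(A)$ in steps of $r$, use the union bound $|B_{Kr}(A)| \leq |A|\vol(Kr)$ so that the factor $|A|$ cancels, and choose $K$ (the paper's $t+1$) with $\vol(Kr) \leq (1+\ve)^K$, which forces some ratio $|B_{(k+1)r}(A)|/|B_{kr}(A)|$ to be at most $1+\ve$. The only difference is cosmetic: you argue by contradiction, while the paper applies pigeonhole to the telescoping product directly.
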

\begin{proof}
  Fix $r,\ve > 0$.
  By subexponential volume growth, choose a positive integer $t$ such that $\vol(r(t + 1)) \leq (1 + \ve)^{t+1}$.
  Since $B_r(A) = \bigunion_{x \in A} B_r(x)$, we have that
  \[
    \frac{|B_r(A)|}{|B_0(A)|}
    \frac{|B_{2r}(A)|}{|B_r(A)|}
    \cdots
    \frac{|B_{(t+1)r}(A)|}{|B_{tr}(A)|}
    = \frac{|B_{(t+1)r}(A)|}{|B_0(A)|}
    \leq \frac{\vol(r(t+1)) |A|}{|A|}
    \le (1 + \ve)^{t+1},
  \]
  so there is some $s \le t$ such that
  $\frac{|B_{r(s+1)}(A)|}{|B_{rs}(A)|} \le 1 + \ve$. So $F = B_{rs}(A)$ is the desired
  $(r,\ve)$-F{\o}lner set since $B_r(F) = B_r(B_{rs}(A))\subset B_{r(s +1)}(A)$.
\end{proof}

To finish, we show that if $X$ is setwise F{\o}lner and has Borel
F{\o}lner layerings, then it has Borel F{\o}lner tilings: 

\begin{lem}\label{packing-and-hull-implies-tiling}
  Suppose $(X,\rho)$ is a locally finite Borel extended metric space of
  uniformly bounded volume growth that is setwise F{\o}lner, and suppose $X$ has
  a Borel $(r,\ve)$-F{\o}lner layering for every
  $r,\ve > 0$. Then $X$ has a Borel $(r,\ve)$-F{\o}lner tiling for every $r,\ve >
  0$.
\end{lem}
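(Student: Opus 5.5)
Fix $r,\ve>0$; we build the tiling from a layering with much finer parameters. Setwise F{\o}lner applied with $(r,\ve/2)$ gives $R$ such that every nonempty finite $A$ lies in an $(r,\ve/2)$-F{\o}lner set $F$ with $A\subset F\subset B_R(A)$. Choose $r'\geq 2R+2r$ and a small $\delta>0$, to be fixed later in terms of $\ve$, $r$, $R$ and $\vol(R+r')$. Take a Borel $(r',\delta)$-F{\o}lner layering $\calF_0,\ldots,\calF_{n-1}$ of $X$, and let $\calF=\calF_0\cup\cdots\cup\calF_{n-1}$. As noted after \Cref{defn:Folner-layering}, $\calF$ is $r'$-separated, and the complement $Y=X\setminus\supp(\calF)$ lies within distance $r'$ of $\supp(\calF)$.

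The tiles are obtained by absorbing $Y$ into the sets of $\calF$. Borel-assign each $y\in Y$ to one set $A\in\calF$ with $\rho(y,A)\le r'$ and $y\in\bndry_{r'}^{X_i}(A)$ for the layer $i$ containing $A$, exactly as in the proof of \Cref{layering-complement}; such an $A$ exists for the same reason given there. Break ties using a Borel linear order, so the choice is Borel by Lusin--Novikov. Let $\hat A$ be $A$ together with its assigned points. The sets $\hat A$ partition $X$, and each has diameter at most $\diam(A)+2r'$, which is uniformly bounded. Moreover $|\hat A\setminus A|\le|\bndry^{X_i}_{r'}(A)|\le\delta|A|$.

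The main obstacle is that $\hat A$ need not be $(r,\ve)$-F{\o}lner. Its $r$-boundary in $X$ may contain points assigned to neighbouring sets, and $A$ is only F{\o}lner relative to $X_i$, not to $X$.

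\begin{itemize}
\item \emph{Trying the layering directly.} The first idea is to use the F{\o}lner property in $X_i$: points of $B_r(A)\setminus A$ lying outside $X_i$ are in $B_{r'}(\supp(\calF_j))$ for some $j<i$. But the proof of \Cref{layering-complement} shows these are not controlled by $A$ itself, so this does not work directly.
\item \emph{Choosing the tile.} Instead, for each $A$ let $K_A$ be the setwise-F{\o}lner hull of $\hat A$, an $(r,\ve/2)$-F{\o}lner set with $\hat A\subset K_A\subset B_R(\hat A)$. Make the tiles disjoint by processing layers in reverse order, taking $T_A=K_A\setminus\bigcup_{\text{later }A'}T_{A'}$ together with leftover points.
\end{itemize}

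Because $r'\ge 2R+2r$, the hulls $K_A$ of sets in the same layer are disjoint and far apart. Interactions therefore occur only across different layers, and only through points within $R+r'$ of $Y\cup\bndry$-regions. By \Cref{layering-complement} these regions have total size at most $\delta\cdot\vol(R+r'+\cdots)$ times $|A|$-mass nearby. A double-counting argument, charging each lost or extra boundary point of $T_A$ to a nearby $F{\o}lner$ set of the layering, should then give $|\bndry_r(T_A)|\le \ve|T_A|$ once $\delta\,\vol(2R+2r'+r)\le \ve/4$.

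This counting is delicate, and it is the step I expect to cost the most effort. It may be necessary to also use that each $T_A\supseteq A$, so $|T_A|\ge|A|$, and to sum the charges over the at most $\vol$-many sets within bounded distance. If the per-tile estimate fails, I would instead discard the bad tiles and merge them into neighbours, and iterate.
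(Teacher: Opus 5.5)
There is a genuine gap, and it is exactly the step you flag: the bound $|\bndry_r(T_A)|\le\ve|T_A|$ is never proved (``should then give''), and the structural claim meant to support it is false. Distinct $A,A'\in\calF_i$ are only guaranteed to satisfy $\rho(A,A')>r'$. The points you attach to $A$ (resp.\ $A'$) may be any points of $Y\cap X_i$ within $r'$ of $A$ (resp.\ $A'$). So the points between $A$ and $A'$ can be split between them, and $\hat A$ and $\hat A'$ can be arbitrarily close. Then nothing prevents $K_A\subseteq B_R(\hat A)$ and $K_{A'}\subseteq B_R(\hat A')$ from overlapping, however large you take $r'$. Hence same-layer hulls need not be disjoint, and interactions are not confined to different layers. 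Your reverse-order rule only subtracts tiles of later layers, so it does not even produce a partition. You also give no bound on how much of $K_A$ is lost to overlaps relative to $|T_A|$. The fallback of merging bad tiles and iterating is not an argument either.

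The missing idea is to perform your two operations in the opposite order, which is what the paper does. Take the setwise F{\o}lner hull $K_A$ of the original set $A\in\calF$, not of $\hat A$. The union of all families of a layering is $r'$-separated, so taking $r'\ge 2R$ already makes the hulls $K_A\subseteq B_R(A)$ pairwise disjoint across all layers. Moreover each $K_A$ is F{\o}lner in $X$ itself, which removes your worry that $A$ is only F{\o}lner in $X_i$. Only afterwards absorb the points lying in no hull. Such a point is not in $\supp(\calF)$, so it lies in $\bndry^{X_i}_{r'}(A)$ for some $i$ and $A\in\calF_i$. A Borel assignment then gives disjoint sets $C_A\subseteq\bndry^{X_i}_{r'}(A)$ with $|C_A|\le\delta|A|\le\delta|K_A|$; this is your own estimate. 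Since $\bndry_r(K_A\cup C_A)\subseteq\bndry_r(K_A)\cup B_r(C_A)$, you get $|\bndry_r(K_A\cup C_A)|\le(\ve/2+\vol(r)\delta)|K_A|\le\ve|K_A\cup C_A|$ as soon as $\vol(r)\delta\le\ve/2$. No cross-layer counting is needed. The paper uses a $(2R,\delta)$-F{\o}lner layering, $(r,\delta)$-F{\o}lner hulls, and $\delta=\ve/(1+\vol(r))$.
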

\begin{proof}
Suppose $r,\ve > 0$, and define $\delta \coloneqq \ve/(1 + \vol(r))$. We will
construct a $(r,\ve)$-F{\o}lner tiling.

Since $X$ is
setwise F{\o}lner, choose a sufficiently large $R$ so that for every nonempty
finite $A$, there is a $(r,\delta)$-F{\o}lner set $\phi(A)$ such that
$A \subseteq \phi(A) \subseteq B_R(A)$. 
By Lusin-Novikov uniformization, we can choose such a
function $\phi$ such that $\phi$ is Borel, since there are 
finitely many $(r,\delta)$-F{\o}lner sets satisfying
$A \subseteq F \subseteq B_R(A)$. Without loss of generality we may assume
$R \geq r/2$.

Let $\calF = (\calF_0, \ldots, \calF_{n-1})$ be a
$(2R,\delta)$-F{\o}lner layering of $X$.
The sets $\{\phi(F) \colon F \in \calF_i \text{ for some $i<n$}\}$ are disjoint, and are all
$(r,\delta)$-F{\o}lner by definition of $\phi$. To finish the proof,
we will enlarge each of these sets a small amount to make a tiling that
covers all of $X$, while keeping the property that these new enlarged sets are
$(r,\ve)$-F{\o}lner.

Now $X \setminus \supp(\calF)$ is covered by $\{\bndry_{2R}^{X_i}(F) \colon F \in
\calF_i \land i<n\}$.
If $A$ is $(r,\delta)$-F{\o}lner and $|B| \leq \delta |A|$, then $A \cup B$ is
$(r,(\vol(r)+1)\delta)$-F{\o}lner, since $|A| \leq |A \cup B|$, and
$|\bndry_r(A \cup B)| \leq |\bndry_r(A)| + |\bndry_r(B)| \leq \delta
|A| + \vol(r) \delta |A|$. So we can finish by enlarging the sets $\phi(F)$ to
include part of $\bndry_{2R}^{X_i}(F)$ so that they form a tiling, and these
sets will still be $(r,\ve)$-F{\o}lner.

Precisely, let $\psi$ be a Borel function on
$X \setminus \supp(\{\phi(F) \colon F \in \calF_i \text{ for some $i<n$}\})$
such that if $\psi(x) = (i,F)$, then $F \in \calF_i$ and
$x \in \bndry^{X_i}_{2R}(F)$. So $\psi$ selects for each remaining $x$ some
$F \in \calF_i$ whose $2R$-boundary contains it. Then our desired tiling is
$\{\phi(F) \cup \{x \colon \psi(x) = (i,F)\} \colon F \in \calF_i \text{ for some $i<n$}\}$.
\trivial{For $F \in \calF_i$, put
$C_F \coloneqq \{x \colon \psi(x)=(i,F)\}$. Then
$C_F \subseteq \bndry_{2R}^{X_i}(F)$, so
$|C_F| \leq \delta|F| \leq \delta|\phi(F)|$. Applying the preceding estimate
with $A=\phi(F)$ and $B=C_F$ shows that $\phi(F)\union C_F$ is
$(r,(\vol(r)+1)\delta)=(r,\ve)$-F{\o}lner. The sets $\phi(F)$ are disjoint,
and $\psi$ assigns every remaining point to exactly one of them, so these
enlarged sets form a disjoint Borel cover of $X$. They have uniformly bounded
diameter since $\phi(F) \subseteq B_R(F)$ and
$C_F \subseteq B_{2R}(F)$.}
\end{proof}

Putting together \Cref{cor:subexp-layering}, \Cref{prop:subexp-setwise-Folner} and
\Cref{packing-and-hull-implies-tiling} proves Theorem B from the introduction:

\begin{thm}
  Suppose $(X,\rho)$ is a Borel extended metric space of uniformly
  subexponential volume growth. Then for every $r,\ve >
  0$, there is a Borel $(r,\ve)$-F{\o}lner tiling of $(X,\rho)$.\qed
\end{thm}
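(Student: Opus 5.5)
The statement is obtained by chaining three earlier results, so the plan is mainly to check that each hypothesis is met. Fix $r,\ve>0$ and let $(X,\rho)$ have uniformly subexponential volume growth.

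\emph{Step 1: local finiteness and uniformly bounded growth.} Subexponential growth $\vol(t)\in e^{o(t)}$ means $\vol(t)<\infty$ for every $t$ (at least for large $t$, hence for all $t$ by monotonicity). So every ball is finite, $X$ is locally finite, and $X$ has uniformly bounded volume growth. These are the standing assumptions of \Cref{packing-and-hull-implies-tiling}.

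\emph{Step 2: setwise F{\o}lner.} By \Cref{prop:subexp-setwise-Folner}, $X$ is setwise F{\o}lner.

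\emph{Step 3: Borel F{\o}lner layerings.} By \Cref{cor:subexp-layering}, $X$ has a Borel $(r',\ve')$-F{\o}lner layering for every $r',\ve'>0$. This applies in particular with the parameters $(2R,\delta)$ used inside the proof of \Cref{packing-and-hull-implies-tiling}. There $R\ge r/2$ comes from setwise F{\o}lnerness and $\delta=\ve/(1+\vol(r))$.

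\emph{Step 4: conclude.} Applying \Cref{packing-and-hull-implies-tiling} gives a Borel $(r,\ve)$-F{\o}lner tiling.

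The only point needing care is Step 3. \Cref{cor:subexp-layering} rescales the metric to $\rho'=(2R)^{-1}\rho$ and invokes \Cref{layering-existence}. For that, I would check that the rescaled space still satisfies $\vol^{\rho'}(d+1)\le(1+\delta)^{d+1}$ for some integer $d$. This holds because $\vol^{\rho'}(t)=\vol^\rho(2Rt)$ is still $e^{o(t)}$, and $\log(1+\delta)>0$ is a fixed positive rate.

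I do not expect any genuine obstacle here. The substantive work has already been done in those three results.
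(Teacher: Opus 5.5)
Your proposal is correct and is exactly the paper's argument: the theorem is obtained by combining \Cref{cor:subexp-layering}, \Cref{prop:subexp-setwise-Folner}, and \Cref{packing-and-hull-implies-tiling}. Your Step 1 makes explicit the local finiteness and uniformly bounded volume growth hypotheses needed by \Cref{packing-and-hull-implies-tiling}, which the paper leaves implicit.
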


\begin{remark}
  We remark that the proof of \Cref{packing-and-hull-implies-tiling} can be
  easily modified to show analogous clopen tilings and topological almost
  finiteness results in the setting of
  topological dynamics, since our constructions are entirely ``local''. The modifications of our constructions in \Cref{layering-existence} and \Cref{packing-and-hull-implies-tiling} required to prove the topological dynamics analogues exactly follow
  similar modifications
  discussed in \cite[Section 10]{CJMSTD}, showing that many of the constructions
  of that paper easily adapt to the realm of topological dynamics.
\end{remark}

\subsection{An equivalence between layerings and tilings}
\label{subsec:tiling-layer-equiv}

To finish this section, we expand on the above, and prove an equivalence
between tilings and layerings in arbitrary Borel extended metric spaces of
uniformly bounded volume growth.

We can now show that the existence of Borel F{\o}lner tilings and
F{\o}lner layerings are equivalent:

\begin{thm}
  Suppose $X$ is a Borel extended metric space of uniformly
  bounded volume growth. Then $X$ has
  a Borel $(r,\ve)$-F{\o}lner tiling for every $r,\ve > 0$ iff $X$ is setwise
  F{\o}lner and has a
  Borel $(r,\ve)$-F{\o}lner layering for every $r,\ve > 0$.
\end{thm}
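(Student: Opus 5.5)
The reverse direction is exactly \Cref{packing-and-hull-implies-tiling}, so the plan is to prove the forward direction. Assume $X$ has a Borel $(r,\ve)$-F{\o}lner tiling for every $r,\ve>0$. We must show two things: that $X$ is setwise F{\o}lner, and that $X$ has Borel F{\o}lner layerings.

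\emph{Layerings.} Fix $r,\ve$ and let $\delta>0$ be small, to be chosen later. Take a Borel $(r,\delta)$-F{\o}lner tiling $\calT$ with diameters bounded by $D$. The tiles are not $r$-separated, so they cannot be used as a single layer directly. Consider the graph on $\calT$ joining $T\neq T'$ when $\rho(T,T')\leq r$. Each tile has at most $\vol(D)$ points, so by bounded volume growth this graph has degree at most $\vol(D+r)\vol(D)$. By \cite[Proposition 4.6]{KST99} it has a Borel proper coloring $\calT=\calT_0\cup\dots\cup\calT_{k-1}$ with $k$ finite, and each $\calT_j$ is $r$-separated.

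Now define the layers inductively with $X_0=X$. Let $\calF_j$ consist of the nonempty sets $T\cap X_j$ for $T\in\calT_j$ that are $(r,\ve)$-F{\o}lner in $X_j$; these are $r$-separated and subsets of $X_j$. Set $X_{j+1}=X_j\setminus B_r(\supp\calF_j)$. The sets we must control are the ``bad'' tiles, those $T\in\calT_j$ whose remnant $T\cap X_j$ is nonempty but not F{\o}lner in $X_j$. A remnant fails only if it was mostly eaten by earlier $r$-neighborhoods. The boundary of $T\cap X_j$ inside $X_j$ lies in $\bndry_r(T)$ together with points of $T$ removed near $\bndry_r(T)$, which is small in terms of $|T|$ but not in terms of $|T\cap X_j|$.

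I expect this step to be the main obstacle: the plan as stated does not yet cover $X$ by the $r$-neighborhoods of the layers, so it must be modified. The first thing I would try is to redefine $\calF_j$ to use the \emph{whole} tile $T$ when $T\subset X_j$, and to handle the remaining bad points by a second round. A point of $X$ missed by all the layers lies in a tile that was hit by an $r$-neighborhood of an earlier layer, hence lies within distance $D+r$ of the boundary region $\bigcup_T\bndry_r(T)$. This region has density $O(\delta\vol(D+r))$. Iterate the construction on the leftover subspace, taking a tiling with a smaller $\delta$ at each stage. After finitely many rounds, I would argue via a counting bound in the style of \Cref{layering-complement} and \Cref{prop:dim-estimate} that the leftover set is empty; this needs $\delta$ small relative to $\vol$ at the relevant radius, which bounded growth permits. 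The concatenation of all rounds gives the layering, since a F{\o}lner set in a later subspace is by definition a F{\o}lner subset of the current $X_i$. If this iteration does not close, the fallback is to allow layer sets that are unions of tile-pieces, shrinking each bad piece to a F{\o}lner subset using setwise F{\o}lnerness inside $X_j$.

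\emph{Setwise F{\o}lner.} Given $r,\ve$ and a finite $A$, take a tiling $\calT$ with parameters $(r,\delta)$ and diameter bound $D$. Let $F$ be the union of the tiles meeting $A$. Then $A\subset F\subset B_D(A)$, and $F$ is $(r,\delta)$-F{\o}lner because a disjoint union of F{\o}lner sets is F{\o}lner. So we may take $R=D$, and this part is easy.
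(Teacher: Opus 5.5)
\textbf{Gap in the layering half.} Your setwise F{\o}lner argument and your appeal to \Cref{packing-and-hull-implies-tiling} are fine. Your first plan for the layerings is also exactly the right construction (and the one the paper uses): color the tiles into $r$-separated classes $\mathcal{T}_0,\dots,\mathcal{T}_{k-1}$, and let $\calF_j$ consist of the nonempty remnants $T\cap X_j$ for $T\in\mathcal{T}_j$. You abandon it because you believe the eaten part of a tile is ``small in terms of $|T|$ but not in terms of $|T\cap X_j|$''. That belief is false, and this is the missing idea.

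\textbf{Why remnants are never mostly eaten.} Let $x\in T\setminus X_j$. Then $x$ lies within distance $r$ of some $y\in\supp(\calF_i)$ with $i<j$. That layer consists of pieces of tiles other than $T$, and tiles are disjoint, so $y\notin T$. Hence $y\in\bndry_r(T)$, and therefore
\[
  T\setminus X_j\subset T\cap B_r(\bndry_r(T)),\qquad |T\setminus X_j|\le \vol(r)\,|\bndry_r(T)|\le \vol(r)\,\delta\,|T| .
\]
So $|T\cap X_j|\ge(1-\vol(r)\delta)|T|$. This gives
$|B^{X_j}_r(T\cap X_j)|\le|B_r(T)|\le(1+\delta)|T|\le\frac{1+\delta}{1-\vol(r)\delta}\,|T\cap X_j|$.
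Choose $\delta$ with $\frac{1+\delta}{1-\vol(r)\delta}\le1+\ve$. Then every nonempty remnant is $(r,\ve)$-F{\o}lner in $X_j$, so there are no bad tiles at all. Covering is then immediate: a point of $T\in\mathcal{T}_j$ either lies in $T\cap X_j\in\calF_j$, or it was already removed, i.e.\ it lies in $B_r(\supp\calF_i)$ for some $i<j$.

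\textbf{Why your replacement does not close.} Iterating ``on the leftover subspace with a smaller $\delta$'' needs F{\o}lner tilings of that subspace. The hypothesis only gives tilings of $X$, and restricting a F{\o}lner tile of $X$ to a subspace need not give a F{\o}lner set there; that is exactly the issue you were worried about. The counting argument modeled on \Cref{layering-complement} and \Cref{prop:dim-estimate} is also not carried out, and it is not clear it works. There the diameter bound and $\ve$ are fixed across rounds, whereas here the tile diameters grow as $\delta$ shrinks, so the product bound need not go below $1$. Your fallback fails as well: setwise F{\o}lnerness produces F{\o}lner \emph{supersets} of a given set, not F{\o}lner subsets, and it is only known for $X$, not for the subspaces $X_j$. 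As written, the layering half of the forward direction is therefore unproved. With the single estimate above, your original one-pass construction already works and no iteration is needed.
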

\begin{proof}
We have already proved the reverse implication in
\Cref{packing-and-hull-implies-tiling}. So we prove the two forward
implications. Suppose $X$ has a Borel $(r,\ve)$-F{\o}lner tiling for every
$r,\ve > 0$. Given such a Borel $(r,\ve)$-F{\o}lner tiling $\calF$ of $X$, of diameter bounded
by $d$, if $A \subset X$ is nonempty and finite, let $F = A * \calF$ be the union of the
sets in $\calF$ meeting $A$. Then $F$ is $(r,\ve)$-F{\o}lner since it is a union
of disjoint $(r,\ve)$-F{\o}lner sets, and $F \subset B_d(A)$. So $X$ is setwise
F{\o}lner. 
So it remains to prove the existence of F{\o}lner layerings.

Fix $r,\ve>0$, and choose $\delta$ sufficiently small so that $\frac{1 +
\delta}{1 - \vol(r) \delta} \leq 1 + \ve$.
Let $\calA$
be a Borel $(r,\delta)$-F{\o}lner tiling of $X$. Fix $d$ bounding the diameters
of the members of $\calA$. The Borel graph on $\calA$ in which distinct
$A,A'\in\calA$ are adjacent when $\rho(A,A')\leq r$ has bounded degree (in
particular bounded by $\vol(d + r)-1$), and hence has a Borel $n$-coloring for
some $n$. Thus, we can decompose $\calA$ into sets
\[
  \calA=\calA_0\sqcup\cdots\sqcup\calA_{n-1}
\]
such that each $\calA_i$ is $r$-separated.

We now recursively define a $(r,\ve)$-F{\o}lner layering $\calF_0,\ldots,\calF_{n-1}$ of
$X$. Define $X_0=X$ and, for $i<n$,
  $\calF_i\coloneqq
  \{A\cap X_i:A\in\calA_i\text{ and }A\cap X_i\neq\emptyset\}$ and
  $X_{i+1}\coloneqq X_i\setminus B_r(\supp(\calF_i))$.
So each $\calF_i$ has diameter bounded by $d$ and is $r$-separated.

We verify that for each $A\in\calA_i$, the set $F \coloneqq A \cap X_i$
is $(r,\ve)$-F{\o}lner. Now $A \setminus F = A \setminus X_i = A \inters
\bigunion_{j<i} B_r(\supp(\calF_j))
  \subset B_r^X \left(\bndry_r^X(A)\right)$
  and so 
\[
  |F|\geq |A|-\vol(r)|\bndry_r^X(A)|
  \geq (1-\vol(r)\delta)|A|.
\]
Since $|B_r^X(F)| \leq |B_r^X(A)| \leq (1 + \delta)|A| \leq \frac{1 +
\delta}{1 - \vol(r) \delta} |F|$,
we have that $F$ is $(r,\ve)$-F{\o}lner in $X$, and hence $(r,\ve)$-F{\o}lner in
$X_i$.

To finish, note that $B_r\left(\supp(\calF_0,\ldots,\calF_{n-1})\right)=X$, since if $x
\in X$, then $x \in A$ for some $A \in \calA$, and let $i$ be such that $A \in
\calA_i$. Then either $x \in A \inters X_i$ and hence $x \in \supp(\calF_i)$ by
definition of $\calF_i$, or
$x \in A \setminus X_i = A \inters \bigunion_{j<i} B_r(\supp(\calF_j))$, so
$x \in B_r(\supp(\calF_0, \ldots, \calF_{n-1}))$.
\end{proof}

\section{Separation index growth, subexponential dimension growth, and
\texorpdfstring{$\mu$}{mu}-hyperfiniteness}
\label{sec:mu-hyperfiniteness}

Now we turn to the setting of CBERs on standard probability
spaces, and we begin translating a standard characterization of
$\mu$-hyperfiniteness of locally finite Borel graphs into a characterization of
$\mu$-hyperfiniteness of the equivalence relations $E_\rho$ of locally finite
Borel extended metric spaces $(X,\rho)$, where $\mu$ is a Borel probability
measure on $X$. Recall that we
say that a CBER $E$ is $\mu$-hyperfinite if and only if there is a conull Borel
set $X' \subset X$ such that $E \restriction X'$ is hyperfinite. Equivalently,
if there is a conull $E$-invariant Borel set $X'' \subset X$ such that $E
\restriction X''$ is hyperfinite.
We will then use
our characterization below to prove that if $E_\rho$ is not $\mu$-hyperfinite,
then $(X,\rho)$ has exponential $\mu$-measurable separation index growth.

If $X$ is a locally finite Borel extended metric space and $\mu$ is a finite Borel measure on
$X$, we say that $X$ is an \define{$(s,\ve)$-expander} with respect to $\mu$ if for every
Borel $2s$-separated family
$\calA \subset \powset(X)$ of finite sets, we have $\mu(B_s(\supp
\calA)) \geq (1 + \ve) \mu(\supp \calA)$.

We have the following characterization of $\mu$-hyperfiniteness for locally
finite extended metric spaces, which is an easy adaptation of similar well-known lemmas for group actions and Borel graphs. Item (3) in this characterization is due to 
Elek \cite{E12}, refining an earlier result of Kaimanovich \cite{Kai}. 

\begin{lem}
  \label{lem:mu-hyperfinite-equiv}
Suppose $\rho$ is a locally finite Borel extended metric on a standard probability space
$(X,\mu)$. Then the following are equivalent:
\begin{enumerate}
\item $E_\rho$ is $\mu$-hyperfinite 
\item For every $\ve > 0$ and every $s >
0$, there is a $s$-separated Borel family $\calA \subset \powset(X)$ of finite sets
such that $\mu(\supp(\calA)) \geq 1 - \ve$.
\item For every Borel subset $X' \subset X$ of positive measure and every $\ve >
0$ and $s > 0$, $X'$ is not an $(s,\ve)$-expander.
\end{enumerate}
\end{lem}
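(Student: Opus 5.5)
I will prove the cycle (1)$\Rightarrow$(2)$\Rightarrow$(3)$\Rightarrow$(1). The first two implications are soft. The third is the analogue of Elek's argument and is where the real work lies.

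\emph{(1)$\Rightarrow$(2).} Let $E_\rho\restriction X'' = \bigcup_k F_k$ with $X''$ invariant and conull, and each $F_k$ a Borel equivalence relation with finite classes. Fix $s$ and $\ve$. For $x \in X''$ the ball $B_s(x)$ is finite and contained in $[x]_{E_\rho}$, so $B_s(x)\subset [x]_{F_k}$ for all large $k$. Hence the Borel sets $Y_k=\{x : B_s(x)\subset [x]_{F_k}\}$ increase to $X''$, and we may choose $k$ with $\mu(Y_k)\geq 1-\ve$.

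Now let $\calA$ consist of the sets $[x]_{F_k}\cap Y_k$ that are nonempty. These sets are finite, and $\calA$ is Borel. To see that $\calA$ is $s$-separated, suppose $y\in [x]_{F_k}\cap Y_k$, $y'\in[x']_{F_k}\cap Y_k$, and $\rho(y,y')\le s$. Then $y'\in B_s(y)\subset[y]_{F_k}$, so the two $F_k$-classes coincide. Finally $\supp(\calA)=Y_k$, which has measure at least $1-\ve$.

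\emph{(2)$\Rightarrow$(3).} Suppose $X'$ has positive measure and is an $(s,\ve)$-expander. Apply (2) with separation $2s$ and tolerance $\eta$ to get a family $\calA$. The family $\calA'=\{A\cap X' : A\in\calA\}\setminus\{\emptyset\}$ is a Borel $2s$-separated family of finite subsets of $X'$, with $\mu(\supp\calA')\geq \mu(X')-\eta$. The expander property then forces
\[
\mu(X')\geq \mu(B^{X'}_s(\supp\calA'))\geq (1+\ve)(\mu(X')-\eta),
\]
which is false once $\eta$ is small enough. So $X'$ is not an $(s,\ve)$-expander.

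\emph{(3)$\Rightarrow$(1).} I expect this to be the main obstacle. The plan is first to reduce $\mu$-hyperfiniteness to condition (2), and then prove (2) from (3) by an exhaustion argument.

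\textbf{Reduction to (2).} Assume (2) holds. For each $k$, take a $k$-separated family $\calA_k$ of finite sets with $\mu(\supp\calA_k)\geq 1-2^{-k}$. Extend $\calA_k$ by singletons to get finite Borel equivalence relations $E_k$. By Borel–Cantelli, almost every $x$ lies in $\supp\calA_k$ for all large $k$. If $\rho(x,x')<\infty$ and both points lie in $\supp\calA_k$ with $k>\rho(x,x')$, then separation puts them in the same set of $\calA_k$. So the relations $\bigcap_{k\geq m}E_k$ increase and exhaust $E_\rho$ on an invariant conull set. This gives hypersmoothness there, hence $\mu$-hyperfiniteness.

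\textbf{Proving (2) from (3).} Fix $s$ and $\ve$. Among all Borel $s$-separated families of finite sets, consider the supremum $m$ of $\mu(\supp\calA)$. I do not expect to build an $m$-achieving family by a naive increasing union: adding sets near existing ones breaks separation. Instead I will follow Elek's approach and use non-expansion to push measure up geometrically.

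Let $\calA$ be a family with large measure, and let $Z=X\setminus B_{2s}(\supp\calA)$. If $\mu(Z)>0$, apply (3) to $Z$ with scale $2s$ and a small $\ve'$. This yields a Borel $4s$-separated family $\calC$ of finite subsets of $Z$ with
\[
\mu(B^Z_{2s}(\supp\calC))<(1+\ve')\mu(\supp\calC).
\]
Then $\calA\cup\calC$ is $s$-separated, and its support gains measure $\mu(\supp\calC)$. The delicate point is that this gain could be tiny. To control it, I will use a maximality/exhaustion argument by Zorn-style transfinite bookkeeping on measures. Alternatively, I will argue by contradiction: take $X'$ to be the part of the space that no good family can reach, and show that $X'$ would itself be an expander, contradicting (3).

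The remaining difficulty is that points of $B_{2s}(\supp\calA)\setminus\supp\calA$ are wasted. To handle this I will instead enlarge sets via $B_s$ and merge them, so that the waste is controlled by the non-expansion ratio $\ve'$ at each step. Getting these estimates to close, with total waste bounded by $\ve$, is the step I expect to require the most care.
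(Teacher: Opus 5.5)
\textbf{There is a genuine gap, in (3)$\Rightarrow$(2).} The rest is fine: your (1)$\Rightarrow$(2), your (2)$\Rightarrow$(3), and your reduction of (1) to (2) are correct. One harmless imprecision in the reduction: the Borel--Cantelli set $G$ of points lying in $\supp\calA_k$ for all large $k$ is conull but not obviously $E_\rho$-invariant. That does not matter, since hyperfiniteness of $E_\rho\restriction G$ already gives $\mu$-hyperfiniteness.

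The real problem is that (3)$\Rightarrow$(2) is never actually proved. You correctly name the two obstacles, tiny gains and the wasted region around $\supp\calA$, but you leave both open. The framework you set up also cannot close as stated. Maximizing $\mu(\supp\calA)$ over \emph{all} $s$-separated families gives you no way to use (3). The supremum need not be attained. Worse, if a near-maximizing $\calA$ has $X\setminus B_s(\supp\calA)$ null while $B_s(\supp\calA)\setminus\supp\calA$ carries the missing measure, there is no positive-measure set left to which (3) can be applied. Adding sets then cannot improve $\calA$; you would have to rebuild it.

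\textbf{The missing idea: put the waste control into the class you maximize over.}
\begin{enumerate}
\item Fix $\delta>0$ with $\delta/(1+\delta)<\ve$. Consider only Borel $s$-separated families $\calA$ of finite sets with $\mu(B_s(\supp\calA))\le(1+\delta)\mu(\supp\calA)$.
\item This class is closed under increasing countable unions, by continuity of measure.
\item Run the usual exhaustion. Choose $\calA_{n+1}\supseteq\calA_n$ in the class with $\mu(\supp\calA_{n+1})$ within $2^{-n}$ of the supremum over all extensions of $\calA_n$ in the class, and take the union. This gives a member $\calA$ that cannot be extended within the class with a gain of positive measure.
\item Suppose $X'=X\setminus B_s(\supp\calA)$ had positive measure. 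Then (3) gives a Borel $2s$-separated family $\calA'\subset\powset(X')$ with $\mu(B^{X'}_s(\supp\calA'))<(1+\delta)\mu(\supp\calA')$. The strict inequality forces $\mu(\supp\calA')>0$.
\item Every point of $\supp\calA'$ is at distance greater than $s$ from $\supp\calA$, so $\calA\cup\calA'$ is $s$-separated.
\item The inclusion $B_s(\supp(\calA\cup\calA'))\subseteq B_s(\supp\calA)\cup B^{X'}_s(\supp\calA')$ shows that $\calA\cup\calA'$ stays in the class. This contradicts maximality.
\item Hence $B_s(\supp\calA)$ is conull, and so $\mu(\supp\calA)\geq 1/(1+\delta)>1-\ve$.
\end{enumerate}

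The size of the individual gains never matters here. The final bound comes from the invariant, not from adding up gains. This is the paper's argument. Your variant with $Z=X\setminus B_{2s}(\supp\calA)$ and (3) applied at scale $2s$ also works, provided you impose the invariant at scale $2s$.
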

\begin{proof}
(1) $\Rightarrow$ (3):
Suppose $(F_n)_{n \in \N}$ are finite Borel equivalence relations witnessing
that $E_\rho$ is hyperfinite on a conull set. Without loss of generality, we may
assume $X'$ is contained in this conull set. By restricting to $X'$, these
equivalence relations also witness that $E_{\rho \restriction X'}$ is
hyperfinite. Pick $r > 2s$, and for each $n$, let 
\[\calA_n \coloneqq \{\{y \in X' \colon B^{X'}_{r}(y) \subset [x]_{F_n}\} \colon x \in X'\}.\]
So $\calA_n$ consists of the $r$-interiors of the $F_n \restriction X'$-classes. Now the
collection $\calA_n$ is $r$-separated and hence has separation greater than
$2s$, since each $F_n$-class meets at most one element $A \in \calA_n$, and
$B_r(A)$ is contained in this $F_n$-class. Every $x \in X'$ is in
$\supp(\calA_n)$ for some $n$, since $B^{X'}_{r}(x)$ is a finite set of points
which must all eventually be $F_n$-related to $x$. Hence $\mu(\supp(\calA_n))
\to \mu(X')$ as $n \to \infty$, and so taking $\calA_n$ with
$\mu(\supp(\calA_n)) > \frac{\mu(X')}{1 + \ve}$ witnesses that $X'$ is not an
$(s,\ve)$-expander. 

(3) $\Rightarrow$ (2):
Fix $s > 0$ and $\ve > 0$, and choose $\delta > 0$ such that $\delta/(1+\delta) < \ve$.
Suppose that for all positive measure sets $X' \subset X$, $X'$ is not an $(s,\delta)$-expander. 
Let $\calA$ be a maximal (modulo $\mu$-null sets) Borel family of $s$-separated finite sets such that
$\mu(B_s(\supp(\calA))) \leq (1 + \delta) \mu(\supp(\calA))$. Such a
collection must exist by a measure exhaustion argument\footnote{
Recall the principle of measure exhaustion: there cannot be uncountably many
disjoint sets of positive measure in a Borel probability space (see e.g.
\cite[Section 215]{F03}). This is true because any uncountable sequence of positive real
numbers has infinite sum; otherwise it would contain only finitely many
numbers greater than $1/n$ for each positive integer $n$.
So any wellordered chain of Borel sets that is strictly increasing in
measure in a
$\sigma$-finite measure space must be countable, and so its union is Borel. 
}. We claim that
$B_s(\supp(\calA))$ is conull, and hence $(2)$ is satisfied. Otherwise, let $X' = X \setminus B_s(\supp(\calA))$. Then we can find some Borel
$s$-separated family of finite sets $\calA' \subset \powset(X')$ whose support has
positive measure such that
$\mu(B_s^{X'}(\supp(\calA'))) \leq (1 + \delta) \mu(\supp(\calA'))$. Note by the definition of $X'$, every point of $\supp(\calA')$ has distance greater than $s$ from $\supp(\calA)$. So
$B_s(\supp(\calA' \union \calA)) \subset B_s(\supp(\calA)) \union B_s^{X'}(\supp(\calA'))$, and
$\calA' \union \calA$ is $s$-separated, contradicting the maximality of $\calA$.

(2) $\Rightarrow$ (1): For each $n$, let $\calA_n$ be a Borel $2n+1$-separated family
such that $\mu(\supp(\calA_n)) \geq 1 - \frac{1}{2^n}$, and let
$\calA'_n = B_n(\calA_n)$, so $\calA'_n$ is $1$-separated. Then let $E_n$
be the equivalence relation whose classes are the elements of $\calA'_n$,
combined with singletons for every $x \notin \supp(\calA'_n)$. 
So $x \mathrel{E_n} y$ if either $x = y$, or
there is an $A \in \calA'_n$ such that $x,y \in A$. Note $E_n$ is a finite Borel
equivalence relation on $X$.

Define $F_n = \bigcap_{k \geq n} E_k$ to
be their tail intersections, so $F_n$ is an increasing sequence of finite
Borel equivalence relations. The equivalence relations $F_n$
will witness that $E_\rho$ is $\mu$-hyperfinite by a Borel-Cantelli argument. Fix $N$. For all $n \geq N$, the $\mu$-measure of the set
of $x$ such that there is some $y \in B_N(x)$ such that $x
\not\mathrel{E_n} y$ is at most $\frac{1}{2^n}$. Hence, by the union bound, for all $m \geq N$, the
measure of the set of $x$ such that there is some $y \in B_N(x)$ such that
$x \not\mathrel{F_m} y$ is at most $\sum_{n=m}^\infty \frac{1}{2^n} = \frac{1}{2^{m-1}}$. Hence, for a conull set of $x$, for all $y \in B_N(x)$ we have 
$x \mathrel{\bigunion_n F_n} y$. Since this is true for every $N$, $(F_n)_{n \in \N}$ witnesses the $\mu$-hyperfiniteness of $E_\rho$. 
\end{proof}

We have the following corollary: if $E_\rho$ is not $\mu$-hyperfinite, then
$\si^X_B$ has exponential growth, and hence so does $\sdim_B^X$. Indeed, we
can characterize $\mu$-hyperfiniteness for locally finite Borel extended metric
spaces via their separation index growth, which extends a characterization due
to Felix Weilacher \cite{Weil} who proved the equivalence of (1) and (2) below.
This is Theorem E from the introduction:

\begin{cor}
  \label{cor:mu-hyperfinite-equiv}
Suppose $\rho$ is a locally finite Borel extended metric on a standard probability space
$(X,\mu)$. Then the following are equivalent:
\begin{enumerate}
\item $E_\rho$ is $\mu$-hyperfinite.
\item $\asi_\mu(X) \leq 1$. 
\item $\si_\mu^X$ has non-exponential growth.
\end{enumerate}
\end{cor}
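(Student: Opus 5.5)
The plan is to prove the cycle (1) $\Rightarrow$ (2) $\Rightarrow$ (3) $\Rightarrow$ (1).

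For (1) $\Rightarrow$ (2) I will invoke Weilacher's theorem \cite{Weil}, which the paper explicitly credits with the equivalence of (1) and (2), rather than reprove it. Concretely, if $E_\rho$ is $\mu$-hyperfinite, then on an invariant conull Borel set $X'$ we have a hyperfinite locally finite Borel extended metric space. Weilacher's result gives, for every $s$, two Borel $s$-separated families of finite sets covering $X'$. Hence $\si^X_\mu(s) \leq 1$ for all $s$, and so $\asi_\mu(X) \leq 1$.

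The implication (2) $\Rightarrow$ (3) is immediate. The function $\si^X_\mu$ is nondecreasing in $s$ and takes values in $\N$, so (2) means it is eventually at most $1$. It is therefore bounded, and in particular not bounded below by $b^s$ for any $b>1$.

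The substance is (3) $\Rightarrow$ (1), which I will prove in contrapositive form: if $E_\rho$ is not $\mu$-hyperfinite, then $\si^X_\mu$ grows exponentially.
\begin{enumerate}
\item By \Cref{lem:mu-hyperfinite-equiv}, failure of (1) gives a Borel set $X' \subset X$ with $\mu(X')>0$ and fixed $s,\ve>0$ such that $X'$ is an $(s,\ve)$-expander. Every metric notion below refers to $\rho \restriction X'$.
\item Fix $t$ large, and let $\calU_0,\ldots,\calU_n$ be a Borel cover of an $E_\rho$-invariant $\mu$-conull Borel set $Z$ by $t$-separated families of finite-diameter (hence finite) sets, with $n = \si^X_\mu(t)$.
\item For each $i$, let $\calA_i = \{U \cap X' \colon U \in \calU_i\}$, discarding empty sets. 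Each $\calA_i$ is a Borel $t$-separated family of finite subsets of $X'$, and $\bigcup_i \supp(\calA_i) \supseteq X' \cap Z$, which is conull in $X'$.
\item Let $k = \lfloor t/(2s) \rfloor - 1$. For each $j \le k$, the family $B^{X'}_{js}(\calA_i)$ is still $2s$-separated in $X'$, since $t - 2js > 2s$. It is Borel and consists of finite sets.
\item Apply the expander property to $B^{X'}_{js}(\calA_i)$, using $B^{X'}_s(B^{X'}_{js}(A)) \subseteq B^{X'}_{(j+1)s}(A)$. Inductively this gives
\[\mu(X') \geq \mu\bigl(B^{X'}_{ks}(\supp \calA_i)\bigr) \geq (1+\ve)^k \mu(\supp \calA_i).\]
\item Summing over $i$, $\mu(X') \leq \sum_i \mu(\supp\calA_i) \leq (n+1)(1+\ve)^{-k}\mu(X')$. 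Since $\mu(X')>0$, this yields $\si^X_\mu(t)+1 \geq (1+\ve)^{t/(2s)-2}$.
\end{enumerate}
This is an exponential lower bound in $t$ with base $(1+\ve)^{1/(2s)}>1$. So $\si^X_\mu(t) \geq b^t$ for some $b>1$ and all sufficiently large $t$, contradicting (3).

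The step needing the most care is item 4: checking that the iterated neighborhoods stay $2s$-separated and remain Borel families of finite sets, so that the expander inequality actually applies at every step. This holds because all balls are taken in the restricted metric on $X'$, which is locally finite. The other delicate point is the reliance on \cite{Weil} for (1) $\Rightarrow$ (2). If a self-contained argument were wanted instead, I would try building two $s$-separated families from the $r$-interiors of the classes of a witnessing finite equivalence relation together with $r$-neighborhoods of their boundaries. I expect that to be the real obstacle.
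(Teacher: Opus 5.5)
Your proposal is correct and follows the paper's proof. Step (1)$\Rightarrow$(2) is cited from Weilacher, (2)$\Rightarrow$(3) is immediate, and (3)$\Rightarrow$(1) is the same contrapositive argument via the expander clause of \Cref{lem:mu-hyperfinite-equiv} and iterated $s$-neighborhoods of a widely separated family. The paper takes separation exactly $2ns$ instead of a floor, and phrases the restriction to $X'$ as $\si^X_\mu \geq \si^{X'}_\mu$.

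One small caution: the result of Weilacher being cited is the measure-theoretic equivalence, so don't paraphrase it as giving a Borel cover of a fixed hyperfinite $X'$ for every $s$. It suffices that for each $s$ there is such a cover off an invariant null set, possibly depending on $s$, which is exactly what the definition of $\si^X_\mu$ allows.
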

\begin{proof}
Weilacher \cite{Weil} has shown that (1) implies (2), and
(2) implies (3) since a bounded function has subexponential growth. Hence it suffices to
prove that (3) implies (1), which we do by contraposition. 

  Suppose $E_\rho$ is not $\mu$-hyperfinite. By
  \Cref{lem:mu-hyperfinite-equiv}, there is a Borel subset $X'\subset X$ of
  positive measure and $s,\ve>0$ such that $X'$ is an $(s,\ve)$-expander.
  Clearly $\si_\mu^X(s) \geq \si_\mu^{X'}(s)$ by restricting covers of $X$ to $X'$,
  so it suffices to prove that $X'$ has an exponential lower bound on its separation index growth. 

  Suppose $\calA \subset \powset(X')$ is a family of finite sets that is
  $2ns$-separated. 
  Then since $B^{X'}_{is}(\calA)$ has
  separation greater than $2(n-i)s \geq 2s$ for every $i < n$ and $X'$ is an $(s,\ve)$-expander, we have
  $\mu(\supp(B^{X'}_{(i+1)s}(\calA))) \geq (1 + \ve) \mu(\supp(B^{X'}_{is}(\calA)))$,
  and so $\mu(\supp(B^{X'}_{ns}(\calA))) \geq (1 + \ve)^n
  \mu(\supp \calA)$. Hence $\mu(\supp \calA) \leq \frac{\mu(X')}{(1 +
  \ve)^n}$. Thus, if we cover a conull subset of $X'$ by families of finite sets of separation
  greater than $2ns$, since the total measure of the supports of all the
  families must be at least $\mu(X')$, we must have at least $(1 + \ve)^n$
  families of sets. So $\si_\mu^{X'}(2ns) \geq (1 + \ve)^n-1$ for all integers
  $n$. So $\si_\mu^{X'}(r) > b^r$ for any $1 < b < (1 + \ve)^{1/(2s)}$ for sufficiently large real numbers $r > 0$.
\end{proof}

Note in particular since $\sdim^X_B(s) \geq \si^X_B(s) \geq \si^X_\mu(s)$ for
any Borel probability measure $\mu$, this means that for any locally
finite Borel extended metric $(X,\rho)$ with subexponential Borel dimension
growth, $E_\rho$ is $\mu$-hyperfinite for every Borel probability measure
$\mu$ on $X$. So if $(X,\rho)$ has subexponential Borel dimension growth, but $E_\rho$ is
\emph{not} hyperfinite, then it would solve the hyperfiniteness vs measure
hyperfiniteness problem in the negative \cite[Problem 7.29]{K25}. Stated another way, all
locally finite Borel extended metric spaces $(X,\rho)$ that are known to be
non-hyperfinite have exponential dimension growth. Hence, there is currently a
large gap between these exponential dimension growths in all known
non-hyperfinite spaces, and the dimension growths
which are known to imply hyperfiniteness by Theorem C. 

\section{Borel subexponential dimension growth conjectures}
\label{sec:Borel-dimension}

We begin by restating our conjectures on Borel dimension which would imply a
positive solution to Weiss's question. 

\begin{conjF}
  \mbox{ }
  \begin{enumerate}
    \item 
    Suppose $\Gamma \actson X$ is a Borel action of a finitely
    generated amenable group $\Gamma$ %with finite symmetric generating set $S$
    on a standard Borel space $X$, and $\rho$ %$\rho_S$
    is the word metric on this action.
    Then $(X,\rho)$ has subexponential Borel dimension growth.
  \item If $(X,\rho)$ is a locally finite Borel extended metric space which 
  has subexponential Borel dimension growth, then $E_\rho$ is hyperfinite. 
  \end{enumerate}
\end{conjF}

These conjectures imply a positive answer to Weiss's question, even though part
(1) of the conjecture is only about finitely generated groups. This follows from
a standard way of combining a decreasing sequence of locally finite metrics into
a single locally finite metric.

\begin{prop}\label{conjF-implies-Weiss}
  If Conjecture F is true, then Weiss's question has a positive answer.
\end{prop}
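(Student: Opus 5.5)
The plan is to reduce an arbitrary countable amenable group $\Gamma$ to its finitely generated subgroups, and to package the whole action into a single locally finite Borel extended metric. Part (1) of Conjecture F will control the dimension growth of that metric one scale window at a time, and then part (2) gives hyperfiniteness. Fix a Borel action $\Gamma \actson X$ and write $\Gamma = \bigunion_n \Gamma_n$, where $\Gamma_n$ is generated by a finite symmetric set $S_n$ with $S_0 \subset S_1 \subset \ldots$. Each $\Gamma_n$ is a finitely generated amenable group, being a subgroup of an amenable group. Let $\rho_{S_n}$ be the word metric of the restricted action $\Gamma_n \actson X$. By part (1), each $(X,\rho_{S_n})$ has subexponential Borel dimension growth. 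So for each $n$ I can choose $L_n$ with $\sdim_B^{\rho_{S_n}}(s) \leq (1+1/n)^s$ for all $s \geq L_n$. I will also arrange $1 \leq L_0 < L_1 < \ldots$ with $L_n \to \infty$.

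Next I define the weighted word metric $\rho$. A generator $g \in S_n \setminus S_{n-1}$ gets length $L_n$, and $\rho(x,y)$ is the least total length of a word in these generators taking $x$ to $y$ ($\infty$ if there is none). This is a Borel extended metric whose finite distance relation $E_\rho$ is exactly the orbit equivalence relation of $\Gamma$. It is locally finite: a path of length at most $r$ uses only generators with $L_n \leq r$, of which there are finitely many, and it has at most $r$ steps since every length is at least $1$.

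The key comparison is for $s \in [L_n, L_{n+1})$, and it goes in both directions.
\begin{itemize}
\item Separation: if $\rho(x,y) \leq s$, the witnessing path uses only generators from $S_n$ and has at most $s$ steps, so $\rho_{S_n}(x,y) \leq s$. Hence any family that is $s$-separated for $\rho_{S_n}$ is $s$-separated for $\rho$.
\item Diameter: $\rho \leq L_n \rho_{S_n}$, so a set of $\rho_{S_n}$-diameter at most $d$ has $\rho$-diameter at most $L_n d$.
\end{itemize}
Therefore a Borel $(s,d)$-cover for $\rho_{S_n}$ is a Borel $(s, L_n d)$-cover for $\rho$, giving
\[\sdim_B^{\rho}(s) \leq \sdim_B^{\rho_{S_n}}(s) \leq (1+1/n)^s \quad \text{for } s \in [L_n, L_{n+1}).\]
Given any $b > 1$, pick $N$ with $1 + 1/N < b$. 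Then $\sdim_B^\rho(s) < b^s$ for all $s \geq L_N$, so $(X,\rho)$ has subexponential Borel dimension growth. Part (2) then makes $E_\rho$, the orbit equivalence relation, hyperfinite.

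The main obstacle is the diagonal issue. Part (1) gives subexponential growth separately for each $\Gamma_n$, with no uniformity in $n$, so the naive choice of weights (say, length $n$ for $S_n \setminus S_{n-1}$) would only bound $\sdim^\rho_B(s)$ by the growth of $\Gamma_s$ at scale $s$, and that need not be subexponential. The remedy is to choose the weights $L_n$ adaptively, as the thresholds past which $\Gamma_n$'s growth is already below $(1+1/n)^s$, so that each scale window $[L_n, L_{n+1})$ sees only $S_n$.
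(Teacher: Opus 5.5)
Your proof is correct and follows the paper's proof. The paper likewise merges the $\rho_n$ into one locally finite metric, $\rho_f(x,x') = \inf_n \max(f(n),\rho_n(x,x'))$, with a diagonally chosen fast-growing $f$, so that separation on each scale window $[f(n),f(n+1))$ is governed by $\rho_n$ and diameters are only boundedly distorted, and then applies F(2). Your weighted word metric (weight $L_n$ on $S_n\setminus S_{n-1}$) is just a different concrete realization of this: it gives multiplicative distortion $\rho\le L_n\rho_{S_n}$ in place of the paper's $\max$, which is harmless since $\sdim$ lets $d\to\infty$, and the only fix needed is reindexing the undefined bound $(1+1/n)^s$ at $n=0$.
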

\begin{proof}
Suppose $\Gamma \actson X$ is a Borel action of a
countable amenable group on a standard Borel space. We need to show the action
is hyperfinite. Let $\Gamma = \bigunion_n
S_n$ where $S_0 \subset S_1 \subset \ldots$ are an increasing sequence of finite
symmetric subsets of $\Gamma$, and let $\rho_n$ be the word metric arising from
the restriction of this action to the finitely generated subgroup $\langle S_n\rangle$
generated by $S_n$, so $\rho_0 \geq \rho_1 \geq \ldots$. Then by Conjecture
F.(1) each $\rho_n$ has subexponential dimension growth. 

Now let $f \colon \N \to \N$ be a
sufficiently fast growing increasing function with $f(0)=0$ and $f(n)\to
\infty$, and define 
\[\rho_f(x,x') \coloneqq \inf_n \max(f(n),\rho_n(x,x')).\]
So $\rho_f(x,x') = \max(f(n),\rho_n(x,x'))$ where $n$ is least such that
$\rho_n(x,x') < f(n+1)$.
To see that $\rho_f$ satisfies the triangle inequality, suppose
$\rho_f(x_0,x_1),\rho_f(x_1,x_2)<\infty$, choose $n_0,n_1$ realizing the
corresponding infima, and let $n=\max(n_0,n_1)$. Then
$\rho_n(x_0,x_1)\leq\rho_f(x_0,x_1)$,
$\rho_n(x_1,x_2)\leq\rho_f(x_1,x_2)$, and
$f(n)\leq\max(\rho_f(x_0,x_1),\rho_f(x_1,x_2))$. Hence
\[
\rho_f(x_0,x_2)\leq\max(f(n),\rho_n(x_0,x_2))
\leq\rho_f(x_0,x_1)+\rho_f(x_1,x_2).
\]
So $\rho_f$ is an extended metric, and it has uniformly bounded volume growth. 

By definition of $\rho_f$, and since the metrics $\rho_n$ are decreasing, 
\begin{enumerate}
\item if $\rho_n(x,x') \geq f(n)$, then $\rho_f(x,x') \leq
\rho_n(x,x')$, and 
\item if $\rho_n(x,x') < f(n+1)$, then $\rho_f(x,x') \geq \rho_n(x,x')$.
\end{enumerate}
So if $\calU_0, \ldots, \calU_l$ is an $(s,d)$-cover
of $(X,\rho_n)$, where $s < f(n+1)$ and $d \geq f(n)$, then these sets $\calU_i$
have diameter bounded by $d$ by (1) above, and are $s$-separated by (2). 
Hence $\dim_B^{\rho_f}(s,d) \leq \dim_B^{\rho_n}(s,d)$ for all $s < f(n+1)$ and
$d \geq f(n)$. Hence $\sdim_B^{\rho_f}(s) \leq \sdim_B^{\rho_n}(s)$ for all $s <
f(n+1)$. Thus, since $f$ is sufficiently fast growing and $\sdim_B^{\rho_n}(s)$
has subexponential growth for every $n$,
$\sdim_B^{\rho_f}$ has subexponential Borel dimension growth.

Hence by Conjecture F.(2), the orbit equivalence relation of the action $\Gamma
\actson X$ is hyperfinite, since it is equal to $E_{\rho_f}$.
\end{proof}

\subsection{Borel F{\o}lner property A}

\label{subsec:FolnerA-conjecture}

One might hope that a stronger version of part (1) of Conjecture F is true for
all Borel extended metric spaces of uniformly bounded volume growth satisfying a
suitable amenability property. We make a more speculative conjecture that this
is true for all such spaces satisfying a Borel version of Elek and Tim\'ar's
F{\o}lner property A \cite{ET25}. We recall the definition of this property:

\begin{defn}
Say that an extended metric space $(X,\rho)$ of uniformly bounded volume
growth has \define{F{\o}lner property A} if for every
$r, \ve > 0$, there is an $R > 0$ and a
function $\Theta \colon X \times X \to [0,1]$ such that 
\begin{enumerate}
\item For every $x \in X$, $y \notin B_R(x)$ implies $\Theta(x,y) = 0$, and 
$\sum_{y \in X} \Theta(x,y) = 1$.
\item If $\rho(x,x') \leq r$, then $\sum_{y \in X} |\Theta(x,y) - \Theta(x',y)| < \ve$.
\item For all $x \in X$, $\sum_{y \in X} \sum_{z \in B_r(y)} |\Theta(x,y) - \Theta(x,z)| <
\ve$.
\end{enumerate}
If $X$ is a locally finite Borel extended metric space, we say that $X$ has
\define{Borel F{\o}lner property A} if there are Borel functions $\Theta$ as above for
each $r, \ve > 0$.
\end{defn}

Note items (1) and (2) above just give the usual definition of property A. That
is, defining $\Theta_x(y) = \Theta(x,y)$, we have that $\Theta_x$ is a
probability measure supported on $B_R(x)$, and if $x$, $x'$ are distance at most
$r$, then the distance between $\Theta_x$ and $\Theta_{x'}$ is less than $\ve$ in
the $1$-norm. Item (3) is the additional F{\o}lnerness condition: 
the scale-$r$ $\ell^1$ gradient of $\Theta$: $\mathcal{E}_r(\Theta_x) = \sum_{y \in X} \sum_{z \in B_r(y)} |\Theta_x(y) -
\Theta_x(z)|$ is at most $\ve$, so
the values of this probability measure $\Theta_x$
do not change much on average when moving distance at most $r$ in the space. 

The word metric associated to any Borel action of a countable amenable group
has F{\o}lner property A \cite[Proposition 7.4]{ET25}. That is, suppose $\Gamma
\actson X$ is a Borel action
of a countable amenable group with finite symmetric generating set $S$. Let
$\rho$ be the word metric where $\rho(x,x')$ is the least $n$ so that there
exists $s_1, \ldots, s_n \in S$ such that $s_1 s_2\cdots s_n \cdot x = x'$. Then
$(X,\rho)$ has Borel F{\o}lner property A. To see this,
given $\ve, r > 0$, using the existence of F{\o}lner
sets in $\Gamma$, let $K = \{s_1 \cdots s_k \colon k \leq r \land s_i \in S\}$
be all group elements of word length at most $r$, and let $F \subset \Gamma$ be a
two-sided F{\o}lner set such that
$\frac{1}{|F|}\sum_{\gamma \in K}
  \left(|\gamma F \symdiff F| + |F\gamma \symdiff F|\right) < \ve$. 
Then defining $\Theta(x,y) = \frac{|\{\gamma \in F \colon \gamma
\cdot x = y\}|}{|F|}$, it is easy to check that $\Theta$ is as required. 
\trivial{Write $\nu_A$ for the uniform probability measure on a nonempty
finite set $A \subseteq \Gamma$, and let $\pi_x(\gamma)=\gamma\cdot x$. Then
$\Theta_x=(\pi_x)_*\nu_F$. If
$R=\max\{1,\max_{\gamma\in F}|\gamma|_S\}$, then $\Theta_x$ is a probability measure
supported on $B_R(x)$, which proves (1). If $\rho(x,x')\leq r$, choose
$\delta\in K$ with $x'=\delta\cdot x$. Since
$\Theta_{x'}=(\pi_x)_*\nu_{F\delta}$ and pushforward does not increase
$\ell^1$-distance,
\[
  \sum_{y\in X}|\Theta(x,y)-\Theta(x',y)|
  \leq \frac{|F\mathbin{\triangle}F\delta|}{|F|}<\ve.
\]
This proves (2). Finally, every $z\in B_r(y)$ equals $\delta\cdot y$ for some
$\delta\in K$, and the function $y\mapsto\Theta_x(\delta\cdot y)$ is
$(\pi_x)_*\nu_{\delta^{-1}F}$. Since $K=K^{-1}$,
\[
  \sum_{y\in X}\sum_{z\in B_r(y)}|\Theta(x,y)-\Theta(x,z)|
  \leq \frac{1}{|F|}\sum_{\delta\in K}
  |F\mathbin{\triangle}\delta^{-1}F|<\ve,
\]
which proves (3).}

Hence, the following conjecture generalizes part (1) of Conjecture F. By a graph
$G$ having Borel F{\o}lner property A or subexponential dimension growth below,
we mean that the path length metric on $G$ has these properties.

\begin{conj}\label{conj-graph-Folner-A}
Suppose $G$ is a bounded degree Borel graph on a standard Borel space $X$. Then
if $G$ has Borel F{\o}lner property A, then $G$ has subexponential Borel
dimension growth.
\end{conj}

We remark that a generalization of the above conjecture to all Borel extended
metric spaces of uniformly bounded volume growth is not true. For example, let
$(n_i)_{i \geq 1}$ be a sufficiently fast growing sequence of natural numbers,
and let $X_i$ be the metric space $\Z^{n_i}$ with the metric $\rho_i(g,h) = i \|
g-h\|_\infty$ of
$i$ multiplied by the usual $\ell_\infty$ metric. Then consider the disjoint
union $X = \bigsqcup_i X_i$ of all these spaces, with the metric where $\rho(x,x') =
\rho_i(x,x')$ if $x,x' \in X_i$ for some $i$, and $\rho(x,x') = \infty$
otherwise. Note that $X$ has uniformly bounded volume growth.
Since $\sdim^{\Z^n}(1) = n$, it follows that $\sdim^X(i) \geq n_i$, so the
dimension growth of $X$ can be arbitrarily fast (and hence not subexponential), but it is easy to show that this
space has F{\o}lner property $A$. Since
these spaces are countable, the Borel and classical dimension agree.

This counterexample reflects the fact that the right setting here is perhaps that
of Borel coarse geometry: given a metric $\rho$, the key object to analyze is
really the metric generated by walks of steps of length at most $s$. That is,
the associated metric $\rho_s$ where $\rho_s(x,x')$ is the infimum length of a
$\rho$-path from $x$ to $x'$ where each step in the path has distance at most
$s$. (Or in the language of coarse geometry, the relations of having distance at
most $s$ for each $s$ generate a natural collection of the
entourages for $\rho$). 
The correct metric generalization of \Cref{conj-graph-Folner-A} is that if $\rho$ is a
Borel extended metric with uniformly bounded volume growth and Borel F{\o}lner
property A, then the metric $\rho_s$ has subexponential Borel dimension growth
for every $s$. Likewise, a construction similar to that in \ref{conjF-implies-Weiss} shows
that if the entourages of $\rho$ all have subexponential dimension growth, then
one can similarly make a stretched version $\rho_f$ of the metric keeping the
same connectedness relation so that $\rho_f$ has subexponential Borel dimension
growth.

\printbibliography

\end{document}